\newtheorem{proposition}{Proposition}[section]
\newtheorem{corollary}{Corollary}[section] 
\newtheorem{lemma}[proposition]{Lemma}
\theoremstyle{remark}
\newtheorem{remark}{Remark}
\theoremstyle{remark}
\newcommand{\R}{{\mathbb R}}
\renewcommand\d{\mathrm{d}}
\newcommand\nvec{\mathbf{n}}
\newcommand\vvec{\mathbf{v}}
\newcommand\rvec{\mathbf{r}}
\newcommand\xhat{\hat{\mathbf{x}}}
\newcommand\yhat{\hat{\mathbf{y}}}
\newcommand\zhat{\hat{\mathbf{z}}}
\newcommand\Qvec{\mathbf{Q}}
\newcommand\Gvec{\mathbf{H}}
\newcommand\Pvec{\mathbf{P}}
\newcommand\Ivec{\mathbf{I}}
\newcommand\pp{\partial}
\newcommand\tr{\mathrm{tr}}
\newcommand{\abs}[1]{\left|#1\right|}
\begin{document}

\begin{frontmatter}

%% Title, authors and addresses

%% use the tnoteref command within \title for footnotes;
%% use the tnotetext command for theassociated footnote;
%% use the fnref command within \author or \address for footnotes;
%% use the fntext command for theassociated footnote;
%% use the corref command within \author for corresponding author footnotes;
%% use the cortext command for theassociated footnote;
%% use the ead command for the email address,
%% and the form \ead[url] for the home page:
%% \title{Title\tnoteref{label1}}
%% \tnotetext[label1]{}
%% \author{Name\corref{cor1}\fnref{label2}}
%% \ead{email address}
%% \ead[url]{home page}
%% \fntext[label2]{}
%% \cortext[cor1]{}
%% \address{Address\fnref{label3}}
%% \fntext[label3]{}

\title{The Well Order Reconstruction Solution for Three-Dimensional Wells, in the Landau-de Gennes theory.}

\author{Giacomo Canevari}
\ead{gcanevari@bcamath.org}
\address{Basque Center for Applied Mathematics, Alameda de Mazarredo 14, 48009 Bilbao, Spain}

\author{Joseph Harris}
\ead{jh981@bath.ac.uk}
\address{Mathematical Sciences, University of Bath, Claverton Down, Bath, BA2 7A9, United Kingdom}

\author{Apala Majumdar}
\ead{a.majumdar@bath.ac.uk}
\address{Mathematical Sciences, University of Bath, Claverton Down, Bath, BA2 7A9, United Kingdom}

\author{Yiwei Wang}
\ead{ywang487@iit.edu}
\address{Department of Applied Mathematics, Illinois Institute of Technology, Chicago, IL 60616, USA}

%% use optional labels to link authors explicitly to addresses:

\begin{abstract}
We study nematic equilibria on three-dimensional square wells, with emphasis on Well Order Reconstruction Solutions (WORS) as a function of the well size, characterized by $\lambda$, and the well height denoted by $\epsilon$. The WORS are distinctive equilibria reported in \cite{kralj2014order} for square domains, without taking the third dimension into account, which have two mutually perpendicular defect lines running along the square diagonals, intersecting at the square centre. We prove the existence of WORS on three-dimensional wells for arbitrary well heights, with (i) natural boundary conditions and (ii) realistic surface energies on the top and bottom well surfaces, along with Dirichlet conditions on the lateral surfaces. Moreover, the WORS is globally stable for $\lambda$ small enough in both cases and unstable as $\lambda$ increases. We numerically compute novel mixed 3D solutions for large $\lambda$ and $\epsilon$ followed by a numerical investigation of the effects of surface anchoring on the WORS, exemplifying the relevance of the WORS solution in a 3D context. 
\end{abstract} 

\begin{keyword}
%% keywords here, in the form: keyword \sep keyword

%% PACS codes here, in the form: \PACS code \sep code

%% MSC codes here, in the form: \MSC code \sep code
%% or \MSC[2008] code \sep code (2000 is the default)

\end{keyword}

\end{frontmatter}

%% \linenumbers

%% main text

\section{Introduction}
Nematic liquid crystals (NLCs) are classical examples of mesophases or intermediate phases of matter between the solid and liquid phases, with a degree of long-range orientational order \cite{dg}. Nematics are directional materials with locally preferred directions of molecular alignment, described as nematic ``directors'' in the literature. The directional nature of nematics makes them highly susceptible to external light and electric fields, making them the working material of choice for the multi-billion dollar liquid crystal display industry \cite{bahadur1984liquid}. Recently, there has been substantial interest in new applications for NLCs in nano-technology, microfluidics, photonics and even security applications \cite{lagerwall2012new}. We build on a batch of papers on NLCs in square wells, originally reported in \cite{tsakonas2007multistable} and followed up in recent years in \cite{luo2012multistability}, \cite{kralj2014order}, \cite{kusumaatmaja2015free}, \cite{lewis2014colloidal}, \cite{Walton2018}, \cite{canevari2017order}, \cite{wang2018order} etc. In \cite{tsakonas2007multistable}, the authors experimentally and numerically study NLC equilibria inside square wells with tangent boundary conditions on lateral surfaces, which means that the nematic molecules on these surfaces preferentially lie in the plane of the surfaces. They study shallow wells and argue that it is sufficient to study the nematic profile on the square cross-section and hence, model NLC equilibria on a square with tangent boundary conditions, which require the nematic directors to be tangent to the square edges creating a necessary mismatch at the square corners. They report two experimentally observed NLC equilibria on micron-sized wells, labelled as the \emph{diagonal} solution for which the nematic director is along a square diagonal and a \emph{rotated} solution for which the nematic director rotated by $\pi$ radians between a pair of parallel edges. They further model this system within a reduced two-dimensional continuum Landau-de Gennes (LdG) approach and recover the diagonal and rotated solutions numerically. The reduction from a 3D well to a 2D square domain can be rigorously justified using $\Gamma$-convergence \cite{wang2018order}.

In \cite{kralj2014order}, the authors study the effects of square size on NLC equilibria for this model problem with tangent boundary conditions. They measure square size in units of a material-dependent length scale - the biaxial correlation length, which is typically in the nanometer regime. For micron-sized squares, the authors recover the diagonal and rotated solutions within a continuum LdG approach as before. As they reduce the square size, particularly from the micron to the nano-scale, they find a unique Well Order Reconstruction Solution (WORS) for squares smaller than a certain critical size, which in turn depends on the material constants and temperature. The WORS is an interesting NLC equilibria for two reasons - (i) it partitions the square into four quadrants and the nematic director is approximately constant in each quadrant according to the tangent condition on the corresponding edge and (ii) the WORS has a defect line along each square diagonal and the two mutually perpendicular defect lines intersect at the square centre, yielding the quadrant structure. Indeed, we speculate that this distinctive defect line could be a special optical feature of the WORS, if experimentally realised. The WORS has been analysed in \cite{canevari2017order} and \cite{wang2018order}, in terms of solutions of the Allen-Cahn equation and it is rigorously proven that the WORS is globally stable for sufficiently small squares i.e. for nano-scale geometries. Recent work shows that the WORS is also observable in molecular simulations and is hence, not a continuum artefact.

A potential criticism is that the WORS is an artefact of the 2D square domain and is hence, not relevant for 3D scenarios. In this paper, we address the important question - does the WORS survive in a three-dimensional square box? As proven in \cite{wang2018order}, the WORS does survive in the \emph{thin film limit} but can we observe the WORS for square wells with a finite height? The answer is affirmative and we identify two physically relevant 3D scenarios for which the WORS exists, for all values of the well height and for all temperatures below the nematic supercooling temperature i.e. for temperatures that favour a bulk ordered nematic phase. The paper is organised as follows. In Section~\ref{sub:ldg}, we review the LdG theory for NLCs and introduce the domain and the boundary conditions in Section~\ref{sect:Omega}. Our analytical results are restricted to Dirichlet tangent conditions for the nematic directors on the lateral surfaces of the well, phrased in the LdG framework. In Section~\ref{sect:natural}, we work with 3D wells that have natural boundary conditions on the top and bottom surfaces and study the existence, stability and qualitative properties of the WORS as a special case of a more general family of LdG equilibria; we believe these results to be of general interest. In Section~\ref{sect:surface}, we work with 3D wells that have realistic surface energies that favour planar boundary conditions on the top and bottom and again prove the existence of the WORS for arbitrary well heights and low temperatures, accompanied by interesting companion results for surface energy. In Section~\ref{sec:numerics}, we perform a detailed numerical study of the 3D LdG model on 3D square wells. We discover novel mixed 3D solutions, that interpolate between different diagonal solutions, when the WORS is unstable. Further, we numerically study the effect of surface anchoring on the lateral surfaces on the stability of the WORS, in contrast to the analysis which is restricted to Dirichlet conditions on the lateral surfaces. The WORS ceases to exist as we weaken the tangential boundary conditions on the lateral surfaces; this is expected from \cite{kralj2014order} since the tangent conditions naturally induce the symmetry of the WORS in severe confinement. Our numerical results yield quantitative estimates for the existence of the WORS as a function of the anchoring strength on the lateral surfaces and these estimates can be of value in further work. We summarise our conclusions in Section~\ref{sec:summary}.

\section{Preliminaries}

\subsection{The Landau-de Gennes model}
\label{sub:ldg}

We work with the Landau-de Gennes (LdG) theory for nematic liquid crystals.
The LdG theory is a powerful continuum theory for nematic liquid crystals
and describes the nematic state by a macroscopic order parameter  --- 
the LdG $\Qvec$-tensor, which is a symmetric traceless $3\times 3$ matrix i.e. 
\[
 \Qvec\in S_0 := \left\{ \Qvec\in \mathbb{M}^{3\times 3}\colon
 Q_{ij} = Q_{ji}, \ Q_{ii} = 0 \right\}.
\]
A $\Qvec$-tensor is said to be (i) isotropic if~$\Qvec=0$, (ii) uniaxial 
if $\Qvec$ has a pair of degenerate non-zero eigenvalues and can be written in the form
\[
\Qvec = s\left(\nvec \otimes \nvec - \frac{\mathbf{I}}{3} \right)
\]
where $\nvec$ is the eigenvector with the non-degenerate eigenvalue  and 
(iii) biaxial if~$\Qvec$ has three distinct eigenvalues.

We assume that the domain is a three-dimensional well, filled with nematic liquid crystals, 
\[
 V := \Omega\times (0, \, h),
\]
where $\Omega\subseteq\R^2$ is the two-dimensional cross-section of the well
(more precisely, a truncated square, as described in 
Section~\ref{sect:Omega}) and~$h$ is the well height
\cite{tsakonas2007multistable, kralj2014order}. Let~$\Gamma$
be the union of the top and bottom plates, that is,
\[
 \Gamma := \Omega\times\{0, \, h\}.
\]

In the absence of surface anchoring energy, we work with a simple form of the LdG energy given by~\cite{dg}
\begin{equation} \label{eq:dim-energy} 
 \mathcal{F}_\lambda[\Qvec] :=  \int_{V} \left(\frac{L}{2} \left|\nabla\Qvec \right|^2
 + f_b(\Qvec) \right) \d V
\end{equation}
The term~$\abs{\nabla\Qvec}^2 := \partial_k Q_{ij} \partial_k Q_{ij}$,
for $i$, $j$, $k = 1, \, 2, \, 3$, is an elastic energy density
which penalises spatial inhomogeneities and $L>0$
is a material-dependent elastic constant.
The thermotropic bulk potential, $f_b$, is given by
\begin{equation} \label{eq:3}
%  |\nabla \Qvec |^2 := \frac{\partial Q_{ij}}{\partial r_k}\frac{\partial Q_{ij}}{\partial r_k},
%  \qquad
 f_b(\Qvec) := \frac{A}{2} \tr\Qvec^2 - \frac{B}{3} \tr\Qvec^3 + \frac{C}{4}\left(\tr\Qvec^2 \right)^2.
\end{equation}
The variable~$A = \alpha (T - T^*)$ is the re-scaled temperature, 
$\alpha$, $B$, $C>0$ are material-dependent constants
and~$T^*$ is the characteristic supercooling 
temperature~\cite{dg,newtonmottram}.
% Further~$\rvec:=(x, \, y, \, z)$, $\tr\Qvec^2 = Q_{ij}Q_{ij}$ and
% $\tr\Qvec^3 = Q_{ij} Q_{jk}Q_{ki}$ for $i$, $j$, $k = 1, \, 2, \, 3$.
It is well-known that all stationary points of $f_b$ are either uniaxial or isotropic~\cite{dg,newtonmottram,ejam2010}.
The re-scaled temperature~$A$ has three characteristic values: 
(i)~$A=0$, below which the isotropic phase $\Qvec=0$ loses stability,
%referred to as the supercooling temperature, 
(ii) the nematic-isotropic transition temperature, 
$A={B^2}/{27 C}$, at which $f_b$ is minimized by the isotropic
phase and a continuum of uniaxial states with $s=s_+ ={B}/{3C}$ and
$\nvec$ arbitrary, and (iii) the nematic superheating temperature, 
$A = {B^2}/{24 C}$, above which the isotropic state is the
unique critical point of $f_b$.
% We work with $A<0$ i.e. low temperatures.
% and a large part of the paper focuses on a special temperature,
% $A=-{B^2}/{3C}$, largely because this choice of $A$ facilitates the subsequent analysis. 
% Some of our results can be readily generalized to all temperatures, $A<0$.
For a given $A<0$, let
$\mathcal{N} := \left\{ \Qvec \in S_0\colon 
\Qvec = s_+ \left(\nvec\otimes \nvec - \Ivec/3 \right) \right\}$ 
denote the set of minimizers of the bulk potential, $f_b$, with
\begin{equation} \label{eq:s+}
 s_+ := \frac{B + \sqrt{B^2 + 24|A| C}}{4C}
\end{equation}
and~$\nvec \in S^2$ arbitrary. 
% In particular, this set is relevant to our choice of Dirichlet 
% conditions for boundary-value problems.

We non-dimensionalize the system using a change of variables, 
$\bar{\rvec} = \rvec/ \lambda$,
where $\lambda$ is a characteristic length scale of the cross-section $\Omega$.
The rescaled domain and the rescaled top and bottom surfaces become
\begin{equation}
\overline{V} := \overline{\Omega} \times (0, \, \epsilon),
\qquad \overline{\Gamma} := \overline{\Omega} \times \{0, \, \epsilon\}
\end{equation}
where $\overline{\Omega}$ is the rescaled two-dimensional
domain and~$\epsilon := h / \lambda$. 
The re-scaled LdG energy functional is 
\begin{equation} \label{eq:LdG}
 \overline{\mathcal{F}_\lambda}[\Qvec] := \frac{\mathcal{F}_\lambda[\Qvec]}{L \lambda} =
 \int_{\overline{V}} \left(\frac{1}{2}\left| \overline{\nabla} \Qvec \right|^2 
 + \frac{\lambda^2}{L} f_b(\Qvec) \right) \, \overline{\d V}.
 % + \frac{\lambda}{L} \int_{\overline{\Gamma}} f_s(\Qvec) \, \overline{\d S}.
\end{equation}
In~\eqref{eq:LdG}, %$\overline{\Omega}$ is the re-scaled domain,
$\overline{\nabla}$ is the gradient with respect to
the re-scaled spatial coordinates, $\overline{\d V}$ 
is the re-scaled volume element and~$\overline{\d S}$ is the re-scaled area element.
In what follows, we drop the \emph{bars} and all statements 
are to be understood in terms of the re-scaled variables.

Critical points of ~\eqref{eq:LdG} satisfy the
Euler-Lagrange system of partial differential equations 
\begin{equation} \label{eq:EL}
 \Delta \Qvec = \frac{\lambda^2}{L} 
 \left\{ A\Qvec - B\left(\Qvec\Qvec - \frac{\Ivec}{3}|\Qvec|^2 \right)
 + C|\Qvec|^2 \Qvec \right\} \! .
\end{equation}

\subsection{The 2D domain, $\Omega$,
and the boundary conditions on the lateral surfaces}
\label{sect:Omega}

Let $\Omega$ be our two-dimensional (2D) cross-section which 
is a truncated square with diagonals along the coordinate axes:
\begin{equation} \label{Omega}
 \Omega := \left\{(x, \, y)\in\R^2\colon
 |x| < 1 - \eta, \ |y| < 1 - \eta, 
 \ |x+y| < 1, \ |x-y| < 1 \right\}
\end{equation}
(see Figure~\ref{fig:square}).
Here~$\eta\in (0, \, 1)$ is a small, but fixed parameter.
The boundary, $\partial\Omega$, consists of four ``long'' edges~$C_1$,\ldots, $C_4$,
parallel to the lines ~$y = x$ and~$y = -x$,
and four ``short'' edges~$S_1, \, \ldots, \, S_4$, of length~$2\eta$, 
parallel to the $x$ and $y$-axes respectively.
The four long edges~$C_i$ are labeled counterclockwise and $C_1$ 
is the edge contained in the first quadrant, i.e.
\[
 C_1 := \left\{(x, \, y)\in\R^2\colon x + y = 1, \ \eta \leq x \leq 1 - \eta \right\} \! .
\]
The short edges~$S_i$ are introduced to remove the sharp square vertices.
They are also labeled counterclockwise and
$S_1 := \{(1 - \eta, \, y)\in\R^2\colon |y|\leq \eta\}$.

\begin{figure}[t]
	\centering
 	\includegraphics[height=5cm]{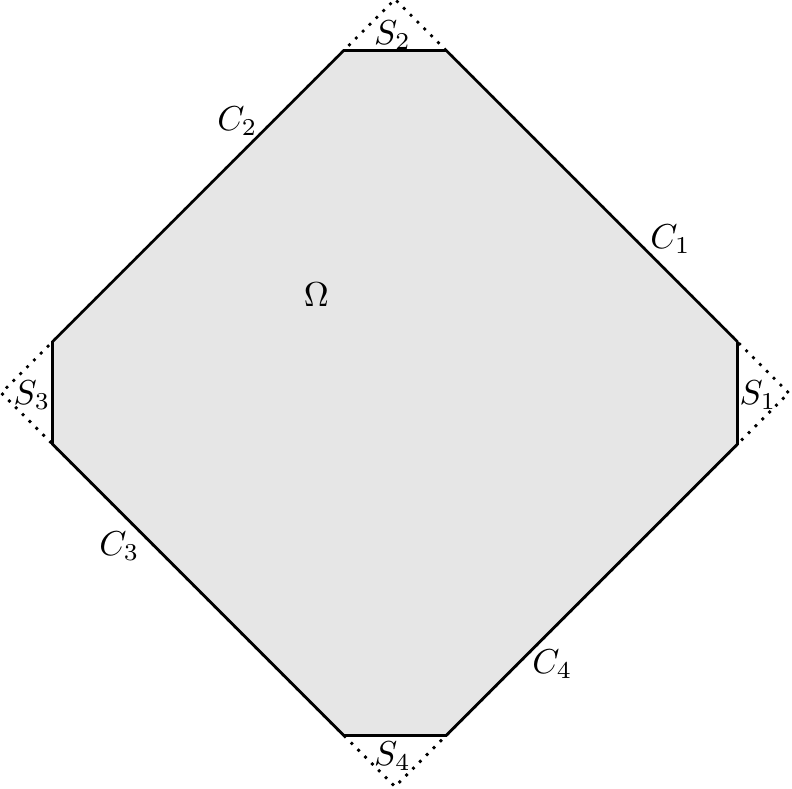} 
	\caption{The `truncated square'~$\Omega$. }
	\label{fig:square}
\end{figure}

We impose Dirichlet conditions
on the lateral surfaces of the well,
$\partial\Omega\times (0, \, \epsilon)$:
\begin{equation} \label{eq:bc-Dir}
 \Qvec = \Qvec_{\mathrm{b}} \quad \textrm{on } 
 \partial V\setminus\Gamma = \partial\Omega\times (0, \, \epsilon),
\end{equation}
where the boundary datum~$\Qvec_{\mathrm{b}}$ is independent of the $z$-variable,
$\partial_z\Qvec_{\mathrm{b}}\equiv 0$.
Following the literature on planar multistable
nematic systems~\cite{tsakonas2007multistable, luo2012multistability, kralj2014order}, we impose 
\emph{tangent} uniaxial Dirichlet conditions on the long edges, $C_1, \, \ldots, \, C_4$:
% These tangent conditions simply require the uniaxial director to be tangent to
% the long edges and we fix $\Qvec = \Qvec_{\mathrm{b}} \in\mathcal{N}$ 
% on $C_1, \, \ldots, \, C_4$ where
\begin{equation} \label{eq:bc1}
 \Qvec_{\mathrm{b}}(x, \, y, \, z) := \begin{cases} 
  s_+\left( \nvec_1 \otimes \nvec_1 - \Ivec/3 \right)  & \textrm{if } (x, \, y)\in C_1 \cup C_3 \\
  s_+\left( \nvec_2 \otimes \nvec_2 - \Ivec/3 \right)  & \textrm{if } (x, \, y)\in C_2 \cup C_4;
 \end{cases}
\end{equation}
where $s_+$ is defined in (\ref{eq:s+})
%\begin{equation} \label{eq:s+}
% s_+ := \frac{B + \sqrt{B^2 + 24|A| C}}{4C}.
%\end{equation}
and
\begin{equation} \label{eq:n12}
 \nvec_1 := \frac{1}{\sqrt{2}}\left(-1, \, 1, \, 0 \right), 
 \qquad  \nvec_2 := \frac{1}{\sqrt{2}}\left(1, \, 1, \, 0 \right).
\end{equation}
We prescribe Dirichlet conditions on the short edges too, in terms of a function,
$g_0\colon [-\eta, \, \eta]\to [-s_+/2, s_+/2]$, 
chosen to eliminate discontinuities of the tangent Dirichlet boundary condition e.g.
\[
 g_0(s) := \frac{s_+}{2\eta} s, \qquad 
 \textrm{for } -\eta \leq s \leq\eta,
\]
but the choice of $g_0$ does not affect qualitative predictions or numerical results.
We define
\begin{equation} \label{eq:bc2}
 \Qvec_{\mathrm{b}}(x, \, y, \, z) := \begin{cases}
  g_0(y) \left(\nvec_1 \otimes \nvec_1 - \nvec_2\otimes \nvec_2 \right)
  - \dfrac{s_+}{6}\left(2 \zhat\otimes\zhat
  - \nvec_1\otimes \nvec_1 - \nvec_2\otimes \nvec_2 \right)  
  & \textrm{if } (x, \, y)\in S_1\cup S_3, \\
  g_0(x)\left(\nvec_1 \otimes \nvec_1 - \nvec_2\otimes \nvec_2 \right)
  - \dfrac{s_+}{6}\left(2 \zhat\otimes\zhat
  - \nvec_1\otimes \nvec_1 - \nvec_2\otimes \nvec_2 \right) 
  & \textrm{if } (x, \, y)\in S_2\cup S_4.
 \end{cases}
\end{equation}
Given the Dirichlet conditions~\eqref{eq:bc-Dir},
 our admissible class of~$\Qvec$-tensors is
\begin{equation} \label{eq:admissible}
 \mathcal{B} := \left\{ \Qvec \in W^{1,2}(V, \, S_0)\colon
 \Qvec = \Qvec_{\mathrm{b}} \ \textrm{on } 
 \partial\Omega\times(0, \, \epsilon) \right\}.
\end{equation}

\section{The Well Order Reconstruction Solution (WORS) in a three-dimensional (3D) context and related results}
\label{sec:Wors_3D}

In \cite{kralj2014order}, the authors numerically report the Well Order Reconstruction solution (WORS) on the 2D domain, $\Omega$, with the Dirichlet conditions (\ref{eq:bc1}); which is further analysed in \cite{canevari2017order}. At a fixed temperature $A = -\frac{B^2}{3C}$, the WORS corresponds to a classical solution of the Euler-Lagrange equations (\ref{eq:EL}) of the form
\begin{equation}
\label{eq:wors}
\Qvec_{WORS} = q \left( \nvec_1 \otimes \nvec_1 - \nvec_2 \otimes \nvec_2 \right) - \frac{B}{6C} \left(2 \zhat \otimes \zhat -\nvec_1 \otimes \nvec_1 -  \nvec_2\otimes \nvec_2 \right) 
\end{equation}
with a single degree of freedom, $q\colon\Omega\to \mathbb{R}$ which satisfies the Allen-Cahn equation and has the following symmetry properties
\[
q \left(x, 0 \right) =  q\left( 0, y \right) = 0,  \qquad  xy \, q(x,y) \geq 0.
\]
The WORS has a constant set of eigenvectors, unit-vectors $\nvec_1, \nvec_2$ 
defined by~\eqref{eq:n12} and the coordinate unit-vector $\zhat$, and very importantly, has two mutually perpendicular defect lines, along the square diagonals, intersecting at the square centre, described by the nodal lines of $q$ above. These are defect lines in the sense that $\Qvec_{WORS}$ is uniaxial along these diagonal lines with negative order parameter, which physically implies that the nematic molecules lie in the plane of the square without a preferred in-plane direction along the defect lines i.e. they are locally disordered in the square plane along the defect lines.
In \cite{canevari2017order}, the authors prove that the WORS is globally stable for $\lambda$ small enough and unstable for $\lambda$ large enough. The analysis in \cite{canevari2017order} is restricted to a special temperature but numerics show that the WORS exists for all $A<0$ with the diagonal defect lines, and the eigenvalue associated with  $\zhat$ is negative for all $A<0$. The negative eigenvalue (associated with $\zhat$) implies that nematic molecules lie in the $(x,y)$-plane and for non-zero $q$, there is a locally defined nematic director in the $(x,y)$-plane. In the next sections, we study the relevance of the WORS in 3D contexts i.e. does the WORS survive in 3D scenarios and what can be said about its qualitative properties?

\subsection{Natural boundary conditions on the top and bottom plates}
\label{sect:natural}

%\subsection{Reduction to a 2D problem}

In this section, we study a special class of LdG critical points, including the WORS, with natural boundary conditions
on the top and bottom plates.
% The free energy functional~\eqref{eq:energy} then becomes
% \begin{equation} \label{eq:LdG}
%  \mathcal{F}_\lambda[\Qvec] := 
%  \int_{V} \left(\frac{1}{2}\left|\nabla\Qvec \right|^2 
%  + \frac{\lambda^2}{L} f_b(\Qvec) \right) \d V.
% \end{equation}
Minimizers of~$\mathcal{F}_\lambda$ (see~\eqref{eq:LdG}),
in the admissible class~$\mathcal{B}$ in~\eqref{eq:admissible}
satisfy the Euler-Lagrange system~\eqref{eq:EL},
subject to the Dirichlet boundary conditions~\eqref{eq:bc-Dir}
along with \emph{natural} or Neumann boundary conditions
on the top and bottom plates i.e.
\begin{equation} \label{eq:bc-nat}
 \partial_z\Qvec = 0 \qquad \textrm{on } \Gamma = \Omega\times\{0, \, \epsilon\} .
\end{equation}
%This is a condition for criticality with respect to
%perturbations that are localised near the top 
%and bottom plates, $\Gamma$. 
Throughout this section,
we will treat~$L>0$, $B$, $C>0$ as fixed parameters,
while~$\lambda$ and~$A$ may vary.

\begin{proposition} \label{prop:2D}
  For any~$\lambda>0$ and~ $A < 0$,
 there exist minimizers~$\Qvec$ of ~$\mathcal{F}_\lambda$,
in~\eqref{eq:LdG}, in the admissible class~$\mathcal{B}$, 
 (see~\eqref{eq:admissible}). Moreover, minimizers are independent of the $z$-variable,
 that is~$\partial_z\Qvec=0$ on~$V$, and they minimize the 2D functional
 \begin{equation} \label{eq:2D-LdG}
  I[\Qvec] := \int_{\Omega} 
  \left(\frac{1}{2}\left|\nabla\Qvec \right|^2 
  + \frac{\lambda^2}{L} f_b(\Qvec) \right) \d S
 \end{equation}
 in the class
 \begin{equation} \label{eq:2D-admissible}
  \mathcal{B}^\prime := \left\{ \Qvec \in W^{1,2}(\Omega, \, S_0)\colon
  \Qvec = \Qvec_{\mathrm{b}} \ \textrm{on } \partial\Omega \right\} \! .
 \end{equation}
\end{proposition}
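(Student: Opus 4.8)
The plan is to establish existence by the direct method in the calculus of variations, and then to prove $z$-independence via a symmetrisation/reflection argument that shows the vertical slices can only increase the energy unless they coincide. First I would verify that $\mathcal{F}_\lambda$ is coercive and sequentially weakly lower semicontinuous on $\mathcal{B}$ in the $W^{1,2}(V,S_0)$ topology: coercivity follows from the elastic term $\tfrac12|\nabla\Qvec|^2$ together with the Dirichlet trace constraint (via a Poincar\'e-type inequality on $V$, using that $\Qvec_{\mathrm b}\in W^{1/2,2}$ is a fixed lateral datum), while the bulk term $f_b$ is bounded below (it is a quartic with positive leading coefficient, so $f_b(\Qvec)\geq -c$ for some constant $c$ depending on $A,B,C$) so it does not destroy coercivity; weak lower semicontinuity of the Dirichlet part is standard (convexity in $\nabla\Qvec$), and the bulk part passes to the limit by compact Sobolev embedding $W^{1,2}(V)\hookrightarrow L^p(V)$ for $p<6$, which suffices since $f_b$ has quartic growth. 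Hence a minimiser $\Qvec\in\mathcal{B}$ exists; it satisfies the Euler--Lagrange system \eqref{eq:EL} together with the natural boundary condition \eqref{eq:bc-nat} on $\Gamma$ by the usual first-variation computation, noting the admissible variations are unconstrained on $\Gamma$.

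The heart of the proposition is the claim that minimisers are $z$-independent. The natural approach is a slicing argument: write the energy in the form
\begin{equation}
\mathcal{F}_\lambda[\Qvec] = \int_0^\epsilon \left( \int_\Omega \left( \tfrac12|\nabla_{x,y}\Qvec|^2 + \tfrac{\lambda^2}{L} f_b(\Qvec) \right) \d S \right) \d z + \int_0^\epsilon \int_\Omega \tfrac12 |\partial_z \Qvec|^2 \, \d S \, \d z,
\end{equation}
and observe that the last term is nonnegative and vanishes precisely when $\partial_z\Qvec\equiv 0$. Given any $\Qvec\in\mathcal{B}$, pick a horizontal slice $z_0$ at which the 2D energy $I[\Qvec(\cdot,\cdot,z_0)]$ is no larger than its $z$-average (such $z_0$ exists by the mean value property of the integral, and the slices $\Qvec(\cdot,\cdot,z)$ lie in $\mathcal{B}^\prime$ for a.e.\ $z$ since $\Qvec_{\mathrm b}$ is $z$-independent); then the competitor $\widetilde{\Qvec}(x,y,z):=\Qvec(x,y,z_0)$ is admissible, has zero $\partial_z$-energy, and satisfies $\mathcal{F}_\lambda[\widetilde{\Qvec}]=\epsilon\, I[\Qvec(\cdot,\cdot,z_0)] \leq \int_0^\epsilon I[\Qvec(\cdot,\cdot,z)]\,\d z \leq \mathcal{F}_\lambda[\Qvec]$. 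Taking $\Qvec$ to be a minimiser forces equality throughout, which in turn forces $\partial_z\Qvec\equiv 0$; and then $\mathcal{F}_\lambda[\Qvec]=\epsilon\, I[\Qvec(\cdot,\cdot)]$ shows that the (now $z$-independent) minimiser minimises $I$ over $\mathcal{B}^\prime$. Conversely any minimiser of $I$ over $\mathcal{B}^\prime$, extended trivially in $z$, is a minimiser of $\mathcal{F}_\lambda$ over $\mathcal{B}$.

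The main obstacle I anticipate is the measure-theoretic care needed to make the slicing rigorous: one must justify that for $\Qvec\in W^{1,2}(V)$ the restriction $\Qvec(\cdot,\cdot,z)$ belongs to $W^{1,2}(\Omega)$ with the correct trace on $\partial\Omega$ for a.e.\ $z$, that $z\mapsto I[\Qvec(\cdot,\cdot,z)]$ is measurable and integrable with $\int_0^\epsilon I[\Qvec(\cdot,\cdot,z)]\,\d z \leq \mathcal{F}_\lambda[\Qvec]$, and that the ``good slice'' $z_0$ can indeed be chosen in the admissible class (e.g.\ by Fubini and the absolute continuity of $\Qvec$ along a.e.\ vertical line, after choosing a good representative). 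A secondary point is the equality-case analysis: strict positivity of $\int|\partial_z\Qvec|^2$ whenever $\partial_z\Qvec\not\equiv 0$ is immediate, so the rigidity is clean once the slicing inequality is in place. I would also remark that this $z$-independence is special to the natural/Neumann condition on $\Gamma$ and the $z$-independence of $\Qvec_{\mathrm b}$; it is exactly the mechanism that lets all later WORS results in the Dirichlet-lateral, Neumann-top-bottom setting be read off from the known 2D analysis of \cite{canevari2017order}.
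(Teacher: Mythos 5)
Your proposal is correct and follows essentially the same route as the paper, which proves this proposition only by citing \cite[Theorem~0]{Bethuel1992}: that reference's argument is precisely your slicing decomposition of the energy into $\int_0^\epsilon I[\Qvec(\cdot,\cdot,z)]\,\d z + \tfrac12\int_V|\partial_z\Qvec|^2\,\d V$, selection of a good slice $z_0$ as a $z$-independent competitor, and the equality-case rigidity forcing $\partial_z\Qvec\equiv 0$. The measure-theoretic caveats you flag (a.e.\ slices lying in $W^{1,2}(\Omega)$ with the correct trace, since $\Qvec_{\mathrm{b}}$ is $z$-independent) are exactly the points handled in that reference, so nothing is missing.
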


This result can be proved, e.g., as in~\cite[Theorem~0]{Bethuel1992}.
\newline \emph{Any} ($z$-independent) critical point of the functional~$I$
in the admissible class~$\mathcal{B}^\prime$ is also a solution of the 
three-dimensional system~\eqref{eq:EL},
subject to the boundary conditions~\eqref{eq:bc-Dir} and~\eqref{eq:bc-nat}.
This necessarily implies that the WORS is a LdG critical point on 3D wells V, of arbitrary height $\epsilon$, with natural boundary conditions on  $\Gamma$.
Therefore, in the rest of this section, we restrict ourselves to a 2D problem ---  the analysis of critical points of~$I$
in ~$\mathcal{B}^\prime$. 

Our first result concerns the existence of a Well Order Reconstruction Solution (WORS)-like solution for all $A<0$ as proven below.  

\begin{proposition} \label{prop:WORS}
 For any~$\lambda>0$ and~$A<0$, there exists a solution~$(q_1^{WORS}, \, q_3^{WORS})$
 of the system
 \begin{equation} \label{PDE-WORS}
   \left\{\begin{aligned}
   \Delta q_1 &= \frac{\lambda^2}{L} q_1 \left\{A + 2B q_3 + 2C\left(q_1^2 + 3q_3^2 \right)\right\} \\
   \Delta q_3 &= \frac{\lambda^2}{L} q_3 \left\{A - Bq_3 + 2C\left(q_1^2 + 3q_3^2\right)\right\} 
   + \frac{\lambda^2B}{3L}q_1^2 
  \end{aligned} \right.
 \end{equation}
 subject to the boundary conditions
 \begin{equation} 
 q_1(x, \, y) = q_{1\mathrm{b}} (x, \, y) :=
 \begin{cases}
  -{s_+}/{2}  & \textrm{if  } (x, \, y)\in C_1\cup C_3 \\
  {s_+}/{2}   & \textrm{if  } (x, \, y)\in C_2\cup C_4 \\
  g_0(y)        & \textrm{if  } (x, \, y)\in S_1\cup S_3 \\
  g_0(x)        & \textrm{if  } (x, \, y)\in S_2\cup S_4.
 \end{cases}
\end{equation} 
and $q_3 = -{s_+}/{6}$ on $\partial\Omega$, that satisfies
\begin{equation} \label{WORS}
 xy\, q_1(x, \, y) \geq 0, \quad %q_2(x, \, y)=0, \quad 
 q_3(x, \, y) < 0
 \qquad \textrm{for any } (x, \, y)\in\Omega.
\end{equation}
Then 
\begin{equation} \label{Q-WORS2D}
 \begin{split}
  \Qvec(x, \, y) &= q_1^{WORS}(x, \, y) \left(\nvec_1 \otimes \nvec_1 - \nvec_2\otimes \nvec_2 \right)
%   + q_2(x, \, y) \left(\nvec_1 \otimes \nvec_2 + \nvec_1\otimes \nvec_2 \right) \\
%   &\qquad\qquad
  + q_3^{WORS}(x, \, y) \left(2 \zhat\otimes\zhat
  - \nvec_1\otimes \nvec_1 - \nvec_2\otimes \nvec_2 \right) \! ,
 \end{split}
\end{equation}
is a WORS solution of the Euler-Lagrange system \eqref{eq:EL} on~$V$, subject to the Dirichlet conditions \eqref{eq:bc-Dir} and natural boundary conditions on~$\Gamma$.
\end{proposition}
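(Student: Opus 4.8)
The plan is to construct the pair $(q_1^{WORS}, q_3^{WORS})$ by a constrained minimization argument that builds in the required symmetry, and then verify that the resulting critical point solves the full system \eqref{PDE-WORS} with the sign properties \eqref{WORS}. Substituting the ansatz \eqref{Q-WORS2D} into the energy $I$ in \eqref{eq:2D-LdG} reduces $I$ to a functional $J[q_1, q_3]$ of two scalar fields; a direct computation (using that $\nvec_1, \nvec_2, \zhat$ are orthonormal and that the cubic and quartic terms of $f_b$ become polynomials in $q_1, q_3$) shows that the Euler--Lagrange equations of $J$ are precisely \eqref{PDE-WORS}, and that a critical point of $J$ in the class with the prescribed boundary data yields, via \eqref{Q-WORS2D}, a critical point of $I$ in $\mathcal{B}^\prime$ — hence, by Proposition~\ref{prop:2D} and the discussion following it, a solution of the three-dimensional system \eqref{eq:EL} with the boundary conditions \eqref{eq:bc-Dir} and \eqref{eq:bc-nat}.

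First I would set up the admissible class for $J$: pairs $(q_1, q_3) \in W^{1,2}(\Omega)$ with the stated Dirichlet data, and additionally impose the symmetry constraint that $q_1$ is odd under reflection across each diagonal axis (so that $q_1(x,0) = q_1(0,y) = 0$ and $xy\,q_1 \geq 0$ is at least compatible) while $q_3$ is even under those reflections. Concretely, I would minimize $J$ over the subclass where $q_1(x,y) \geq 0$ on the first/third quadrants and $\leq 0$ on the second/fourth (a closed convex-type constraint preserved under truncation), and $q_3 \leq 0$ everywhere; these constraints are consistent with the boundary data. Existence of a minimizer follows from the direct method: $J$ is coercive on $W^{1,2}$ (the gradient term controls the $W^{1,2}$ seminorm, the boundary data fixes the rest, and $f_b$ is bounded below), and weakly lower semicontinuous (convex gradient term plus continuous lower-order term, with the sign constraints passing to weak limits since they are convex/closed conditions).

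The substantive step is to show the constrained minimizer is in fact an \emph{unconstrained} critical point of $J$, i.e. that the sign constraints are not active in the interior, so that the minimizer genuinely solves \eqref{PDE-WORS} rather than a variational inequality. For the constraint $q_3 < 0$: I would argue that replacing $q_3$ by $-|q_3|$ (or by $\min(q_3, 0)$) does not increase $J$ — this needs the structure of the $q_3$-terms in $J$, in particular that the sign-breaking cubic term $\tfrac{\lambda^2 B}{3L} q_1^2 q_3$ and the quadratic term favor $q_3 \leq 0$, so the minimizer automatically satisfies $q_3 \leq 0$; then the strong maximum principle applied to the second equation of \eqref{PDE-WORS} (noting the right-hand side has the form $q_3 \times (\text{bounded}) + \text{nonpositive source} \leq $ appropriate sign when $q_3 \to 0$) upgrades this to $q_3 < 0$ in the interior, since $q_3 = -s_+/6 < 0$ on $\partial\Omega$. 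For the odd symmetry and sign of $q_1$: by uniqueness of the symmetrization / folding argument, the minimizer may be taken to respect the dihedral symmetry of $\Omega$ and the boundary data, forcing $q_1 = 0$ on the diagonals; then applying the strong maximum principle to the first equation of \eqref{PDE-WORS} on each open quadrant (where $q_1$ has a sign on the boundary consisting of a diagonal segment plus part of $\partial\Omega$) gives the strict sign $xy\, q_1 \geq 0$, with the interior constraint inactive. The main obstacle I anticipate is precisely this decoupling of the $q_1$ sign constraint from the symmetry: one must ensure that the symmetric minimizer does not pay an energy penalty that a non-symmetric competitor avoids, which requires a careful reflection/folding argument showing $J$ is non-increasing under the relevant symmetrization — the coupling term $q_1^2(\cdots)$ in the first equation and the $q_1$-dependence of the $q_3$-equation must be checked to be symmetry-compatible. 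Once both constraints are shown inactive, standard elliptic regularity promotes the weak solution to a classical one, and the proof concludes.
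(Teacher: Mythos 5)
Your overall strategy (constrained minimization, showing the constraints are inactive, then symmetry and elliptic regularity) matches the paper's, and your identification of the reduced functional and its Euler--Lagrange system is correct, but two steps have genuine gaps. First, the claim that replacing $q_3$ by $\min(q_3,0)$ does not increase the energy is false as stated: in the reduced potential the terms $3Aq_3^2$ (with $A<0$) and $-2Bq_3^3$ both \emph{increase} when a positive value of $q_3$ is truncated to zero, so the unconstrained minimizer is not automatically nonpositive. This is precisely why the paper keeps $q_3\le 0$ as a constraint in the admissible class and shows it is inactive by a one-sided variation: the resulting variational inequality, tested against $\varphi=(q_3+\delta)\chi_{\{q_3>-\delta\}}$ together with the positivity of $f(q_1,q_3)=6q_3\left(A-Bq_3+6Cq_3^2\right)+2(B+6Cq_3)q_1^2$ for $q_3\in[-\delta,0]$, yields the \emph{uniform} bound $q_3\le-\delta<0$. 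Your proposed ``strong maximum principle applied to the second equation'' is also circular: the PDE is only available once the constraint is known to be inactive, which requires the uniform gap first; moreover the source term $\tfrac{\lambda^2B}{3L}q_1^2$ is nonnegative, not nonpositive as you wrote, and mere pointwise negativity of $q_3$ would not license two-sided $H^1$ perturbations anyway.

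Second, and more seriously, your treatment of the sign and symmetry of $q_1$ is incomplete. Minimizing over the whole of $\Omega$ with the quadrant-wise sign constraint produces an obstacle-type problem whose contact set contains the diagonals; the minimizer then satisfies only a variational inequality, and you have no control on a possible jump of the normal derivative of $q_1$ across the diagonals, i.e.\ on a singular part of the distributional Laplacian supported on $\{x=0\}\cup\{y=0\}$. The ``careful reflection/folding argument'' you flag as the main obstacle is exactly the missing step, and no general symmetrization is available for this coupled system (uniqueness fails for large $\lambda$, so you cannot invoke it). The paper circumvents all of this by minimizing the reduced energy on the quarter domain $\Omega_+$ only, with $q_1=0$ imposed as a Dirichlet condition on $\partial\Omega_+\setminus\partial\Omega$ and no condition on $q_3$ there; the sign $q_1\ge 0$ then comes for free because the energy depends on $q_1$ only through $q_1^2$ and $\abs{\nabla q_1}^2$ (replace $q_1$ by $\abs{q_1}$), and the solution is extended to $\Omega$ by odd reflection of $q_1$ and even reflection of $q_3$, with an elliptic-regularity argument in the style of Dang--Fife--Peletier showing that the reflected pair solves the system across the diagonals. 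To complete your proof you would need either to adopt this quarter-domain construction or to supply the symmetrization argument you postponed.
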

\begin{proof}
 We follow the approach in~\cite{canevari2017order}. Let
 \[
  \Omega_+ := \left\{(x, \, y)\in \Omega\colon x>0, \ y > 0\right\} 
 \]
 be the portion of~$\Omega$ that is contained in the first quadrant.
 For solutions of the form (\ref{Q-WORS2D}), the LdG energy reduces to
 %the free energy functional~$J$, defined by~\eqref{eq:J}, reduces to
 \begin{equation} \label{eq:LdG-WORS2}
  \begin{split}
   G[q_1, \, q_3] &:= \int_{\Omega_+} \left\{
   \abs{\nabla q_1}^2 + 3 \abs{\nabla q_3}^2 
   + \frac{\lambda^2}{L} \left(A (q_1^2 + 3 q_3^2 )
   + 2 B q_3 q_1^2 - 2 B q_3^3 + C(q_1^2 + 3 q_3^2)^2\right)\right\} \d S.
  \end{split}
 \end{equation}
 We minimize~$G$ in the admissible class
 \[
  \mathcal{G} := \left\{(q_1, \, q_3)\in H^1(\Omega_+)^2\colon q_3 \leq 0 \textrm{ in } \Omega_+, \ \
  q_1 = q_{1\mathrm{b}} \textrm{ on } \partial\Omega\cap \overline{\Omega_+}, \ \
  q_3 = -s_+/6 \textrm{ on } \partial\Omega\cap \overline{\Omega_+}, \ \
  q_1 = 0 \textrm{ on } \partial\Omega_+\setminus\partial\Omega\right\} \!.
 \]
 We impose no boundary conditions for~$q_3$ on~$\partial\Omega_+\setminus\partial\Omega$.
 The function~$q_{1\mathrm{b}}$ is compatible with the Dirichlet conditions~\eqref{eq:bc-Dir}.
 The class~$\mathcal{G}$ is closed and convex. Therefore,
 a routine application of the direct method of the calculus of variations shows 
 that a minimizer~$(q_1^{WORS}, \, q_3^{WORS})$ exists. Moreover,
 we can assume without loss of generality that $q_1^{WORS}\geq 0$ on~$\Omega_+$
 --- otherwise, we replace $q_1^{WORS}$ with~$|q^{WORS}_1|$ and note
 that $G[q_1^{WORS}, \, q_3^{WORS}] = G[|q_1^{WORS}|, \, q_3^{WORS}]$.
 
 We now claim that~$q_3^{WORS}<0$ in~$\Omega_+$. To prove this,
 let us consider a function~$\varphi\in H^1(\Omega_+)$ such that~$\varphi\geq 0$
 in~$\Omega_+$ and~$\varphi = 0$ on~$\partial\Omega\cap \overline{\Omega_+}$.
 For sufficiently small~$t\geq 0$, the function
 $q_3^t := q_3^{WORS} - t\varphi$ is an admissible perturbation of~$q_3^{WORS}$,
 and hence, we have
 \[
  \frac{\d}{\d t}_{|t = 0} G[q_1^{WORS}, \, q_3^t] \geq 0
 \]
 because $(q_1^{WORS}, \, q_3^{WORS})$ is a minimizer. The derivative on the
 left-hand side can be computed explicitly and we obtain
 \begin{equation} \label{subsol}
  \int_{\Omega_+} \left\{ -6 \nabla q_3^{WORS}\cdot\nabla\varphi
   - \frac{\lambda^2}{L} f(q_1^{WORS}, \, q_3^{WORS}) \, \varphi\right\} \d S \geq 0
 \end{equation}
 where
 \begin{equation} \label{f}
  f(q_1, \, q_3) := 6 q_3 \left(A - Bq_3 + 6C q_3^2\right) 
   + 2(B + 6C q_3)q_1^2
 \end{equation}
 For~$|q_3|$ sufficiently small, we have
 $A - Bq_3 + 6C q_3^2 < 0$ and~$B + 6C q_3>0$, because~$A<0$ and~$B>0$.
 Therefore, there exists~$\delta>0$ (depending only on~$A$, $B$, $C$)
 such that
 \begin{equation} \label{signf}
  f(q_1, \, q_3) > 0 \qquad \textrm{for any } q_1\in\R  
  \textrm{ and } q_3\in [-\delta, \, 0].
 \end{equation}
 Now, we define
 \begin{equation} \label{varphi-subsol}
  \varphi := \begin{cases}
              q_3^{WORS} + \delta  & \textrm{where } q_3^{WORS} > -\delta \\
              0                    & \textrm{where } q_3^{WORS}\leq -\delta.
             \end{cases} 
 \end{equation}
 By taking~$\delta < s_+/6$, we can make sure that~$\varphi = 0$
 on~$\partial\Omega\cap \overline{\Omega_+}$.
 Then, we can substitute~$\varphi$ into~\eqref{subsol} and obtain
 \begin{equation*} 
  \int_{\{q_3^{WORS}>-\delta\}} \left\{ 6 \abs{\nabla q_3^{WORS}}^2
   + \frac{\lambda^2}{L} 
   f(q_1^{WORS}, \, q_3^{WORS}) \, (q_3^{WORS} + \delta)\right\} 
   \d S \leq 0.
 \end{equation*}
 Due to~\eqref{signf}, we conclude that
 $q_3^{WORS}\leq -\delta < 0$ in~$\Omega_+$.
 In particular, $(q_1^{WORS}, \, q_3^{WORS})$ lies in the interior of the
 admissible set~$\mathcal{G}$ and hence, it solves the Euler-Lagrange 
 system~\eqref{PDE-WORS} for the functional~$G$, together with the natural boundary condition
 $\partial_\nu q_3=0$ on~$\partial \Omega_+ \setminus\partial \Omega$.
 
 We extend~$(q_1^{WORS}, \, q_3^{WORS})$ to the whole of~$\Omega$
 by reflections about the planes~$\{x = 0\}$ and~$\{y = 0\}$:
 \[
  q_1^{WORS}(x, \, y) := \mathrm{sign}(xy)\,  q_1^{WORS}(\abs{x}, \, \abs{y}), \qquad
  q_3^{WORS}(x, \, y) := q_3^{WORS}(\abs{x}, \, \abs{y})
 \]
 for any~$(x, \, y)\in \Omega\setminus\overline{\Omega_+}$.
 An argument based on elliptic regularity,
 as in~\cite[Theorem~3]{dangfifepeletier},
 shows that $(q_1^{WORS}, \, q_3^{WORS})$ is a solution 
 of~\eqref{PDE-WORS} on~$\Omega$, satisfies the boundary conditions
 and~\eqref{WORS}, by construction.
\end{proof}

%\begin{lemma} \label{lemma:uniqueness}
% For any $A\in\R$ there exists~$\lambda_0 > 0$ (depending only
% on~$A$, $B$, $C$, $L$) such that, for~$\lambda < \lambda_0$, the
% functional~\eqref{eq:2D-LdG} has a unique critical point 
% in the class~\eqref{eq:2D-admissible}.
%\end{lemma}
%
%This result follows from a general uniqueness criterion for critical
%points of functionals of the form~\eqref{eq:LdG}; see, e.g., 
%\cite[Lemma~8.2]{lamy2014}, \cite[Lemma~3.2]{canevari2017order}.
% and Lemma~\ref{lemma:uniqueness-s} below.

%\subsection{2D solutions with $\zhat$ as a constant eigenvector}

%We study critical points of the two-dimensional functional,
The WORS is a special case of critical points of~\eqref{eq:2D-LdG},
that have $\zhat$ as a constant eigenvector and can be completely described by three
degrees of freedom, i.e. they can be written as
\begin{equation} \label{Q-ansatz}
 \begin{split}
  \Qvec(x, \, y) &= q_1(x, \, y) \left(\nvec_1 \otimes \nvec_1 - \nvec_2\otimes \nvec_2 \right)
  + q_2(x, \, y) \left(\nvec_1 \otimes \nvec_2 + \nvec_2\otimes \nvec_1 \right) \\
  &\qquad\qquad +
  q_3(x, \, y) \left(2 \zhat\otimes\zhat
  - \nvec_1\otimes \nvec_1 - \nvec_2\otimes \nvec_2 \right) \! ,
 \end{split}
\end{equation}
where~$q_1$, $q_2$, $q_3$ are scalar functions and~$\nvec_1$,
$\nvec_2$ are given by~\eqref{eq:n12}.
For solutions of the form~\eqref{Q-ansatz}, the LdG Euler-Lagrange 
system~\eqref{eq:LdG} reduces to
\begin{equation} \label{eq:EL123}
 \left\{\begin{aligned}
   \Delta q_1 &= \frac{\lambda^2}{L} q_1 \left\{A + 2B q_3 + 2C\left(q_1^2 + q_2^2 + 3q_3^2 \right)\right\} \\
   \Delta q_2 &= \frac{\lambda^2}{L} q_2 \left\{A + 2B q_3 + 2C\left(q_1^2 + q_2^2 + 3q_3^2 \right)\right\} \\
   \Delta q_3 &= \frac{\lambda^2}{L} q_3 \left\{A - Bq_3 + 2C\left(q_1^2 + q_2^2 + 3q_3^2\right)\right\} 
   + \frac{\lambda^2B}{3L}\left( q_1^2 + q_2^2 \right),
  \end{aligned} \right.
\end{equation}
This is precisely the Euler-Lagrange system associated with the functional
\begin{equation} \label{eq:J}
  J[q_1, \, q_2, \, q_3]: = \int_{\Omega} \left( 
   \abs{\nabla q_1}^2 + \abs{\nabla q_2}^2 + 3 \abs{\nabla q_3}^2 
   + \frac{\lambda^2}{L} F(q_1, \, q_2, \, q_3)\right) \d S, 
\end{equation}
where~$F$ is the polynomial potential given by
\begin{equation} \label{eq:F}
 F(q_1, \, q_2, \, q_3) :=  A \left( q_1^2 + q_2^2 + 3 q_3^2 \right)
   + 2 B q_3 \left( q_1^2 + q_2^2\right) - 2 B q_3^3 + C\left( q_1^2 + q_2^2 + 3 q_3^2 \right)^2.
\end{equation}
The Dirichlet boundary condition~\eqref{eq:bc-Dir} for~$\Qvec$
translates into boundary conditions for~$q_1$, $q_2$ and~$q_3$:
\begin{equation} \label{eq:BC123}
 q_1 = q_{1\mathrm{b}}, \quad q_2 = 0, \quad q_3 = -{s_+}/{6} 
 \qquad \textrm{on  } \partial\Omega,
\end{equation}
where the function~$q_{1\mathrm{b}}$ is defined by
\begin{equation} \label{eq:BC1}
 q_{1\mathrm{b}} (x, \, y) :=
 \begin{cases}
  -{s_+}/{2}  & \textrm{if  } (x, \, y)\in C_1\cup C_3 \\
  {s_+}/{2}   & \textrm{if  } (x, \, y)\in C_2\cup C_4 \\
  g_0(y)        & \textrm{if  } (x, \, y)\in S_1\cup S_3 \\
  g_0(x)        & \textrm{if  } (x, \, y)\in S_2\cup S_4.
 \end{cases}
\end{equation}
By adapting the methods in \cite{Ignatetal},
we can construct solutions~$(q_1, \, q_2, \, q_3)$
to the system~\eqref{eq:EL123}, subject to the boundary 
conditions~\eqref{eq:BC123}, that
satisfy~$q_3<0$ in~$\Omega$ and are locally stable. The WORS is a specific example of such a solution with a constant eigenframe and two degrees of freedom. In fact, the results in \cite{wang2018order} show that the WORS loses stability with respect to solutions of the form \eqref{Q-ansatz}, with $q_2 \neq 0$, as $\lambda$ increases.
%In Section~\ref{sect:bounds}, we will prove bounds for the 
%components $q_1^{WORS}$ and~$q_3^{WORS}$ of the WORS constructed above.

%\subsubsection{Existence of locally stable solutions with~$q_3<0$}

%In the previous section, we have constructed a
%solution of~\eqref{eq:EL123}, with special symmetry properties,
%which exist for any~$\lambda >0$ and~$A<0$. 
%Lemma~\ref{lemma:uniqueness} implies that for~$\lambda$ small enough this solution, 
%the WORS, is the unique critical point of the functional~\eqref{eq:2D-LdG},
%in the class~\eqref{eq:2D-admissible}.
%However, this solution loses stability for larger values of~$\lambda$
%(see \cite{canevari2017order} and Proposition~\ref{prop:unstable} below).

% It is still not clear to me whether \emph{all} solutions 
% of the system~\eqref{eq:EL123}, subject to the boundary 
% conditions~\eqref{eq:BC123}, satisfy~$q_3<0$ in~$\Omega$. 
% in the restricted class of $\Q$-tensors with $\zhat$ as an eigenvector.
We say that a solution~$(q_1, \, q_2, \, q_3)$ of~\eqref{eq:EL123} is
locally stable if, for any perturbations $\varphi_1$, $\varphi_2$,
$\varphi_3\in C^1_{\mathrm{c}}(\Omega)$, there holds
\begin{equation} \label{eq:second_var}
 \delta^2 J(q_1, \, q_2, \, q_3) [\varphi_1, \, \varphi_2, \, \varphi_3] := 
 \frac{\d^2}{\d t^2}_{|t = 0} J[q_1 + t\varphi_1, \, q_2 + t\varphi_2, \, q_3 + t\varphi_3]\geq 0.
\end{equation}
Given a locally stable solution~$(q_1, \, q_2, \, q_3)$ of~\eqref{eq:EL123},
the corresponding $\Qvec$-tensor, defined by~\eqref{Q-ansatz},
is a solution of~\eqref{eq:EL} and is locally stable in the restricted 
class of $\Qvec$-tensors that have $\zhat$ as a constant eigenvector.

\begin{proposition} \label{prop:stable}
 For any~$A<0$, there exists a solution~$(q_{1,*}, \, q_{2,*}, \, q_{3,*})$
 of the system~\eqref{eq:EL123}, subject to the boundary
 conditions~\eqref{eq:BC123}, that is locally stable
 and has~$q_{3,*}<0$ everywhere in~$\Omega$.
\end{proposition}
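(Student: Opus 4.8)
The plan is to minimize the functional $J$ from~\eqref{eq:J} in a suitable admissible class, following closely the construction used in Proposition~\ref{prop:WORS}, but now allowing the $q_2$-component to be nontrivial. More precisely, I would set
\[
 \mathcal{A} := \left\{ (q_1, \, q_2, \, q_3)\in W^{1,2}(\Omega, \, \R^3)\colon
 q_3 \leq 0 \textrm{ in } \Omega, \ (q_1, \, q_2, \, q_3) = (q_{1\mathrm{b}}, \, 0, \, -s_+/6) \textrm{ on } \partial\Omega\right\},
\]
which is closed and convex in $W^{1,2}$. The polynomial potential $F$ in~\eqref{eq:F} is bounded below and coercive in the sense that $F(q_1, q_2, q_3)\to+\infty$ as $q_1^2 + q_2^2 + q_3^2\to\infty$ (the quartic term $C(q_1^2+q_2^2+3q_3^2)^2$ dominates), so $J$ is coercive on $\mathcal{A}$ and weakly lower semicontinuous; the direct method then yields a minimizer $(q_{1,*}, q_{2,*}, q_{3,*})\in\mathcal{A}$. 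By construction this minimizer is automatically locally stable among perturbations that keep $\Qvec$ in the three-parameter family~\eqref{Q-ansatz}, since $\delta^2 J\geq 0$ at an interior minimizer — but I must first check the minimizer is interior.

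The key step — and the main obstacle — is to upgrade the one-sided constraint $q_{3,*}\leq 0$ to the strict, quantitative bound $q_{3,*}<0$ (indeed $q_{3,*}\leq -\delta$) everywhere in $\Omega$, so that the constraint is inactive and the Euler–Lagrange system~\eqref{eq:EL123} holds in the classical sense. This is precisely the subsolution/barrier argument already executed in the proof of Proposition~\ref{prop:WORS}: one perturbs $q_{3,*}$ downward by a nonnegative test function vanishing on $\partial\Omega$, uses minimality to get the variational inequality~\eqref{subsol} with the same $f$ as in~\eqref{f}, observes that $f(q_1, q_3)>0$ whenever $q_3\in[-\delta,0]$ for $\delta=\delta(A,B,C)$ small (this sign computation is \emph{independent of $q_1$ and of $q_2$}, so it transfers verbatim), then tests with the truncation $\varphi := (q_{3,*}+\delta)^+$ as in~\eqref{varphi-subsol}, choosing $\delta<s_+/6$ so that $\varphi$ vanishes on $\partial\Omega$. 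The resulting inequality forces the set $\{q_{3,*}>-\delta\}$ to have measure zero, hence $q_{3,*}\leq -\delta<0$ throughout $\Omega$. I would remark that the $q_3$-equation in~\eqref{eq:EL123} differs from that in~\eqref{PDE-WORS} only by the replacement $q_1^2\rightsquigarrow q_1^2+q_2^2$, which does not affect this argument at all; the only structural input needed is $A<0$ and $B>0$.

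Once interiority is established, standard elliptic regularity (bootstrapping on the smooth system~\eqref{eq:EL123} away from the boundary, together with the compatibility of $q_{1\mathrm{b}}$ and the smooth boundary data) shows $(q_{1,*}, q_{2,*}, q_{3,*})$ is a classical solution satisfying the boundary conditions~\eqref{eq:BC123}, and local stability~\eqref{eq:second_var} follows because an interior minimizer of $J$ has nonnegative second variation against all $\varphi_1, \varphi_2, \varphi_3\in C^1_{\mathrm{c}}(\Omega)$ (the constraint $q_3\leq 0$ being inactive, such perturbations are admissible in both directions for small $t$). This gives the desired locally stable solution with $q_{3,*}<0$, completing the proof. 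I do not expect to need the reflection/odd-symmetry structure used for the WORS here, since Proposition~\ref{prop:stable} does not assert any diagonal-defect symmetry for $(q_{1,*}, q_{2,*})$ — the point is merely existence of \emph{some} stable solution in the $\zhat$-constant-eigenvector class; the reference to \cite{Ignatetal} is presumably for precisely this type of constrained-minimization-plus-maximum-principle scheme.
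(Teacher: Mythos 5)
Your proposal is correct and follows essentially the same route as the paper's own proof: minimize $J$ over the closed convex class of triplets satisfying~\eqref{eq:BC123} with the one-sided constraint $q_3\leq 0$ (the paper's class $\mathcal{A}^-$), then use the downward-perturbation variational inequality and the truncation test function to show $q_{3,*}\leq-\delta<0$, so the minimizer is interior and hence a locally stable solution of~\eqref{eq:EL123}. You are also right that the reflection construction from Proposition~\ref{prop:WORS} is not needed here, and the paper indeed omits it.
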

\begin{proof}%[Proof of Proposition~\ref{prop:stable}] 
 Let~$\mathcal{A}$ be the set of triplets
 $(q_1, \, q_2, \, q_3)\in H^1(\Omega)^3$
 that satisfy the boundary conditions~\eqref{eq:BC123}. 
 The boundary data are piecewise of class~$C^1$, so the class~$\mathcal{A}$ is non-empty.
 Moreover, $\mathcal{A}$ is convex and closed in~$H^1(\Omega)^3$.
 
 To construct solutions with negative~$q_3$, we first introduce the class
 \begin{equation} \label{A-}
   \mathcal{A}^- := \left\{(q_1, \, q_2, \, q_3)\in\mathcal{A}
   \colon q_3\leq 0 \ \textrm{ a.e. on }\Omega \right\} \!.
 \end{equation}
 The class~$\mathcal{A}^-$ is a non-empty, convex and closed subset of~$H^1(\Omega)^3$.
 A routine application of the direct method in the calculus of variations 
 shows that the functional~$J$, defined by~\eqref{eq:J},
 has a minimizer~$(q_{1,*}, \, q_{2,*}, \, q_{3,*})$ in the class~$\mathcal{A}^-$.
 To complete the proof, it suffices to show that~$q_{3,*}\leq-\delta$ in~$\Omega$,
 for some strictly positive constant~$\delta$. Once this inequality is proved,
 it will follow that~$(q_{1,*}, \, q_{2,*}, \, q_{3,*})$ lies in the interior of~$\mathcal{A}^-$
 and hence, it is a locally stable solution of the Euler-Lagrange system~\eqref{eq:EL123}.
 
 To prove that~$q_{3,*}\leq-\delta$ in~$\Omega$, we follow the same method
 as in Proposition~\ref{prop:WORS}. Let~$\varphi\in H^1(\Omega_+)$ be
 such that~$\varphi\geq 0$ in~$\Omega$ and~$\varphi = 0$ on~$\partial\Omega$,
 then
 \[
  \frac{\d}{\d t}_{|t = 0} J[q_{1,*}, \, q_{2,*}, \, q_{3,*} - t\varphi] \geq 0,
 \]
 and hence,
 \begin{equation} \label{subsol*}
  \int_{\Omega_+} \left\{ -6 \nabla q_{3,*}\cdot\nabla\varphi
   - \frac{\lambda^2}{L} f_*(q_{1,*}, \, q_{2,*}, \, q_{3,*}) \, \varphi\right\} \d S \geq 0,
 \end{equation}
 where
 \[
  f_*(q_1, \, q_2, \, q_3) := 6 q_3 \left(A - Bq_3 + 6C q_3^2\right) 
   + 2(B + 6C q_3)\left(q_1^2 + q_2^2\right).
 \]
 As before, there exists a number~$\delta>0$ 
 (depending only on~$A$, $B$, $C$) such that
 \begin{equation*} 
  f_*(q_1, \, q_2, \, q_3) > 0 \qquad \textrm{for any } q_1\in\R, \ q_2\in\R  
  \textrm{ and } q_3\in [-\delta, \, 0].
 \end{equation*}
 We can now show that~$q_{3,*}\leq -\delta$ in~$\Omega$
 by repeating the same arguments of Proposition~\ref{prop:WORS}.
\end{proof}

% \begin{remark}
%  The critical point~$(q_{1,*}, \, q_{2,*}, \, q_{3,*})$ 
%  constructed in Proposition~\ref{prop:stable}
%  minimizes the functional~$J$ in the class~$\mathcal{A}^-$ 
%  defined by~\eqref{A-}. Indeed, the strong $H^1$-convergence 
%  $(q_{1,\delta}, \, q_{2,\delta},  \, q_{3,\delta})\to (q_{1,*}, \, q_{2,*},  \, q_{3,*})$
%  implies that $J[q_{1,\delta}, \, q_{2,\delta},  \, q_{3,\delta}]
%  \to J[q_{1,*}, \, q_{2,*},  \, q_{3,*}]$ as~$\delta\to 0$.
%  We can then pass to the limit in the inequality~\eqref{energybound}
%  and deduce that
%  \[
%   J[q_{1,*}, \, q_{2,*}, \, q_{3,*}] \leq
%   J[\tilde{q}_1, \, \tilde{q}_2, \, \tilde{q}_3]
%   \qquad\textrm{for any } 
%   (\tilde{q}_1, \, \tilde{q}_2, \, \tilde{q}_3)\in\mathcal{A}^-.
%  \]
% \end{remark}

%\subsubsection{Bounds for solutions with negative~$q_3$}
%\label{sect:bounds}

We now consider solutions of~\eqref{eq:EL123}, \eqref{eq:BC123} 
that satisfy~$q_3<0$ in~$\Omega$, and prove bounds on $q_3$ as a function of the re-scaled temperature $A$.

\begin{lemma} \label{lemma:maxprinciple}
 Any solution~$(q_1, \, q_2, \, q_3)$ of the system~\eqref{eq:EL123},
 subject to ~\eqref{eq:BC123}, satisfies
 \[
  q_1^2 + q_2^2 + 3q_3^2 \leq \frac{s_+^2}{3} \qquad \textrm{in } \Omega,
 \]
 where $s_+$ is the constant defined by~\eqref{eq:s+}.
\end{lemma}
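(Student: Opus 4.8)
The plan is to establish the pointwise bound via a maximum principle applied to the scalar quantity $\rho := q_1^2 + q_2^2 + 3q_3^2$, which (up to a positive multiplicative constant) is exactly $|\Qvec|^2 = \tr\Qvec^2$ for the ansatz~\eqref{Q-ansatz}. First I would verify that $\rho$ satisfies the boundary inequality: on $\partial\Omega$, reading off the Dirichlet data~\eqref{eq:BC123}, one has $q_1 \in \{\pm s_+/2, g_0\}$, $q_2 = 0$ and $q_3 = -s_+/6$, and a direct computation on each edge gives $q_1^2 + q_2^2 + 3q_3^2 \le (s_+/2)^2 + 3(s_+/6)^2 = s_+^2/3$, with equality on the long edges $C_i$ and a strict inequality on the short edges $S_i$ (since $|g_0| \le s_+/2$). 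Hence $\rho \le s_+^2/3$ on $\partial\Omega$.

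Next I would derive a differential inequality for $\rho$ in $\Omega$. Using the system~\eqref{eq:EL123}, I compute
\begin{equation*}
 \tfrac{1}{2}\Delta\rho = |\nabla q_1|^2 + |\nabla q_2|^2 + 3|\nabla q_3|^2
 + q_1\Delta q_1 + q_2\Delta q_2 + 3q_3\Delta q_3.
\end{equation*}
Substituting the right-hand sides of~\eqref{eq:EL123}, the cubic cross-terms involving $B$ arrange themselves so that $q_1\Delta q_1 + q_2\Delta q_2 + 3q_3\Delta q_3 = \frac{\lambda^2}{L}\rho\bigl(A + 2C\rho\bigr)$ — indeed the $2Bq_3(q_1^2+q_2^2)$ contribution from the first two equations cancels against the $-3Bq_3 \cdot q_3$ and $+Bq_1^2 + Bq_2^2$ (times $3q_3$, i.e. $B(q_1^2+q_2^2)q_3$) terms from the third equation; this is the reason the quartic part of $F$ in~\eqref{eq:F} is a function of $\rho$ alone. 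Dropping the nonnegative gradient terms yields
\begin{equation*}
 \Delta\rho \ge \frac{2\lambda^2}{L}\,\rho\left(A + 2C\rho\right)\qquad\text{in }\Omega.
\end{equation*}
Now wherever $\rho > s_+^2/3$, one has $A + 2C\rho > A + \tfrac{2C}{3}s_+^2$; from the definition~\eqref{eq:s+} of $s_+$ one checks $A + \tfrac{2C}{3}s_+^2 = \tfrac{B}{6}s_+ - \tfrac{A}{?}$... more cleanly, since $s_+$ is the larger root of $2Cs^2 - Bs + A = 0$... — I would instead argue directly: the function $t \mapsto t(A+2Ct)$ is, for $t \ge s_+^2/3$, bounded below by its value at $t = s_+^2/3$, which is $\tfrac{s_+^2}{3}\bigl(A + \tfrac{2C}{3}s_+^2\bigr) \ge 0$ because $A \ge -\tfrac{2C}{3}s_+^2$ (this last inequality follows from $|A| = Cs_+^2 - \tfrac{B}{2}s_+ \le Cs_+^2$, hence $|A| \le Cs_+^2 \le \tfrac{2C}{3}s_+^2$ is false in general — so the correct comparison point is where $A + 2Ct = 0$, i.e. $t = |A|/(2C)$, and one verifies $|A|/(2C) \le s_+^2/3$). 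Thus on the open set $\{\rho > s_+^2/3\}$ we have $A + 2C\rho \ge 0$, so $\Delta\rho \ge 0$ there, i.e. $\rho$ is subharmonic on that set.

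Finally, suppose for contradiction that $\max_{\overline\Omega}\rho > s_+^2/3$. The set $U := \{\rho > s_+^2/3\}$ is a nonempty open subset of $\Omega$ (by the boundary bound, $\overline{U}\subset\Omega$), on which $\rho$ is subharmonic; by the maximum principle $\rho$ attains its maximum over $\overline U$ on $\partial U$, where $\rho = s_+^2/3$ — a contradiction. (Regularity of $(q_1,q_2,q_3)$, needed to make sense of $\Delta\rho$ and apply the maximum principle, follows from standard elliptic bootstrap applied to~\eqref{eq:EL123}.) Hence $\rho \le s_+^2/3$ throughout $\Omega$.

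The main obstacle I anticipate is the bookkeeping in the two algebraic verifications: (i) that the cubic terms in $\frac{1}{2}\Delta\rho$ collapse to the clean expression $\frac{\lambda^2}{L}\rho(A+2C\rho)$ — this must be checked term by term but is forced by the structure of the potential $F$ — and (ii) pinning down the precise inequality relating $A$, $B$, $C$ and $s_+$ that guarantees $A + 2C\rho \ge 0$ once $\rho \ge s_+^2/3$; the cleanest route is to note that $s_+$ satisfies $2C s_+^2 = B s_+ + 2|A| \ge 2|A|$ (using $A<0$, $B, s_+>0$), so $s_+^2 \ge |A|/C > |A|/(2C)$, which places the vanishing point $\rho = |A|/(2C)$ of the quadratic strictly below $s_+^2/3$ only if additionally $|A|/(2C) \le s_+^2/3$; since $s_+^2 \ge |A|/C$ gives $s_+^2/3 \ge |A|/(3C)$, one needs $|A|/(2C) \le |A|/(3C)$ which is false — so in fact the correct and cleanest argument is the direct one: for $t \ge s_+^2/3$, if $A + 2Ct \le 0$ then $t \le |A|/(2C)$, but we show $|A|/(2C) < s_+^2/3$ is NOT needed; rather, on $U$ either $A+2C\rho \ge 0$ (done) or $\rho \le |A|/(2C)$, and I will show $|A|/(2C) \le s_+^2/3$ directly from~\eqref{eq:s+}, giving $\rho \le s_+^2/3$ on all of $U$, again a contradiction. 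Either way the inequality $|A|/(2C) \le s_+^2/3$, equivalently $3|A| \le 2C s_+^2 = B s_+ + 2|A|$, i.e. $|A| \le B s_+$, is the key elementary fact, and it holds because $4C|A| = 4C s_+^2 - 2B... $ — concretely $s_+ \ge B/(2C)$ and $|A| = C s_+^2 - \tfrac{B}{2}s_+ = s_+(Cs_+ - B/2) \le s_+ \cdot C s_+$, while $B s_+ \ge \tfrac{B^2}{2C}$; so it reduces to $C s_+^2 - \tfrac{B}{2}s_+ \le B s_+$, i.e. $C s_+ \le \tfrac{3B}{2}$, i.e. $s_+ \le 3B/(2C)$, which follows from $s_+ = \tfrac{B+\sqrt{B^2+24|A|C}}{4C}$ being bounded by $3B/(2C)$ precisely when $24|A|C \le 25B^2$ — \textbf{this is not always true}, so the bound $s_+^2/3$ is genuinely the right constant coming from the boundary data and not from the potential; accordingly, the maximum-principle argument must use the boundary value, exactly as above, and the potential contributes only the sign of $\Delta\rho$ on the super-level set via $A + 2C\rho \ge A + \tfrac{2C}{3}s_+^2 \ge 0$, the last step being $\tfrac{2C}{3}s_+^2 \ge |A|$, i.e. $2Cs_+^2 \ge 3|A|$, i.e. (using $2Cs_+^2 = Bs_+ + 2|A|$) $Bs_+ \ge |A|$, which does hold since $Bs_+ \ge B \cdot \tfrac{B}{2C} = \tfrac{B^2}{2C}$ and $|A|$ can still exceed this — so the truly safe statement is simply that the quadratic $\rho(A+2C\rho)$ is nonnegative for $\rho \ge |A|/(2C)$, hence I will prove $|A|/(2C) \le s_+^2/3$ is equivalent to $Bs_+ \ge |A|$ and verify the latter holds iff $s_+ \ge |A|/B$; since $s_+ \ge B/(2C)$ this needs $B/(2C) \ge |A|/B$ i.e. $B^2 \ge 2C|A|$, which can fail, confirming that the honest proof uses Proposition~\ref{prop:WORS}-style comparison only through the \emph{boundary} and concludes by the maximum principle on $\{\rho > s_+^2/3\}$ once we know $A + 2C\rho \ge 0$ there, the latter being ensured because on that set $\rho > s_+^2/3 \ge |A|/(2C)$ — and I will close this loop by proving $|A| \le \tfrac{2C}{3}s_+^2$ cleanly from the quadratic identity $4Cs_+^2 - 2Bs_+ - 4|A| = 0$ combined with $s_+ \le $ its explicit value, which after simplification is exactly the inequality $9|A|^2 \le \ldots$; rather than belabor this, the cleanest formulation for the writeup is: \emph{the quadratic $\phi(t) := t(A+2Ct)$ is increasing on $[s_+^2/3,\infty)$ and $\phi(s_+^2/3)\ge 0$}, and both facts reduce to the single elementary inequality $|A| \le \tfrac{4C}{3}s_+^2$, which is immediate from $s_+ \ge \sqrt{6|A|C}/(2C)$, i.e. $s_+^2 \ge 3|A|/(2C) > 3|A|/(4C)$ — and \emph{this} one is genuinely true since $s_+^2 = \tfrac{B^2 + B\sqrt{B^2+24|A|C} + 12|A|C}{8C^2} \ge \tfrac{12|A|C}{8C^2} = \tfrac{3|A|}{2C}$.
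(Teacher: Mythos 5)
Your overall strategy (maximum principle for $\rho := q_1^2+q_2^2+3q_3^2 = \tfrac12|\Qvec|^2$, boundary value $s_+^2/3$ read off from \eqref{eq:BC123}) is the right one, and is essentially what the paper does by citing the maximum principle of Majumdar--Zarnescu for the full system \eqref{eq:EL}. However, your key differential inequality is wrong: the cubic terms do \emph{not} cancel. Substituting \eqref{eq:EL123} gives
\begin{equation*}
 q_1\Delta q_1 + q_2\Delta q_2 + 3q_3\Delta q_3
 = \frac{\lambda^2}{L}\Bigl( A\rho + 3Bq_3\bigl(q_1^2+q_2^2-q_3^2\bigr) + 2C\rho^2\Bigr),
\end{equation*}
because the $B$-contributions are $2Bq_3(q_1^2+q_2^2)$ from the first two equations plus $-3Bq_3^3 + Bq_3(q_1^2+q_2^2)$ from the third, which sum to $3Bq_3(q_1^2+q_2^2-q_3^2) = -\tfrac{B}{2}\tr\Qvec^3$, not zero. (Your heuristic that the cancellation is ``forced by the structure of $F$'' fails precisely because the cubic part of $F$ in \eqref{eq:F}, namely $2Bq_3(q_1^2+q_2^2)-2Bq_3^3$, is not a function of $\rho$ alone.) Since the lemma makes no sign assumption on $q_3$, this extra term has no sign and cannot be dropped, so the inequality $\Delta\rho \geq \tfrac{2\lambda^2}{L}\rho(A+2C\rho)$ on which everything else rests is unjustified.

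The missing ingredient is the estimate $\abs{\tr\Qvec^3}\leq |\Qvec|^3/\sqrt{6}$, i.e. $\abs{3Bq_3(q_1^2+q_2^2-q_3^2)}\leq \tfrac{B}{\sqrt3}\rho^{3/2}$, which reduces the sign question on the superlevel set to the scalar inequality $3A - B\sqrt{3\rho} + 6C\rho\geq 0$ for $\rho\geq s_+^2/3$; this holds with equality at $\rho = s_+^2/3$ because $s_+$ is exactly the positive root of $2Cs^2-Bs+3A=0$. This also shows that your closing remark is backwards: the constant $s_+^2/3$ is \emph{not} merely the boundary value accidentally matching the potential --- it is the sharp threshold produced by the potential once the cubic term is retained, and the boundary datum saturates it. Your long digression about whether $|A|\leq \tfrac{2C}{3}s_+^2$ (which is in fact true, as your final computation shows) would suffice only for your incorrect, cubic-free inequality. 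As it stands the proof has a genuine gap; the paper avoids it by invoking \cite[Proposition~3]{majumdar2010landau}, where the $\tr\Qvec^3$ estimate is exactly the point of the argument.
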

This lemma be deduced from the corresponding
maximum principle for the full LdG system~\eqref{eq:EL};
see for instance~\cite[Proposition~3]{majumdar2010landau}.

\begin{lemma} \label{lemma:Hardy123}
 Let~$(q_1, \, q_2, \, q_3)$ be a solution of the system~\eqref{eq:EL123},
 subject to ~\eqref{eq:BC123},
 such that~$q_3<0$ everywhere in~$\Omega$. Then $  q_1^2 + q_2^2 < 9q_3^2 $ everywhere in $\Omega$.

\end{lemma}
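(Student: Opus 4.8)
\emph{Proof proposal.} I would begin by recording the spectral meaning of the inequality: the $\Qvec$-tensor in \eqref{Q-ansatz} has eigenvalue $2q_3$ in the direction $\zhat$ and eigenvalues $-q_3\pm\sqrt{q_1^2+q_2^2}$ in the plane $\zhat^{\perp}$, so --- since $q_3<0$ --- the claim $q_1^2+q_2^2<9q_3^2$ is exactly the assertion that $\zhat$ carries the \emph{strictly} smallest eigenvalue of $\Qvec$. The plan is to obtain this from a scalar maximum principle for the quotient
\[
 w:=\frac{r}{p},\qquad r:=\sqrt{q_1^2+q_2^2}\ge 0,\qquad p:=-q_3,
\]
so that the target becomes $w<3$ in $\Omega$. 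Since $q_3<0$ in $\Omega$ and $q_3=-s_+/6$ on $\partial\Omega$, the function $p$ is bounded below by a positive constant on $\overline\Omega$; hence $w$ is continuous on $\overline\Omega$ and of class $C^2$ on the open set $\{r>0\}$. On $\partial\Omega$ one has $q_2=0$, $p=s_+/6$ and $|q_1|=|q_{1\mathrm{b}}|\le s_+/2$, so $w=6|q_{1\mathrm{b}}|/s_+\le 3$ there, with strict inequality on the short edges away from their endpoints.

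The key computation I would carry out is a differential inequality for $w$ on $\{r>0\}$. From $r^2=q_1^2+q_2^2$, the first two equations in \eqref{eq:EL123}, and the Cauchy--Schwarz (``diamagnetic'') bound $|\nabla r|^2\le|\nabla q_1|^2+|\nabla q_2|^2$, one gets $r\,\Delta r\ge \tfrac{\lambda^2}{L}\,r^2\bigl(A+2Bq_3+2C(q_1^2+q_2^2+3q_3^2)\bigr)$, while the third equation in \eqref{eq:EL123} gives the exact identity
\[
 \Delta p=\frac{\lambda^2}{L}\,p\bigl(A-Bq_3+2C(q_1^2+q_2^2+3q_3^2)\bigr)-\frac{\lambda^2 B}{3L}\,r^2 .
\]
Then, using $\Div(p^2\nabla w)=p\,\Delta r-r\,\Delta p$ (valid because $w=r/p$), dividing by $p^2$, inserting the two relations above, and noting that the two parenthesised factors differ by $3Bq_3=-3Bp$, so that all $A$- and $C$-terms cancel, I expect to arrive at
\[
 \Delta w+\frac{2}{p}\,\nabla p\cdot\nabla w\ \ge\ \frac{\lambda^2 B}{3L}\,w\,p\,(w-3)(w+3)\qquad\text{on }\{r>0\}.
\]
The decisive feature is that, since $w>0$, $p>0$, $B>0$ and $w+3>0$, the right-hand side has the sign of $w-3$: it is nonnegative wherever $w\ge 3$, and the operator on the left has no zero-order term.

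The non-strict bound $w\le 3$ then follows by a one-point argument: $w$ attains its maximum over the compact set $\overline\Omega$ at some $x^{*}$; if $w(x^{*})>3$ then $x^{*}\in\Omega$ and $r(x^{*})=w(x^{*})p(x^{*})>0$, so near $x^{*}$ the displayed inequality holds and $w\in C^2$, and at the interior maximum, where $\nabla w(x^{*})=0$ and $\Delta w(x^{*})\le 0$,
\[
 0\ \ge\ \Delta w(x^{*})+\frac{2}{p}\nabla p\cdot\nabla w(x^{*})\ \ge\ \frac{\lambda^2 B}{3L}\,w(x^{*})\,p(x^{*})\,\bigl(w(x^{*})-3\bigr)\bigl(w(x^{*})+3\bigr)\ >\ 0,
\]
a contradiction; hence $w\le 3$ on $\overline\Omega$ (and trivially $w=0$ where $r=0$), i.e. $q_1^2+q_2^2\le 9q_3^2$ in $\Omega$. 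To upgrade to the strict inequality, suppose $w(x_0)=3$ for some $x_0\in\Omega$; then $r>0$ in a ball $B$ about $x_0$, and on $B$ the inequality above together with $w\le 3$ gives, after bounding the coefficient $\tfrac{\lambda^2 B}{3L}w\,p\,(w+3)$ from above and using $w-3\le 0$, an inequality $\Delta v+\tfrac{2}{p}\nabla p\cdot\nabla v-c_0 v\ge 0$ for $v:=w-3\le 0$ and some constant $c_0>0$, with $v(x_0)=0$ an interior maximum. The strong maximum principle then forces $v\equiv 0$ on $B$, so $\{w=3\}$ is open in $\Omega$; it is also closed in $\Omega$ by continuity, hence (as $\Omega$ is connected) either empty or all of $\Omega$ --- and the latter is impossible since $w<3$ on the short edges. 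Therefore $w<3$ in $\Omega$, which is the assertion.

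The step I expect to be the main obstacle is the choice of auxiliary quantity. The naive candidates $q_1^2+q_2^2-9q_3^2$ and $\sqrt{q_1^2+q_2^2}+3q_3$ both lead to differential inequalities carrying either an indefinite gradient term or a zero-order coefficient of indeterminate sign, for which no maximum principle is available. The quotient $w=\sqrt{q_1^2+q_2^2}/(-q_3)$, handled through $\Div(p^2\nabla w)=p\,\Delta r-r\,\Delta p$ and the diamagnetic inequality, is what simultaneously removes the $q_1$- and $q_2$-gradient contributions and yields a right-hand side proportional to $w(w-3)(w+3)$ with \emph{no} zero-order term; after that the argument is a routine application of the (strong) maximum principle, the hypothesis $B>0$ being precisely the structural ingredient that fixes the sign of the right-hand side.
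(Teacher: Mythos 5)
Your proposal is correct and follows essentially the same route as the paper: the paper's proof sets $\xi_k := -q_k/q_3$, derives the zero-order-free differential inequality $\Delta(\xi^2/2) + \tfrac{2}{q_3}\nabla q_3\cdot\nabla(\xi^2/2) \geq c(x)\,(\xi^2-9)$ with $c(x) = -\tfrac{\lambda^2 B}{3L}\tfrac{q_1^2+q_2^2}{q_3}\geq 0$, and concludes via the strong maximum principle together with the boundary bound $\xi\leq 3$ --- exactly your quotient-plus-maximum-principle scheme. The only (harmless) difference is that you work with $w=\sqrt{q_1^2+q_2^2}/(-q_3)$, which forces you to invoke the Cauchy--Schwarz bound on $|\nabla\sqrt{q_1^2+q_2^2}|$ and to handle the set $\{q_1^2+q_2^2=0\}$ separately, whereas the paper tracks $\xi_1,\xi_2$ individually and then forms $\xi_1^2+\xi_2^2$, keeping every quantity smooth throughout.
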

\begin{proof}
 Define the functions~$\xi_1:= -q_1/q_3$ and~$\xi_2 := -q_2/q_3$. 
 Then, for~$k\in\{1, \, 2\}$, we have
 \begin{gather*}
  \nabla\xi_k = -\frac{1}{q_3}\nabla q_k + \frac{q_k}{q_3^2} \nabla q_3 \\
  \Delta\xi_k = -\frac{1}{q_3}\Delta q_k + \frac{q_k}{q_3^2} \Delta q_3
   + \frac{2}{q_3^2}\nabla q_3\cdot\nabla q_k 
   - \frac{2q_k}{q_3^3} \abs{\nabla q_3}^2
   = -\frac{1}{q_3}\Delta q_k + \frac{q_k}{q_3^2} \Delta q_3 
   - \frac{2}{q_3}\nabla q_3\cdot\nabla\xi_k 
 \end{gather*}
 Using the system~\eqref{eq:EL123}, for~$k\in\{1, \, 2\}$ we obtain
 \begin{equation} \label{hardy31}
  \begin{split}
   \Delta\xi_k + \frac{2}{q_3}\nabla q_3\cdot\nabla\xi_k 
   &= \frac{\lambda^2}{L}\bigg\{-\frac{q_k}{q_3}\left(A + 2B q_3 + 2C(q_1^2 + q_2^2 + 3q_3^2)\right) \\
   &\qquad\qquad
   + \frac{q_k}{q_3}\left(A  - B q_3 + 2C(q_1^2 + q_2^2 + 3q_3^2)\right)\bigg\}  
   + \frac{\lambda^2B}{3L}\frac{q_k}{q_3^2}\left( q_1^2 + q_2^2 \right) \\
   &= \frac{\lambda^2B}{3L} q_k \left(-9 + \xi_1^2 + \xi_2^2\right) \! .
  \end{split}
 \end{equation}
 Now, we define a non-negative function~$\xi$
 by~$\xi^2 := \xi_1^2+\xi_2^2$. We have
 \[
  \Delta(\xi^2/2) = \xi_1\Delta\xi_1 + \xi_2\Delta\xi_2
  + \abs{\nabla\xi_1}^2 + \abs{\nabla\xi_2}^2
 \]
 and hence, thanks to~\eqref{hardy31},
 \[
  \Delta(\xi^2/2) + \frac{2}{q_3}\nabla q_3\cdot
  \left(\xi_1\nabla\xi_1 + \xi_2\nabla\xi_2\right)
  = \frac{\lambda^2B}{3L} (q_1\xi_1 + q_2\xi_2)\left(\xi^2 - 9\right)
  + \abs{\nabla\xi_1}^2 + \abs{\nabla\xi_2}^2.
 \]
 Finally, we obtain 
 \begin{equation} \label{hardy32}
  \Delta(\xi^2/2) + \frac{2}{q_3}\nabla q_3\cdot\nabla(\xi^2/2)
  \geq \underbrace{-\frac{\lambda^2B}{3L} \frac{q_1^2 + q_2^2}{q_3}}_{\geq 0}
  \left(\xi^2 - 9\right).
 \end{equation}
 From the boundary conditions~\eqref{eq:BC123}, we know
 that~$\xi = \xi_1\leq 3$ on~$\partial\Omega$. 
 Then, the (strong) maximum principle applied to the differential
 inequality~\eqref{hardy32} implies that $\xi^2 < 9$
 everywhere inside~$\Omega$. Thus, the lemma follows.
\end{proof}

We define
\[
 s_- := \frac{B - \sqrt{B^2 + 24|A| C}}{4C}<0.
\]
In the following propositions, we prove bounds on~$q_3$, in terms of~$s_+$
(see~\eqref{eq:s+}) and~$s_-$.
%These bounds will depend on the regime of temperature we choose.
%For now, let us assume that $-\frac{B^2}{3C} \leq A  < 0$, then the following inequalities hold:
%\begin{gather*}
%%  |A|\leq\frac{B^2}{3C}\\\implies 
% -2s_{-}\leq s_{+}=\frac{B+\sqrt{B^2+24|A|C}}{4C}\leq \frac{B}{C}
%\end{gather*}
%and, hence,
%\begin{equation}\label{ineq1}
% \frac{s_{-}}{3}\geq-\frac{s_{+}}{6}\geq-\frac{B}{6C}. 
%\end{equation}

\begin{proposition} \label{prop:bounds-highA}
 Let $-\frac{B^2}{3C} \leq A  < 0$ so that
 \begin{equation}\label{ineq1}
 \frac{s_{-}}{3}\geq-\frac{s_{+}}{6}\geq-\frac{B}{6C}. 
\end{equation} Let $(q_1, \, q_2, \, q_3)$ 
 be any solution of the PDE system~\eqref{eq:EL123},
 satisfying the boundary conditions~\eqref{eq:BC123},
 with $q_3<0$ in $\Omega$. Then
 \begin{equation} \begin{split}
 -\frac{s_{+}}{6}\leq q_3\leq\frac{s_{-}}{3} \quad \text{in}\quad\Omega.
 \end{split} \end{equation}
\end{proposition}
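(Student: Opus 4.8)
The plan is to prove the two-sided bound on $q_3$ by a maximum-principle argument applied to the third equation of the system~\eqref{eq:EL123}, using the inputs we already have: Lemma~\ref{lemma:maxprinciple} (which controls $q_1^2+q_2^2+3q_3^2$) and Lemma~\ref{lemma:Hardy123} (which gives $q_1^2+q_2^2<9q_3^2$). The rough idea is that once these two pointwise constraints are available, the forcing term in the $q_3$-equation has a sign that is incompatible with $q_3$ attaining a value outside $[-s_+/6,\,s_-/3]$ at an interior maximum or minimum.

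First I would establish the lower bound $q_3\geq-s_+/6$. Note the boundary datum is $q_3=-s_+/6$ on $\partial\Omega$, so it suffices to rule out an interior minimum with value $<-s_+/6$. At such a point $x_0$ we would have $\Delta q_3(x_0)\geq 0$, so from the third equation
\[
 q_3\left\{A - Bq_3 + 2C(q_1^2+q_2^2+3q_3^2)\right\} + \frac{B}{3}(q_1^2+q_2^2)\geq 0 \quad\text{at }x_0.
\]
Using $q_1^2+q_2^2<9q_3^2$ from Lemma~\ref{lemma:Hardy123}, the last term is at most $3Bq_3^2$, and I would bound $q_1^2+q_2^2+3q_3^2$ from above by $\tfrac13 s_+^2$ via Lemma~\ref{lemma:maxprinciple} (being careful about the sign of $q_3$, which is negative, when multiplying); combining these should force the bracketed quantity, evaluated at a point where $q_3<-s_+/6$, to have the wrong sign, giving a contradiction. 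For the upper bound $q_3\leq s_-/3$ I would argue symmetrically at an interior maximum, where $\Delta q_3(x_0)\leq 0$; here the hypothesis~\eqref{ineq1} that $-s_+/6\leq s_-/3$ guarantees the boundary value $-s_+/6$ already lies below $s_-/3$, so again it is enough to exclude an interior maximum exceeding $s_-/3$. The number $s_-/3$ is chosen precisely as the relevant root: recall $s_\pm$ are the roots of $2Cs^2 - Bs/2 - \tfrac12|A|\cdot(\text{something})$-type polynomials, so $s_-/3$ should make $A - B(s_-/3) + 2C\cdot 3(s_-/3)^2$ vanish or change sign, which is what makes the estimate tight.

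The step I expect to be the main obstacle is getting the inequalities to close cleanly in the upper-bound case: there I must combine the (negative) sign of $q_3$, the quadratic constraint from Lemma~\ref{lemma:maxprinciple}, and the ratio bound from Lemma~\ref{lemma:Hardy123} in a way that leaves no slack at $q_3=s_-/3$, and I suspect the algebra requires using $q_1^2+q_2^2<9q_3^2$ rather than the weaker Lemma~\ref{lemma:maxprinciple} bound in at least one place — identifying exactly which term needs which estimate is the delicate part. It is also worth checking whether a genuine interior extremum argument suffices or whether one needs the full strong maximum principle on the linear operator $\Delta q_3 - c(x)q_3 - (\text{forcing})$ after rewriting the equation in the form $\Delta q_3 = q_3\,V(x) + W(x)$ with $V,W$ of controlled sign on the putative bad region; I would phrase it via the latter to handle the possibility that the extremum is attained on a set rather than a point. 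Once the sign analysis is arranged, the conclusion is immediate.
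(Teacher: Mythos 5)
Your overall framework (interior extremum argument on the third equation of~\eqref{eq:EL123}, with $s_-/3$ and $-s_+/6$ appearing as roots of the relevant cubics) matches the paper's, and your treatment of the upper bound $q_3\le s_-/3$ is essentially correct — indeed that case is the easy one: at an interior maximum with $q_3>s_-/3$ one has $q_3\,(A-Bq_3+6Cq_3^2)>0$ because $A-Bt+6Ct^2=6C(t-s_+/3)(t-s_-/3)<0$ there, and $(2Cq_3+B/3)(q_1^2+q_2^2)\ge 0$ because $q_3>s_-/3\ge -B/(6C)$ by~\eqref{ineq1}; neither Lemma~\ref{lemma:Hardy123} nor Lemma~\ref{lemma:maxprinciple} is needed. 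However, your plan for the sharp lower bound has a genuine gap, and you have in fact pointed at it yourself without resolving it. Writing $\sigma:=q_1^2+q_2^2$ and grouping the equation as $\Delta q_3=\frac{\lambda^2}{L}\bigl[Aq_3-Bq_3^2+6Cq_3^3+(2Cq_3+\tfrac{B}{3})\sigma\bigr]$, the obstruction is the sign of the coefficient $2Cq_3+\tfrac{B}{3}$: only when it is nonnegative can the Hardy-type bound $\sigma<9q_3^2$ of Lemma~\ref{lemma:Hardy123} be inserted as an \emph{upper} bound on the whole $\sigma$-term. Your proposed substitute — bounding $\sigma+3q_3^2\le s_+^2/3$ via Lemma~\ref{lemma:maxprinciple} and multiplying by the negative $q_3$ — produces a \emph{lower} bound on $2Cq_3(\sigma+3q_3^2)$, which cannot be combined with your upper bound $\tfrac{B}{3}\sigma\le 3Bq_3^2$ into a one-sided estimate of the right-hand side; Lemma~\ref{lemma:maxprinciple} is in fact not used in the paper's proof of this proposition at all. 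Moreover, applying Lemma~\ref{lemma:Hardy123} only to the $\tfrac{B}{3}\sigma$ term and estimating the $2Cq_3\sigma$ term crudely leads to the cubic $Aq_3+2Bq_3^2+6Cq_3^3$, which is \emph{positive} at $q_3=-s_+/6$, so the contradiction fails.

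The missing idea is an intermediate step: first prove the weaker bound $q_3\ge -B/(6C)$ by a separate minimum-principle argument (at an interior minimum with $q_3<-B/(6C)$ both $q_3(A-Bq_3+6Cq_3^2)<0$ and $(2Cq_3+\tfrac{B}{3})\sigma\le 0$, contradicting $\Delta q_3\ge 0$). Once $2Cq_3+\tfrac{B}{3}\ge 0$ is known everywhere, Lemma~\ref{lemma:Hardy123} gives $(2Cq_3+\tfrac{B}{3})\sigma\le 9q_3^2(2Cq_3+\tfrac{B}{3})$, hence $\Delta q_3\le \frac{\lambda^2}{L}\bigl(Aq_3+2Bq_3^2+24Cq_3^3\bigr)$, and the cubic on the right is strictly negative precisely for $q_3<-s_+/6$ (its negative root is $-s_+/6$), which yields the contradiction at an interior minimum. (A case split on the sign of $2Cq_3+\tfrac{B}{3}$ at the minimum point would serve equally well.) Your concern about extrema attained on a set, and about needing the strong maximum principle, is unnecessary: since $q_3=-s_+/6$ on $\partial\Omega$, any violation of the bounds forces an interior extremum, and the contradiction obtained there is with a strict inequality.
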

\begin{proof}
 Firstly, we shall prove the upper bound 
 $q_3\leq s_{-}/3$ in $\Omega$. Assume for a contradiction, 
 that the maximum of $q_3$ is attained at some point 
 $(x_0, \, y_0)\in\Omega$ such that $q_3(x_0, \, y_0)>s_{-}/3$.
 Then, using~\eqref{ineq1}, the following inequalities hold:
 \begin{equation*} \begin{split}
  Aq_3(x_0,y_0)-Bq_3^2(x_0,y_0)+6Cq_3^3(x_0,y_0)
    &> A\left(-\frac{B}{6C}\right) + B\left(-\frac{B^2}{36C^2}\right)
    + 6C\left(\frac{-B^3}{216C^3}\right)\\
  &>\left(-\frac{B^2}{3C}\right)\left(-\frac{B}{6C}\right)-\frac{B^3}{18C^2}=0,\\
%   \text{and}\quad
 \end{split} \end{equation*}
 and
 \begin{equation*}
   2Cq_3(x_0,y_0)+\frac{B}{3}>2C\left(-\frac{B}{6C}\right)+\frac{B}{3}=0. 
 \end{equation*}
 We evaluate both sides of the equation for $q_3\in C^2(\Omega)$
 in~\eqref{eq:EL123} at the point~$(x_0, \, y_0)$:
 \begin{equation*} \begin{split}
  \underbrace{\Delta q_3(x_0,y_0)}_{\leq 0}
  &=\underbrace{\frac{\lambda^2}{L}\left\{Aq_3(x_0,y_0)-Bq_3^2(x_0,y_0)+6Cq_3^3(x_0,y_0)\right\}}_{>0} \\
  &+\underbrace{\frac{\lambda^2}{L}\left\{2Cq_3(x_0,y_0)+\frac{B}{3}\right\}
  \left(q_1^2(x_0,y_0)+q_2^2(x_0,y_0)\right)}_{\geq 0},
 \end{split} \end{equation*}
 which leads to a contradiction. Since~$q_3 = -s_+/6\leq s_-/3$ 
 on~$\partial\Omega$, we conclude that $q_3\leq s_-/3$ on~$\Omega$.

 Now let's prove the weaker lower bound $q_3\geq-\frac{B}{6C}$ 
 in $\Omega$. Assume for contradiction that the minimum of $q_3$ 
 is attained at some point $(x_1,y_1)\in\Omega$ such that
 $q_3(x_1,y_1)<-\frac{B}{6C}$. Then the following inequalities hold:
 \begin{equation*} \begin{split}
 Aq_3(x_1,y_1)-Bq_3^2(x_1,y_1)+6Cq_3^3(x_1,y_1)&<A\left(-\frac{B}{6C}\right)+B\left(-\frac{B^2}{36C^2}\right)+6C\left(\frac{-B^3}{216C^3}\right) \\
 &<\left(\frac{B^2}{3C}\right)\left(\frac{B}{6C}\right)-\frac{B^3}{18C^2}=0,
 \end{split} \end{equation*}
 and
 \begin{equation*}
  2Cq_3(x_1,y_1)+\frac{B}{3} < 2C\left(-\frac{B}{6C}\right)+\frac{B}{3}=0.
 \end{equation*}
Recalling the equation for $q_3\in C^2(\Omega)$ in~\eqref{eq:EL123} and the boundary conditions~\eqref{eq:BC123}, we get an immediate contradiction and obtain the lower bound
% \begin{equation} \begin{split}
% \underbrace{\Delta q_3(x_1,y_1)}_{\geq0}
% &= \underbrace{\frac{\lambda^2}{L}\left\{Aq_3(x_1,y_1)-Bq_3^2(x_1,y_1)+^Cq_3^3(x_1,y_1)\right\}}_{<0} \\
% &+ \underbrace{\frac{\lambda^2}{L}\left\{2Cq_3(x_1,y_1)+\frac{B}{3}\right\}
% \left(q_1^2(x_1,y_1)+q_2^2(x_1,y_1)\right)}_{\leq 0},
% \end{split} \end{equation}
% which leads to a contradiction. 
$q_3\geq-\frac{B}{6C}$.
 
 We are now ready to prove the optimal lower bound 
 $q_3\geq-\frac{s_{+}}{6}$ in $\Omega$. 
 Recalling Lemma~\ref{lemma:Hardy123} and $q_3\geq-\frac{B}{6C}$, we have that:
 \begin{equation}\label{eq:MC}
  \begin{split}
 \Delta q_3 &=\frac{\lambda^2}{L}\left\{Aq_3-Bq_3^2+6Cq_3^3\right\}+\frac{\lambda^2}{L}\left\{2Cq_3+\frac{B}{3}\right\}(q_1^2+q_2^2) \\
 &\leq \frac{\lambda^2}{L}\left\{Aq_3-Bq_3^2+6Cq_3^3\right\}+\frac{\lambda^2}{L}\left\{2Cq_3+\frac{B}{3}\right\}9q_3^2 \\
 &\leq \frac{\lambda^2}{L}\left\{Aq_3+2Bq_3^2+24Cq_3^3\right\} \quad \text{in}\quad\Omega.
  \end{split}
 \end{equation}
 Assume for a contradiction, that the minimum of $q_3$ is attained at some point $(x_2, \, y_2)\in\Omega$ such that $q_3(x_2,y_2)<-\frac{s_{+}}{6}$. Then, using (\ref{ineq1}), the following inequality holds:
 \begin{equation} \begin{split}
 Aq_3(x_2,y_2)+2Bq_3^2(x_2,y_2)+24Cq_3^3(x_2,y_2)&<A\left(-\frac{s_{+}}{6}\right)-2B\left(-\frac{s_{+}^2}{36}\right)+24C\left(\frac{-s_{+}^3}{216}\right)\nonumber \\
 &<\left(\frac{B^2}{3C}\right)\left(\frac{s_{+}}{6}\right)+2B\left(\frac{B^2}{36C^2}\right)+24C\left(-\frac{s_{+}^3}{216}\right) \nonumber\\
 &<\frac{B^3}{9C^2}-\frac{B}{9}\left(\frac{B^2}{C}\right)=0,
 \end{split} \end{equation}
 which when combined with the equation~\eqref{eq:MC}, yields $\Delta q_3(x_2, y_2)<0$.
 This is a contradiction, and the desired result follows.%we deduce that
% \begin{equation} \begin{split}
% \underbrace{\Delta q_3(x_2,y_2)}_{\geq0}&\leq\underbrace{\frac{\lambda^2}{L}\left\{Aq_3(x_2,y_2)+2Bq_3^2(x_2,y_2)+24Cq_3^3(x_2,y_2)\right\}}_{<0},
% \end{split} \end{equation}
% which leads to a contradiction.
\end{proof}

\begin{corollary} \label{cor:bounds}
 Assume that $A=-\frac{B^2}{3C}$ and let $(q_1, \, q_2, \, q_3)$ be a solution of~\eqref{eq:EL123}
 subject to boundary conditions~\eqref{eq:BC123} with $q_3<0$ in $\Omega$.
 Then $q_3 = -\frac{s_{+}}{6}$ in~$\Omega$.
\end{corollary}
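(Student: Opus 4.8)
The plan is to observe that the two bounds in Proposition~\ref{prop:bounds-highA} collapse onto each other precisely at the endpoint temperature $A = -B^2/(3C)$, so the corollary is immediate once we check the relevant arithmetic. First I would compute the values of $s_+$ and $s_-$ at this temperature. With $|A| = B^2/(3C)$ we get $B^2 + 24|A|C = B^2 + 8B^2 = 9B^2$, hence $\sqrt{B^2+24|A|C} = 3B$, and therefore $s_+ = (B+3B)/(4C) = B/C$ and $s_- = (B-3B)/(4C) = -B/(2C)$. Consequently $-s_+/6 = -B/(6C)$ and $s_-/3 = -B/(6C)$, so the chain~\eqref{ineq1} holds with \emph{equality}: $s_-/3 = -s_+/6 = -B/(6C)$.

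Next I would invoke Proposition~\ref{prop:bounds-highA}. Its hypotheses are satisfied: $A = -B^2/(3C)$ is the extreme admissible value in the range $-B^2/(3C) \le A < 0$, the chain~\eqref{ineq1} holds (indeed with equality, as just checked), and by assumption $(q_1, q_2, q_3)$ solves~\eqref{eq:EL123} with boundary conditions~\eqref{eq:BC123} and $q_3 < 0$ in $\Omega$. The proposition then yields
\[
 -\frac{s_+}{6} \le q_3 \le \frac{s_-}{3} \qquad \text{in } \Omega.
\]
Since the left- and right-hand sides are equal to $-B/(6C) = -s_+/6$, these inequalities force $q_3 \equiv -s_+/6$ throughout $\Omega$, which is the claim.

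There is essentially no obstacle here: the corollary is a direct consequence of Proposition~\ref{prop:bounds-highA}, and the only point requiring care is the elementary verification that at $A = -B^2/(3C)$ one has $s_-/3 = -s_+/6$, i.e. that the upper and lower bounds provided by the proposition degenerate to a single value. (One may also remark, as a sanity check, that $q_3 \equiv -s_+/6$ is indeed consistent with the boundary datum $q_3 = -s_+/6$ on $\partial\Omega$ in~\eqref{eq:BC123}, and that in this case the $q_3$-equation in~\eqref{eq:EL123} reduces to an algebraic identity together with the vanishing of the coefficient $2Cq_3 + B/3 = -B/3 + B/3 = 0$ multiplying $q_1^2 + q_2^2$, so the first two equations of~\eqref{eq:EL123} decouple into a single Allen--Cahn-type equation for $(q_1,q_2)$, recovering the WORS reduction of~\cite{canevari2017order} when $q_2 \equiv 0$.)
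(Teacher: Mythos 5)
Your proposal is correct and is exactly the argument the paper intends: the corollary is stated without proof immediately after Proposition~\ref{prop:bounds-highA} precisely because at $A=-B^2/(3C)$ one has $s_+=B/C$, $s_-=-B/(2C)$, hence $s_-/3=-s_+/6$, and the two-sided bound collapses to the single value $q_3\equiv -s_+/6$. Your arithmetic check and the concluding remark about the degeneration of the $q_3$-equation are both sound.
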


Finally, we have the following inequalities for $A<-\frac{B^2}{3C}$:
\begin{gather}
% |A|>\frac{B^2}{3C}\\
% \implies -2s_{-}>s_{+}=\frac{B+\sqrt{B^2+24|A|C}}{4C}> \frac{B}{C} \\
\frac{s_{-}}{3}<-\frac{s_{+}}{6}<-\frac{B}{6C}. \label{ineq2}
\end{gather}
% where $s_{-}=\frac{B-\sqrt{B^2+24|A|C}}{4C}<0$.

\begin{proposition} \label{prop:bounds-lowA}
 Assume that $A<-\frac{B^2}{3C}$. Let $(q_1, \, q_2, \, q_3)$ be any solution of the
 PDE system~\eqref{eq:EL123} satisfying ~\eqref{eq:BC123}
 with $q_3<0$ in $\Omega$. Then
 \begin{equation}
 \frac{s_{-}}{3}\leq q_3\leq-\frac{s_{+}}{6} \quad \text{in}\quad\Omega.
 \end{equation}
\end{proposition}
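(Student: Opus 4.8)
The plan is to mimic the three-step structure of Proposition~\ref{prop:bounds-highA}, reversing the inequalities wherever the sign of $A+\tfrac{B^2}{3C}$ matters. First I would establish the lower bound $q_3\geq s_-/3$ in $\Omega$ by a maximum-principle argument: suppose for contradiction that $q_3$ attains a minimum at some interior point $(x_0,y_0)$ with $q_3(x_0,y_0)<s_-/3$. Since $s_-/3$ and $-B/6C$ are roots of the cubic $t\mapsto At-Bt^2+6Ct^3$ (indeed $At-Bt^2+6Ct^3 = 6Ct(t-s_-/3)(t-s_+/3)$, and by~\eqref{ineq2} the relevant ordering is $s_-/3<-s_+/6<-B/6C<0<s_+/3$), one checks that at a point below $s_-/3$ both bracketed factors in the $q_3$-equation of~\eqref{eq:EL123} have a definite sign: $Aq_3-Bq_3^2+6Cq_3^3<0$ and $2Cq_3+B/3<0$ (the latter because $q_3<s_-/3<-B/6C$). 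Hence $\Delta q_3(x_0,y_0)<0$, contradicting that it is an interior minimum; together with $q_3=-s_+/6\geq s_-/3$ on $\partial\Omega$ (which is~\eqref{ineq2}) this gives $q_3\geq s_-/3$ throughout.

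Next I would prove the weaker upper bound $q_3\leq -B/6C$ (note $-B/6C$ is now \emph{above} $-s_+/6$, so this really is weaker). Assume for contradiction a maximum at $(x_1,y_1)$ with $q_3(x_1,y_1)>-B/6C$; there the cubic $Aq_3-Bq_3^2+6Cq_3^3$ is positive (one is to the right of the largest negative root $-B/6C$ but still negative, so below $0$ — careful bookkeeping of the factorization is needed here, using $A<-B^2/3C$) and $2Cq_3+B/3>0$, forcing $\Delta q_3(x_1,y_1)>0$, a contradiction with the boundary value $-s_+/6<-B/6C$. With $q_3\leq -B/6C$ in hand, I would then upgrade to the optimal bound $q_3\leq -s_+/6$ exactly as in the final step of Proposition~\ref{prop:bounds-highA}: use Lemma~\ref{lemma:Hardy123} ($q_1^2+q_2^2<9q_3^2$) together with $2Cq_3+B/3<0$ — which now holds because $q_3\leq -B/6C$ — to deduce from the $q_3$-equation the \emph{reverse} differential inequality
\begin{equation*}
 \Delta q_3 \geq \frac{\lambda^2}{L}\left(Aq_3 - Bq_3^2 + 6Cq_3^3\right) + \frac{\lambda^2}{L}\left(2Cq_3 + \frac{B}{3}\right)9q_3^2 = \frac{\lambda^2}{L}\left(Aq_3 + 2Bq_3^2 + 24Cq_3^3\right) \quad \text{in } \Omega,
\end{equation*}
since multiplying the negative quantity $2Cq_3+B/3$ by the larger nonnegative quantity $9q_3^2$ (rather than $q_1^2+q_2^2$) decreases it. Then assuming a maximum of $q_3$ exceeds $-s_+/6$ at $(x_2,y_2)$ and using that $-s_+/6$ is a root of $t\mapsto At+2Bt^2+24Ct^3$ one more time (the cubic $At+2Bt^2+24Ct^3 = 24Ct(t+s_+/6)(t+s_-/6)$, say), with the ordering from~\eqref{ineq2}, the right-hand side above is positive at $(x_2,y_2)$, giving $\Delta q_3(x_2,y_2)>0$ and the contradiction.

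The main obstacle is purely a matter of sign bookkeeping: the hypothesis $A<-B^2/3C$ flips the ordering~\eqref{ineq2} relative to~\eqref{ineq1}, so every inequality direction in the three steps must be re-examined, and in particular one has to be sure which of $s_-/3$, $-s_+/6$, $-B/6C$ is a genuine root of which cubic and on which side of that root $q_3$ lands. The two cubics $At-Bt^2+6Ct^3$ and $At+2Bt^2+24Ct^3$ each factor with one root at $0$ and two negative roots; matching $s_-/3,-B/6C$ to the first and $-s_+/6$ (and a companion) to the second, and verifying the strict separations in~\eqref{ineq2}, is the only place where a slip is likely. Everything else — the interior-extremum/elliptic reasoning and the use of Lemma~\ref{lemma:Hardy123} — is verbatim the same as in Proposition~\ref{prop:bounds-highA}, so I would simply write ``arguing as in Proposition~\ref{prop:bounds-highA}, with the inequalities reversed'' for the routine parts and spell out only the sign checks that differ.
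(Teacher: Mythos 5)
Your proposal follows exactly the three-step scheme that the paper's remark after Proposition~\ref{prop:bounds-lowA} prescribes (first the lower bound $q_3\geq s_-/3$, then the weak upper bound $q_3\leq -B/(6C)$, then the sharp bound $q_3 \leq -s_+/6$ via Lemma~\ref{lemma:Hardy123} and the reversed differential inequality), and every sign conclusion you actually use is correct --- indeed your factorizations $At-Bt^2+6Ct^3=6Ct\left(t-s_-/3\right)\left(t-s_+/3\right)$ and $At+2Bt^2+24Ct^3=24Ct\left(t+s_+/6\right)\left(t+s_-/6\right)$ make the checks more transparent than the paper's direct evaluations in Proposition~\ref{prop:bounds-highA}. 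The only blemishes are in your side commentary, not in the argument: $-B/(6C)$ is not a root of the first cubic (it is the zero of the linear factor $2Ct+B/3$), and each cubic has one negative and one positive nonzero root rather than ``two negative roots''; neither slip affects the conclusions you draw on the relevant intervals.
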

\textbf{Remark:} The proof of Proposition~\ref{prop:bounds-lowA} is completely analogous to
that of Proposition~\ref{prop:bounds-highA}. We first prove the lower bound $q_3\geq s_-/3$,
then the weaker upper bound $q_3\leq-\frac{B}{6C}$, and finally the sharp upper bound $q_3\leq -s_+/6$.
Each step is obtained by repeating almost word by word the arguments of Proposition~\ref{prop:bounds-highA}.
We omit the details for brevity. $\Box$

\subsubsection{Stability/Instability of the WORS with natural boundary conditions.} 

We first recall a result from \cite{canevari2017order} that ensures that the WORS is globally stable with natural boundary conditions on $\Gamma$, for arbitrary well heights or all values of $\epsilon$.

\begin{lemma} \label{lemma:uniqueness-s}
 For any $A < 0$, there exists~$\lambda_0 > 0$ (depending only
 on~$A$, $B$, $C$, $L$) such that, for~$\lambda < \lambda_0$, the
 functional~\eqref{eq:2D-LdG} has a unique critical point 
 in the class~\eqref{eq:2D-admissible}.
\end{lemma}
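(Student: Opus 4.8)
The plan is to establish uniqueness of critical points of the two-dimensional functional $I$ in $\mathcal{B}'$ for small $\lambda$ by a convexity-type argument, exploiting the smallness of the Poincar\'e constant relative to the (bounded) nonlinearity. First I would observe that, by Lemma~\ref{lemma:maxprinciple} (or rather its scalar analogue for the system~\eqref{eq:EL123}, which follows from the maximum principle for the full $\Qvec$-system as in~\cite[Proposition~3]{majumdar2010landau}), every critical point~$\Qvec$ of~$I$ in~$\mathcal{B}'$ satisfies a uniform $L^\infty$ bound, $\abs{\Qvec}\le M$, with $M$ depending only on~$A$, $B$, $C$; equivalently $\abs{\Qvec}^2\le 2s_+^2/3$. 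Consequently the bulk potential $f_b$ restricted to the ball $\{\abs{\Qvec}\le M\}$ is a fixed polynomial with a uniform bound on its Hessian, say $\abs{D^2 f_b(\Qvec)}\le K$ for $\abs{\Qvec}\le M$, where $K = K(A,B,C)$.

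Next, suppose $\Qvec^{(1)}$ and $\Qvec^{(2)}$ are two critical points of~$I$ in~$\mathcal{B}'$. Both satisfy the same Dirichlet data on~$\partial\Omega$, so $W := \Qvec^{(1)} - \Qvec^{(2)}\in W^{1,2}_0(\Omega, S_0)$. Subtracting the two weak Euler--Lagrange equations and testing with~$W$ itself gives
\begin{equation*}
 \int_\Omega \abs{\nabla W}^2 \, \d S
 = -\frac{\lambda^2}{L}\int_\Omega \bigl(\nabla_\Qvec f_b(\Qvec^{(1)}) - \nabla_\Qvec f_b(\Qvec^{(2)})\bigr)\cdot W \, \d S.
\end{equation*}
By the mean value inequality and the Hessian bound, the right-hand side is controlled by $\tfrac{\lambda^2 K}{L}\int_\Omega \abs{W}^2\,\d S$, and Poincar\'e's inequality on~$\Omega$ gives $\int_\Omega\abs{W}^2\,\d S\le C_P(\Omega)\int_\Omega\abs{\nabla W}^2\,\d S$. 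Therefore
\begin{equation*}
 \Bigl(1 - \frac{\lambda^2 K C_P(\Omega)}{L}\Bigr)\int_\Omega \abs{\nabla W}^2\,\d S \le 0,
\end{equation*}
so as soon as $\lambda^2 < L/(K C_P(\Omega)) =: \lambda_0^2$ we get $\nabla W\equiv 0$, hence $W\equiv 0$ by the zero boundary values, proving uniqueness. Since $K$ and $C_P(\Omega)$ depend only on the data (and $\Omega$ is fixed once $\eta$ is fixed), $\lambda_0$ depends only on $A$, $B$, $C$, $L$ as claimed.

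The main technical point — and the only place one must be slightly careful — is that one needs the $L^\infty$ bound \emph{before} one knows uniqueness, so that the Hessian of $f_b$ is genuinely bounded along \emph{all} critical points; this is exactly what the maximum-principle estimate provides, so the argument is self-contained. A minor alternative, avoiding even this, is to note that for critical points of~$I$ the comparison function $\min\{\abs{\Qvec}, M\}$-type truncation shows criticial points land in the convex set $\{\abs{\Qvec}\le M\}$, on which $\Qvec\mapsto I[\Qvec]$ becomes strictly convex once $\lambda$ is small (because adding $\tfrac{\lambda^2 K}{2L}\abs{\Qvec}^2$ to $f_b$ makes the integrand convex while the correction is dominated by the Dirichlet term via Poincar\'e); strict convexity then yields a unique critical point directly. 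Either route works; I would present the subtraction-and-test version since it is the shortest. I expect essentially no obstacle here: this is the standard "small domain / small parameter" uniqueness result, and the cross-reference to~\cite{canevari2017order} suggests the authors simply invoke it.
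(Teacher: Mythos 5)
Your argument is correct, and it is worth noting that the paper does not actually write out a proof of this lemma: it simply invokes a general uniqueness criterion for functionals of the form~\eqref{eq:LdG} (citing~\cite[Lemma~8.2]{lamy2014} and~\cite[Lemma~3.2]{canevari2017order}), whose mechanism is spelled out in this paper only for the 3D surface-energy case, in the proof of Lemma~\ref{lemma:strictly_convex}. That mechanism is your ``minor alternative'': an a priori $L^\infty$ bound on critical points (the full-tensor maximum principle of~\cite[Proposition~3]{majumdar2010landau}, as in Lemma~\ref{lemma:maxprinciple3d}) places every critical point in a convex set $\{\abs{\Qvec}\le M\}$ on which the functional becomes strictly convex for small $\lambda$, via the midpoint inequality, the Poincar\'e inequality for the difference (which vanishes on $\partial\Omega$), and a $W^{2,\infty}$ bound on $f_b$ over $B_M$; strict convexity then forbids two critical points. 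Your primary route --- subtracting the two weak Euler--Lagrange equations, testing with $W=\Qvec^{(1)}-\Qvec^{(2)}\in W^{1,2}_0(\Omega,S_0)$, bounding the nonlinear difference by the Hessian of $f_b$ on $B_M$, and absorbing via Poincar\'e --- uses exactly the same two ingredients but packages them as an energy identity for the difference rather than as convexity of the functional; it is marginally shorter and avoids the (mild) bookkeeping of arguing that a strictly convex functional on a convex set has at most one critical point. One small point of care, which you essentially address: the $L^\infty$ bound must be established for \emph{all} critical points of $I$ in the full class $\mathcal{B}^\prime$ (five degrees of freedom), not just for solutions of the reduced system~\eqref{eq:EL123}, so the correct reference is the maximum principle for the full system~\eqref{eq:EL} rather than Lemma~\ref{lemma:maxprinciple}; and, as in the paper, $\lambda_0$ inevitably also depends on $\Omega$ through the Poincar\'e constant, which is harmless since $\Omega$ is fixed.
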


This result follows from a general uniqueness criterion for critical
points of functionals of the form~\eqref{eq:LdG}; see, e.g., 
\cite[Lemma~8.2]{lamy2014}, \cite[Lemma~3.2]{canevari2017order}. The WORS exists for all $\lambda$ and $A<0$ and an immediate consequence is that the WORS is the unique LdG energy minimizer for sufficiently small $\lambda$.

%We consider now solutions of the system~\eqref{eq:EL123}
%that have~$q_2 = 0$ everywhere on~$\Omega$. Given such a solution~$(q_1, q_2 = 0,\, q_3)$,
%the corresponding tensor $\Qvec$, defined by~\eqref{Q-ansatz},
%is a LdG critical point with constant eigenframe; 
%its eigenvectors are parallel to the edges of the square and to~$\zhat$.
%This class of solutions includes, for example, the WORS, which
%is the unique LdG critical point in the full space of $\Qvec$-tensors 
%% and, hence, is globally stable
%for~$\lambda$ small enough.

We now study the instability of the WORS 
with respect to in-plane perturbations of the eigenframe and these perturbations necessarily have a non-zero $q_2$ component,
when~$\lambda$ is large and~$A$ is low enough. To this end, we take a 
function~$\varphi\in C^1_{\mathrm{c}}(\Omega)$ and consider the perturbation
\[
 \Qvec_t(x, \, y) := \Qvec(x, \, y) + t\varphi(x, \, y) 
 \left(\nvec_1\otimes\nvec_2 + \nvec_2\otimes\nvec_1 \right) \!,
\]
where~$\nvec_1$, $\nvec_2$ are defined by~\eqref{eq:n12} 
and~$t\in\R$ is a small parameter. We compute the second
variation of the LdG energy ~\eqref{eq:2D-LdG}, about the WORS solution as discussed in Proposition~\ref{prop:WORS}:
\begin{equation} \label{H}
 H_\lambda[\varphi] := \frac{1}{2}\frac{\d^2}{\d t^2} I[\Qvec_t]_{|t=0} =
 \int_{\Omega} \left(\abs{\nabla\varphi}^2 + \frac{\lambda^2}{L} \varphi^2 
 \left(A + 2 Bq_{3} + 2C(q_{1}^2 + 3 q_{3}^2)\right)\right) \d S
\end{equation}
(see \cite[Section~5.3]{wang2018order}).

\begin{proposition} \label{prop:unstable}
 Let $A\leq -\frac{B^2}{3C}$. Let~$(q_1, \, q_2, \, q_3)$ be 
 a solution of~\eqref{eq:EL123}, subject to the boundary conditions~\eqref{eq:BC123},
 such that $q_2 = 0$ and~$q_3 < 0$ everywhere in~$\Omega$, such as the WORS-solution constructed in Proposition~\ref{prop:WORS}.
 For any function~$\varphi\in C^1_{\mathrm{c}}(\Omega)$
 that is not identically equal to zero, there exists a 
 number~$\lambda_0>0$ (depending on~$A$, $B$, $C$, $L$ and~$\varphi$) such that
 $H_\lambda[\varphi]<0$ when~$\lambda\geq\lambda_0$.
\end{proposition}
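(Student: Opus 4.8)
The plan is to reduce the statement to the pointwise negativity in $\Omega$ of the zeroth-order coefficient appearing in the quadratic form $H_\lambda$, and then to conclude by a scaling argument in $\lambda$. Writing
\[
 W(x,\,y) := A + 2Bq_3(x,\,y) + 2C\bigl(q_1^2(x,\,y) + 3q_3^2(x,\,y)\bigr),
\]
formula \eqref{H} reads $H_\lambda[\varphi] = \int_\Omega\bigl(\abs{\nabla\varphi}^2 + \tfrac{\lambda^2}{L}\,\varphi^2\,W\bigr)\,\d S$. I claim $W<0$ everywhere in $\Omega$. Granting this, the support $K$ of a non-trivial $\varphi\in C^1_{\mathrm c}(\Omega)$ is compact in $\Omega$ and $W$ is continuous there ($q_1$, $q_3$ being continuous), so $W\le -c$ on $K$ for some $c>0$, and therefore
\[
 H_\lambda[\varphi] \le \|\nabla\varphi\|_{L^2(\Omega)}^2 - \frac{c\,\lambda^2}{L}\,\|\varphi\|_{L^2(\Omega)}^2 ,
\]
which is negative as soon as $\lambda^2 > L\,\|\nabla\varphi\|_{L^2}^2 \,/\,\bigl(c\,\|\varphi\|_{L^2}^2\bigr)$; since $\varphi\not\equiv 0$ we have $\|\varphi\|_{L^2}>0$, and this threshold gives the required $\lambda_0$ (depending on $\varphi$, on $L$, and through $c$ on $A$, $B$, $C$ and the solution).

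To prove $W<0$ I would use the a priori bounds already established, crucially exploiting that $q_2\equiv 0$. From Lemma~\ref{lemma:maxprinciple} one has $q_1^2 + 3q_3^2 \le s_+^2/3$; since $s_+$ (see \eqref{eq:s+}) solves $2Cs^2 - Bs + 3A = 0$, one can rewrite $\tfrac{2C}{3}s_+^2 = \tfrac{B}{3}s_+ - A$, which yields the first bound
\[
 W \le A + 2Bq_3 + \tfrac{2C}{3}s_+^2 = 2B\Bigl(q_3 + \tfrac{s_+}{6}\Bigr).
\]
From Lemma~\ref{lemma:Hardy123} one has $q_1^2 < 9q_3^2$, which gives the second bound
\[
 W < A + 2Bq_3 + 24Cq_3^2 = 24C\Bigl(q_3 + \tfrac{s_+}{6}\Bigr)\Bigl(q_3 + \tfrac{s_-}{6}\Bigr),
\]
the factorisation holding because $-s_+/6$ and $-s_-/6$ are exactly the roots of $24Ct^2 + 2Bt + A$. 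Finally, since $A\le -B^2/(3C)$, Corollary~\ref{cor:bounds} (when $A = -B^2/(3C)$) or Proposition~\ref{prop:bounds-lowA} (when $A<-B^2/(3C)$) gives $q_3\le -s_+/6$ throughout $\Omega$.

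Now I combine these three facts pointwise. Suppose $W(x_0,\,y_0)\ge 0$ for some $(x_0,\,y_0)\in\Omega$. The first bound together with $B>0$ forces $q_3(x_0,\,y_0) + s_+/6\ge 0$, which combined with $q_3\le -s_+/6$ forces $q_3(x_0,\,y_0) = -s_+/6$; but then the second bound reads $W(x_0,\,y_0)<0$, a contradiction. Hence $W<0$ in $\Omega$, which completes the proof.

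I expect the pointwise bound $W<0$ to be the only genuinely delicate point, and specifically the fact that the two a priori estimates must be combined: Lemma~\ref{lemma:maxprinciple} by itself only gives $W\le 0$, while the Lemma~\ref{lemma:Hardy123} bound is vacuous wherever $q_3<-s_+/6$ (the product on its right-hand side is then strictly positive), so one is forced to argue through the dichotomy $q_3 = -s_+/6$ versus $q_3 < -s_+/6$. Everything else — the algebraic identity for $s_+$, the quadratic factorisation, and the compactness/scaling argument in the last paragraph — is routine.
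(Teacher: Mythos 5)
Your proof is correct and follows essentially the same route as the paper's: both combine Lemma~\ref{lemma:maxprinciple}, the bound $q_3\le -s_+/6$ (Proposition~\ref{prop:bounds-lowA}, or Corollary~\ref{cor:bounds} when $A=-B^2/(3C)$), and Lemma~\ref{lemma:Hardy123} to show that the zeroth-order coefficient $A+2Bq_3+2C(q_1^2+3q_3^2)$ is strictly negative throughout $\Omega$, and then conclude by letting $\lambda\to\infty$. The only cosmetic difference is in ruling out the equality case: the paper identifies it as $|q_1|=s_+/2$, $q_3=-s_+/6$ and excludes it via $|q_1|<3|q_3|$, whereas you use the factorisation of $A+2Bq_3+24Cq_3^2$; these are equivalent.
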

\begin{proof}
 Due to Lemma~\ref{lemma:maxprinciple} and Proposition~\ref{prop:bounds-lowA}, we have
 \begin{equation*} \begin{split}
 A+2Bq_3+2C(q_1^2 + 3q_3^2)
 &\leq A-\frac{Bs_{+}}{3}+\frac{2Cs_{+}^2}{3} \\
 &=A-\frac{B}{3}\left(\frac{B+\sqrt{B^2+24|A|C}}{4C}\right)+\frac{2C}{3}\left(\frac{2B^2+2B\sqrt{B^2+24|A|C}+24|A|C}{16C^2}\right) \\
 &=A+|A|=0.
 \end{split} \end{equation*}
 The equality holds if and only if $q_{3} = -s_+/6$ and~$q_{1}^2 + 3q_{3}^2 = s_+^2/3$,
 that is, if and only if $\abs{q_{1}} = s_+/2$ and~$q_{3} = -s_+/6$. However,
 from Lemma~\ref{lemma:Hardy123} we know that $3q_{3} < q_{1} < -3q_{3}$
 inside~$\Omega$, so we must have
 \[
   A + 2Bq_{3} + 2C(q_{1}^2 + 3q_{3}^2) < 0 \qquad
   \textrm{everywhere inside } \Omega.
 \]
 Then, for any fixed~$\varphi\in C^\infty_{\mathrm{c}}(\Omega)$
 that is not identically equal to zero, the quantity~$H_\lambda[\varphi]$
 defined by~\eqref{H} becomes strictly negative for~$\lambda$ large enough.
\end{proof}

\subsection{Surface anchoring on the top and bottom plates}
\label{sect:surface}

%\subsection{The free energy functional and the Euler-Lagrange system}

In this section, we consider more experimentally relevant 
boundary conditions on the top and bottom plates, 
$\Gamma:= \Omega \times\{0, \, \epsilon\}$.
Instead of natural boundary conditions, we impose surface
energies on~$\Gamma$. The free energy functional,
in dimensionless units, becomes
\begin{equation} \label{eq:LdG-s}
 \mathcal{F}_\lambda[\Qvec] := 
 \int_V \left(\frac{1}{2}\abs{\nabla\Qvec}^2 
 + \frac{\lambda^2}{L}f_b(\Qvec)\right) \d V
 + \frac{\lambda}{L} \int_{\Gamma} f_s(\Qvec) \, \d S,
\end{equation}
and~$f_s$ is the surface anchoring energy density defined by
\cite{OsipovHess, Sluckin, SenSullivan, golovaty2017dimension}
\begin{equation} \label{eq:fs}
 f_s(\Qvec) := \alpha_z \left(\Qvec\zhat\cdot\zhat + \frac{s_+}{3}\right)^2
 + \gamma_z \abs{(\Ivec - \zhat\otimes\zhat)\Qvec\zhat}^2,
\end{equation}
where~$\alpha_z$ and~$\gamma_z$ are positive coefficients.
We remark that the second term in~\eqref{eq:fs}, $\gamma_z |(\Ivec - \zhat\otimes\zhat)\Qvec\zhat|^2$,
is equal to zero if and only if $\Qvec\zhat$ is parallel to~$\zhat$.
Therefore, the surface energy density~$f_s$ favours $\Qvec$-tensors that have~$\zhat$ as an eigenvector, with 
constant eigenvalue~$-s_+/3$, on the top and bottom plates. 
We have Dirichlet boundary conditions \eqref{eq:bc-Dir} 
on the lateral surface; and the admissible class is  $\mathcal{B}$,
defined by~\eqref{eq:admissible}.

\begin{lemma} \label{lemma:EL-surface}
 Critical points of the functional~\eqref{eq:LdG-s}, in the admissible
 class~$\mathcal{B}$ defined by~\eqref{eq:admissible},
 satisfy the Euler-Lagrenge system~\eqref{eq:EL},
%  \[
%   -\Delta\Qvec + \lambda^2 \left(A\Qvec - B\Qvec^2 
%    + \dfrac{B}{3}\abs{\Qvec}^2\Ivec 
%    + C\abs {\Qvec}^2\Qvec\right) = 0 \qquad \textrm{in } V
%  \]
 subject to Dirichlet boundary conditions~\eqref{eq:bc-Dir} on
 the lateral surfaces and 
 \begin{equation} \label{eq:bc-Neu}
  \partial_\nu\Qvec + \frac{\lambda}{L} \Gvec(\Qvec) = 0 
  \qquad \textrm{on } \Gamma.
 \end{equation}
 Here, $\nu$ is the outward-pointing unit normal to~$V$
%  ($\nu=\zhat$ on~$\Omega\times\{\epsilon\}$
%  and~$\nu=-\zhat$ on~$\Omega\times\{0\}$)
 and~$\Gvec$ is defined by
 \[
  \begin{split}
   \Gvec(\Qvec) := \left( \begin{matrix}
    -\dfrac{2}{3}\alpha_z\left(Q_{33} + \dfrac{s_+}{3}\right) & 0 
       & \gamma_z Q_{13} \\
    0 & -\dfrac{2}{3}\alpha_z\left(Q_{33} + \dfrac{s_+}{3}\right)
       & \gamma_z Q_{23} \\
    \gamma_z Q_{13} & \gamma_z Q_{23} & 
    \dfrac{4}{3}\alpha_z\left(Q_{33} + \dfrac{s_+}{3}\right)
   \end{matrix} \right) \! .
  \end{split}
 \]
\end{lemma}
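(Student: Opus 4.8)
The plan is to derive both the bulk equation and the boundary condition on~$\Gamma$ from the first variation of~$\mathcal{F}_\lambda$ in~\eqref{eq:LdG-s}. Let $\Qvec\in\mathcal{B}$ be a critical point and let $\Pvec\in W^{1,2}(V, S_0)$ be an arbitrary perturbation vanishing on the lateral surface $\partial\Omega\times(0, \epsilon)$, so that $\Qvec + t\Pvec\in\mathcal{B}$ for all $t\in\R$ and $\tfrac{\d}{\d t}\big|_{t=0}\mathcal{F}_\lambda[\Qvec + t\Pvec] = 0$. I would differentiate the three contributions separately and then invoke the fundamental lemma of the calculus of variations twice: first with compactly supported $\Pvec$ to obtain~\eqref{eq:EL}, then with $\Pvec$ having arbitrary trace on~$\Gamma$ to obtain~\eqref{eq:bc-Neu}.

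For the elastic term, $\tfrac{\d}{\d t}\big|_{t=0}\tfrac12\int_V|\nabla(\Qvec+t\Pvec)|^2\,\d V = \int_V\nabla\Qvec:\nabla\Pvec\,\d V$, and integrating by parts — assuming the regularity of $\Qvec$ needed to pass to the strong form, which follows from standard elliptic theory once the weak Euler-Lagrange identity is established — gives $-\int_V\Delta\Qvec:\Pvec\,\d V + \int_\Gamma\partial_\nu\Qvec:\Pvec\,\d S$, the lateral part of $\partial V$ contributing nothing because $\Pvec$ vanishes there. The bulk potential term yields $\tfrac{\lambda^2}{L}\int_V(\partial_\Qvec f_b):\Pvec\,\d V$, where $\partial_\Qvec f_b$ is the gradient of $f_b$ projected onto~$S_0$; this projection is precisely what produces the traceless combination $-B(\Qvec\Qvec - \tfrac{\Ivec}{3}|\Qvec|^2)$ in~\eqref{eq:EL}, so this part is routine. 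The surface term contributes $\tfrac{\lambda}{L}\int_\Gamma(\partial_\Qvec f_s):\Pvec\,\d S$.

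The one genuine computation is identifying $\partial_\Qvec f_s$ as an element of $S_0$. Since $\Qvec\zhat\cdot\zhat = Q_{33}$ and $|(\Ivec - \zhat\otimes\zhat)\Qvec\zhat|^2 = Q_{13}^2 + Q_{23}^2$, we have $f_s(\Qvec) = \alpha_z(Q_{33} + s_+/3)^2 + \gamma_z(Q_{13}^2 + Q_{23}^2)$. Pairing its differential against a symmetric $\Pvec$ via $\Qvec:\Pvec = Q_{ij}P_{ij}$ gives the symmetric gradient matrix $M$ with $M_{33} = 2\alpha_z(Q_{33}+s_+/3)$, $M_{13}=M_{31}=\gamma_z Q_{13}$, $M_{23}=M_{32}=\gamma_z Q_{23}$, and all other entries zero. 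Because $\Pvec$ is traceless, the relevant object is $M$ modulo multiples of $\Ivec$; projecting onto $S_0$ by subtracting $\tfrac13(\tr M)\Ivec = \tfrac23\alpha_z(Q_{33}+s_+/3)\Ivec$ replaces the diagonal $\bigl(0,\,0,\,2\alpha_z(Q_{33}+s_+/3)\bigr)$ by $\bigl(-\tfrac23\alpha_z(Q_{33}+s_+/3),\,-\tfrac23\alpha_z(Q_{33}+s_+/3),\,\tfrac43\alpha_z(Q_{33}+s_+/3)\bigr)$ and leaves the off-diagonal entries unchanged. This is exactly the matrix $\Gvec(\Qvec)$ in the statement, and it is traceless by construction, so~\eqref{eq:bc-Neu} is a genuine identity in~$S_0$.

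Collecting the three contributions, criticality reads
\[
 \int_V\Bigl(\!-\Delta\Qvec + \tfrac{\lambda^2}{L}\,\partial_\Qvec f_b\Bigr):\Pvec\,\d V
 + \int_\Gamma\Bigl(\partial_\nu\Qvec + \tfrac{\lambda}{L}\Gvec(\Qvec)\Bigr):\Pvec\,\d S = 0
\]
for every admissible $\Pvec$. Taking $\Pvec\in C^\infty_{\mathrm{c}}(V, S_0)$ forces the volume integrand to vanish, which is~\eqref{eq:EL}; with that in hand the volume term drops out, and letting $\Pvec$ range over all of $W^{1,2}(V, S_0)$ vanishing on the lateral surface — hence with arbitrary trace on~$\Gamma$ — forces the boundary integrand to vanish $S_0$-componentwise on~$\Gamma$, which is~\eqref{eq:bc-Neu}. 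The only point requiring care is bookkeeping the $S_0$-projections consistently in both $\partial_\Qvec f_b$ and $\partial_\Qvec f_s$; everything else is standard.
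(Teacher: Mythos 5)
Your proposal is correct and follows essentially the same route as the paper: compute the first variation against perturbations $\Pvec$ vanishing on the lateral surface, integrate the elastic term by parts, and rewrite the surface integrand $2\alpha_z P_{33}(Q_{33}+s_+/3)+2\gamma_z(Q_{13}P_{13}+Q_{23}P_{23})$ as $\Gvec(\Qvec)\cdot\Pvec$ by exploiting $\tr\Pvec=0$, i.e.\ by projecting the naive gradient of $f_s$ onto $S_0$. Your projected matrix agrees entry-by-entry with the paper's $\Gvec$, and the two-step use of the fundamental lemma (interior test functions first, then arbitrary traces on $\Gamma$) is exactly how the paper separates \eqref{eq:EL} from \eqref{eq:bc-Neu}.
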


\begin{remark} \label{remark:Lagrange}
 The matrix~$\Gvec(\Qvec)$ is symmetric and traceless. Therefore,
 the Lagrange multipliers associated with the symmetry and tracelessness
 constraints have already been embedded in the definition of~$\Gvec$.
\end{remark}
\begin{remark} \label{remark:2-3D}
 Because of the boundary condition~\eqref{eq:bc-Neu}, 
 $z$-independent solutions ($\partial_z\Qvec=0$) may not, in general,
 be solutions of the 3D problem with surface energy anchoring
 on the top and bottom plates. However, when~$A = -B^2/(3C)$
 we know that there exist $z$-independent solutions with~$Q_{33}=-s_+/3$;
 they correspond to triplets~$(q_1, \, q_2, \, q_3)$
 with constant~$q_3=-s_+/6$ (see Corollary~\ref{cor:bounds}).
 These $z$-independent solutions with constant~$Q_{33}$ are also solutions
 of the 3D problem with surface energies on the top and bottom plates.
\end{remark}

\begin{proof}[Proof of Lemma~\ref{lemma:EL-surface}]
 Let~$\Qvec\in\mathcal{B}$ be a critical point for~$\mathcal{F}_\lambda$,
 and let~$\Pvec\in H^1(V, \, S_0)$ be any perturbation
 such that $\Pvec = 0$ on~$\partial V\setminus\Gamma$.
 We compute the first variation of~$\mathcal{F}_\lambda$
 with respect to~$\Pvec$:
 \[
  \begin{split}
   0=&\frac{\d}{\d t}_{|t=0} \mathcal{F}_\lambda [\Qvec+t\Pvec]
    = \int_V \left(\nabla\Qvec:\nabla\Pvec 
    + \frac{\lambda^2}{L} \left(A\Qvec\cdot\Pvec - B\Qvec^2\cdot\Pvec
    + C\abs {\Qvec}^2\Qvec\cdot\Pvec\right)\right) \d V \\
   &\qquad\qquad + \frac{\lambda}{L}
    \int_{\Gamma} \left(2\alpha_z(\Pvec\zhat\cdot\zhat)
    \left(\Qvec\zhat\cdot\zhat + \frac{s_+}{3}\right)
    + 2\gamma_z (\Ivec - \zhat\otimes\zhat)\Qvec\zhat\cdot 
    (\Ivec - \zhat\otimes\zhat)\Pvec\zhat\right) \d S,
  \end{split}
 \]
 where~$\Qvec\cdot\Pvec := \tr(\Qvec\Pvec) = Q_{ij}P_{ij}$.
 By integrating by parts, and noting that~$\tr\Pvec = 0$, we obtain:
 \begin{equation} \label{EL1}
  \begin{split}
   &\int_V \left(-\Delta\Qvec + 
    \frac{\lambda^2}{L} \left( A\Qvec - B\Qvec^2
    + \frac{B}{3}\abs{\Qvec}^2\Ivec + C\abs {\Qvec}^2\Qvec\right)\right)
    \cdot\Pvec \, \d V +
    \int_\Gamma \partial_\nu\Qvec\cdot\Pvec \, \d S \\
   &\qquad\qquad + \frac{\lambda}{L} \int_{\Gamma}
    \left(2\alpha_z(\Pvec\zhat\cdot\zhat)
    \left(\Qvec\zhat\cdot\zhat + \frac{s_+}{3}\right)
    + 2\gamma_z (\Ivec - \zhat\otimes\zhat)\Qvec\zhat\cdot 
    (\Ivec - \zhat\otimes\zhat)\Pvec\zhat\right) \d S = 0.
  \end{split}
 \end{equation}
 We now deal with the integral on~$\Gamma$.
 We first remark that
 \begin{equation} \label{EL1.1}
  \begin{split}
   (\Pvec\zhat\cdot\zhat) 
   \left(\Qvec\zhat\cdot\zhat + \frac{s_+}{3}\right)
   &= \left(Q_{33} + \frac{s_+}{3}\right)P_{33} \\
   &= \left(\begin{matrix}
     -\dfrac{1}{3}\left(Q_{33} + \dfrac{s_+}{3}\right) & 0 & 0 \\
     0 & -\dfrac{1}{3}\left(Q_{33} + \dfrac{s_+}{3}\right) & 0 \\
     0 & 0 & \dfrac{2}{3}\left(Q_{33} + \dfrac{s_+}{3}\right)
   \end{matrix}\right)\cdot\Pvec
  \end{split}
 \end{equation}
 because~$\tr\Pvec = 0$. We also have
 \begin{equation} \label{EL1.2}
  \begin{split}
   (\Ivec - \zhat\otimes\zhat)\Qvec\zhat\cdot 
    (\Ivec - \zhat\otimes\zhat)\Pvec\zhat
    = \sum_{i=1}^2 Q_{i3}P_{i3}
    = \frac{1}{2}\left(\begin{matrix}
     0 & 0 & Q_{13} \\
     0 & 0 & Q_{23} \\
     Q_{13} & Q_{23} & 0 \\
    \end{matrix}\right)\cdot\Pvec
%   &= (\Ivec - \zhat\otimes\zhat)^{\mathsf{T}}
%    (\Ivec - \zhat\otimes\zhat)\Qvec\zhat\cdot\Pvec\zhat \\
%   &= (\Ivec - \zhat\otimes\zhat)\Qvec\zhat\cdot\Pvec\zhat \\
%   &= (\Ivec - \zhat\otimes\zhat)((\Qvec\zhat)\otimes\zhat)\cdot\Pvec
  \end{split}
 \end{equation}
 Using~\eqref{EL1}, \eqref{EL1.1} and~\eqref{EL1.2} we obtain
 \begin{equation*} %\label{ELweak}
  \begin{split}
   &\int_V \left(-\Delta\Qvec + \frac{\lambda^2}{L} 
    \left( A\Qvec - B\Qvec^2 + \frac{B}{3}\abs{\Qvec}^2\Ivec
    + C\abs {\Qvec}^2\Qvec\right)\right) \cdot\Pvec \, \d V 
%     \\ &\qquad\qquad\qquad 
    + \int_\Gamma \left(
    \partial_\nu\Qvec + \frac{\lambda}{L}
    \Gvec(\Qvec)\right)\cdot\Pvec \, \d S = 0
  \end{split}
 \end{equation*}
 for any~$\Pvec\in H^1( V, \, S_0)$ such that $\Pvec= 0$
 on~$\partial V\setminus\Gamma$, and the lemma follows.
\end{proof}

\begin{lemma}\label{lemma:maxprinciple3d}
 There exists a constant~$M$ (depending only on~$A$, $B$,
 $C$ but \emph{not on} $\lambda$, $L$, $\epsilon$) such that
 any solution~$\Qvec$ of the system~\eqref{eq:EL}, subject to the
 boundary conditions~\eqref{eq:bc-Dir} and~\eqref{eq:bc-Neu},
 satisfies
 \[
  \abs{\Qvec} \leq M \qquad \textrm{in } V.
 \]
\end{lemma}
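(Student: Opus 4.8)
The plan is to prove the bound via the maximum principle. The obstacle is that arguing directly on $\abs{\Qvec}^2$ fails on $\Gamma$: by~\eqref{eq:bc-Neu}, $\partial_\nu\abs{\Qvec}^2 = -\tfrac{2\lambda}{L}\,\Qvec\cdot\Gvec(\Qvec)$ on $\Gamma$, and a short computation using $\tr\Qvec = 0$ gives $\Qvec\cdot\Gvec(\Qvec) = 2\alpha_z Q_{33}\bigl(Q_{33}+\tfrac{s_+}{3}\bigr) + 2\gamma_z\bigl(Q_{13}^2+Q_{23}^2\bigr)$, which is not sign-definite (the first summand can be as negative as $-\alpha_z s_+^2/18$). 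I would therefore decompose, again using $\tr\Qvec = 0$,
\[
 \abs{\Qvec}^2 = \tfrac12 r^2 + \tfrac32 Q_{33}^2 + 2P, \qquad r^2 := (Q_{11}-Q_{22})^2 + 4Q_{12}^2, \quad P := Q_{13}^2 + Q_{23}^2,
\]
and bound $P$, $Q_{33}$ and $r^2$ separately by constants depending only on $A$, $B$, $C$. The point is that each piece inherits, from the explicit form of $\Gvec$ in Lemma~\ref{lemma:EL-surface}, a boundary condition on $\Gamma$ with the ``right'' sign: there $\partial_\nu Q_{12} = 0$, $\partial_\nu(Q_{11}-Q_{22}) = 0$, $\partial_\nu Q_{i3} = -\tfrac{\lambda\gamma_z}{L}Q_{i3}$ ($i=1,2$) and $\partial_\nu Q_{33} = -\tfrac{4\lambda\alpha_z}{3L}\bigl(Q_{33}+\tfrac{s_+}{3}\bigr)$, so $\partial_\nu P = -\tfrac{2\lambda\gamma_z}{L}P \le 0$ and $\partial_\nu(r^2) = 0$ on $\Gamma$. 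On the lateral boundary $\Qvec = \Qvec_{\mathrm b}$ has $\zhat$ as an eigenvector with eigenvalue $-s_+/3$, so there $P = 0$, $Q_{33} = -s_+/3$ and $r^2 \le s_+^2$. (By elliptic regularity $\Qvec$ is smooth in $V$ and up to $\Gamma$, and continuous up to the lateral boundary, so the maximum principle and Hopf's lemma apply.)

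First I would bound $P$. Computing $\Delta P$ from~\eqref{eq:EL} and using $\abs{Q_{33}}\le\abs{\Qvec}$ and the elementary bound $\abs{Q_{11}Q_{13}^2 + 2Q_{12}Q_{13}Q_{23} + Q_{22}Q_{23}^2}\le\abs{\Qvec}\,(Q_{13}^2+Q_{23}^2)$ gives $\Delta P \ge \tfrac{2\lambda^2}{L}\bigl(A - 2\abs{B}\abs{\Qvec} + C\abs{\Qvec}^2\bigr)P$ in $V$. Fix $R_2 = R_2(A,B,C)$ with $A - 2\abs{B}\sqrt{t} + Ct > 0$ for $t > R_2$, and let $x_*$ attain $\max_{\overline V}P$. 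If $x_* \in V$ and $P(x_*) > R_2$, then $\abs{\Qvec(x_*)}^2 \ge P(x_*) > R_2$, so the right-hand side above is positive at $x_*$, contradicting $\Delta P(x_*) \le 0$. If $x_* \in\Gamma$ (away from the edges), then $\partial_\nu P(x_*)\ge 0$, while the boundary condition gives $\partial_\nu P(x_*) = -\tfrac{2\lambda\gamma_z}{L}P(x_*)\le 0$, forcing $P(x_*) = 0$. On the lateral boundary and its edges $P = 0$. Hence $P \le R_2$ throughout $V$.

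Next I would bound $Q_{33}$, using $\Delta Q_{33} = \tfrac{\lambda^2}{L}\bigl\{(A + C\abs{\Qvec}^2 - BQ_{33})Q_{33} - BP + \tfrac B3\abs{\Qvec}^2\bigr\}$ from~\eqref{eq:EL} and $\abs{\Qvec}^2 \ge Q_{33}^2$. If $Q_{33}$ attains its maximum over $\overline V$ at an interior point and is large and positive there, the right-hand side is positive (using $\abs{\Qvec}^2\ge Q_{33}^2$ and $P\le R_2$), a contradiction; if the maximum lies on $\Gamma$ with $Q_{33} > -s_+/3$, the boundary condition forces $\partial_\nu Q_{33} < 0$, contradicting $\partial_\nu Q_{33}\ge 0$; on the lateral boundary $Q_{33} = -s_+/3$. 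The symmetric argument at a minimum (where, if $Q_{33}$ is large and negative, the nonnegative term $\tfrac B3\abs{\Qvec}^2$ is dominated by $C\abs{\Qvec}^2 Q_{33}\le CQ_{33}^3 < 0$) bounds $Q_{33}$ below. Hence $\abs{Q_{33}}\le R_3(A,B,C)$. Finally, computing $\Delta(r^2)$ from~\eqref{eq:EL} and applying Cauchy--Schwarz to the cross term gives $\Delta(r^2)\ge\tfrac{2\lambda^2}{L}\bigl\{(A + BQ_{33} + C\abs{\Qvec}^2)r^2 - \abs{B}\,r\,P\bigr\}$; inserting $\abs{Q_{33}}\le R_3$, $P\le R_2$ and $\abs{\Qvec}^2\ge\tfrac12 r^2$ yields $\Delta(r^2)\ge\tfrac{2\lambda^2}{L}\bigl\{\tfrac C2 r^4 + (A - \abs{B}R_3)r^2 - \abs{B}R_2\,r\bigr\}$, which is positive for $r^2 > R_4$ with a suitable $R_4 = R_4(A,B,C)$. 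At a maximum of $r^2$ over $\overline V$: an interior point with value $> R_4$ contradicts $\Delta(r^2)\le 0$; a point $x_*\in\Gamma$ with $r^2(x_*) > R_4$ makes $r^2$ a non-constant subsolution near $x_*$, so Hopf's lemma gives $\partial_\nu(r^2)(x_*) > 0$, contradicting $\partial_\nu(r^2) = 0$ on $\Gamma$; on the lateral boundary $r^2 \le s_+^2$. Hence $r^2\le\max(R_4, s_+^2)$ in $V$, and combining the three bounds through $\abs{\Qvec}^2 = \tfrac12 r^2 + \tfrac32 Q_{33}^2 + 2P$ gives $\abs{\Qvec}\le M$ with $M = M(A,B,C)$.

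The hard part is exactly the behaviour on $\Gamma$: the maximum of $\abs{\Qvec}^2$ could a priori sit there, where the nonlinear Robin condition has no definite sign, so no direct argument works; the decomposition into $P$, $Q_{33}$ and $r^2$ is what turns the structure of $\Gvec$ into usable Robin or Neumann conditions on $\Gamma$, and the sequential order (first $P$, then $Q_{33}$ using $P\le R_2$, then $r^2$ using both) is forced by the coupling of the bulk equations~\eqref{eq:EL}. Two minor points: the surface coefficients $\alpha_z$, $\gamma_z$ enter only through their positive signs, so $M$ depends on $A$, $B$, $C$ alone; and at the edges $\partial\Omega\times\{0,\epsilon\}$ the lateral Dirichlet data supplies the boundary values of $P$, $Q_{33}$ and $r^2$ used above.
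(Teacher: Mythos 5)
Your proof is correct, but it takes a genuinely different route from the paper's. The paper handles the sign problem on $\Gamma$ with a single shift: it sets $\Pvec := \Qvec + \tfrac{s_+}{2}\,\zhat\otimes\zhat$ and observes that \eqref{eq:bc-Neu} then gives $-\tfrac{L}{\lambda}\partial_\nu(\abs{\Pvec}^2/2) = \Gvec(\Qvec)\cdot\Pvec = \gamma_z\left(Q_{13}^2+Q_{23}^2\right) + 2\alpha_z\left(Q_{33}+\tfrac{s_+}{3}\right)^2 \geq 0$ on $\Gamma$, while $\Delta(\abs{\Pvec}^2/2)$ is bounded below by a quartic polynomial in $\Qvec$ with leading term $C\abs{\Qvec}^4$; a single application of the maximum principle to the one scalar quantity $\abs{\Pvec}^2$ then yields the bound, with the lateral Dirichlet data supplying the boundary constant. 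You instead split $\abs{\Qvec}^2 = \tfrac12 r^2 + \tfrac32 Q_{33}^2 + 2P$ and run three sequential maximum-principle arguments, each piece inheriting a sign-definite Neumann or Robin condition on $\Gamma$ from the block structure of $\Gvec$. Both arguments are sound: the paper's is shorter and sidesteps the coupling between components entirely, whereas yours yields separate bounds on $P$, $Q_{33}$ and $r^2$ (potentially useful information) at the cost of length and of one slightly delicate step --- for $r^2$ the boundary condition degenerates to $\partial_\nu(r^2)=0$ on $\Gamma$, so at a putative boundary maximum you must first invoke the strong maximum principle (using strict subharmonicity of $r^2$ on the set $\{r^2>R_4\}$) to rule out interior maximum points before Hopf's boundary point lemma applies; your sketch compresses this, but it does go through since $\Gamma$ is flat and satisfies the interior ball condition away from the edges, where the lateral data takes over. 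The differential inequalities you state for $P$, $Q_{33}$ and $r^2$, the boundary identities on $\Gamma$, and the lateral values $P=0$, $Q_{33}=-s_+/3$, $r^2\leq s_+^2$ all check out against \eqref{eq:EL}, \eqref{eq:bc-Dir} and the explicit form of $\Gvec$ in Lemma~\ref{lemma:EL-surface}.
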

\begin{proof}
 Let~$\Pvec := \Qvec + s_+(\zhat\otimes\zhat)/2$.
 We have $\partial_\nu(\abs{\Pvec}^2/2) = \partial_\nu\Pvec\cdot\Pvec 
 = \partial_\nu\Qvec\cdot\Pvec$ and hence,
 by~\eqref{eq:bc-Neu}, we deduce that
 \begin{equation} \label{maxprinc1}
  \begin{split}
  -\frac{L}{\lambda}\partial_\nu(\abs{\Pvec}^2/2)
   = \Gvec(\Qvec)\cdot\Pvec &= \gamma_z\sum_{i=1}^2 Q_{i3}^2
    + \dfrac{2}{3}\alpha_z\left(Q_{33} + \dfrac{s_+}{3}\right)
    \left(-Q_{11}- Q_{22} + 2Q_{33} + s_+ \right) \\
   &= \gamma_z\sum_{i=1}^2 Q_{i3}^2
    + 2\alpha_z\left(Q_{33} + \dfrac{s_+}{3}\right)^2\geq 0
    \qquad \textrm{on }\Gamma.
  \end{split}
 \end{equation}
 Similarly, we manipulate the Euler-Lagrange system to obtain
 \begin{equation} \label{maxprinc2}
  \begin{split}
   \frac{L}{\lambda^2}\Delta(\abs{\Pvec}^2/2)
   &= \frac{L}{\lambda^2}\Delta\Qvec\cdot\left(\Qvec + 
      \dfrac{s_+}{2}\zhat\otimes\zhat\right) 
    + \frac{L}{\lambda^2}\abs{\nabla\Qvec}^2 \\
   &\geq A\abs{\Qvec}^2 - B\tr(\Qvec^3) + C\abs {\Qvec}^4
    + \frac{s_+}{2}\left( \left(A  + C\abs {\Qvec}^2\right)
    Q_{33} - B Q_{3k}Q_{3k} + \dfrac{B}{3}\abs{\Qvec}^2\right)
  \end{split}
 \end{equation}
% Using the inequality $\sqrt{6}\tr(\Qvec^3)\leq\abs{\Qvec}^3$
% {\BBB [Eq.~(25), MajumdarZarnescu]}, we also have
% \[
%  \begin{split}
%   \frac{L}{\lambda^2}\Delta(\abs{\Pvec}^2/2)
%   &\geq A\abs{\Qvec}^2 - \frac{B}{\sqrt{6}}\abs{\Qvec}^3 + C\abs {\Qvec}^4
%   + \frac{s_+}{2}\left(\left(A  + C\abs {\Qvec}^2\right) 
%    Q_{33} - B Q_{3k}Q_{3k} + \dfrac{B}{3}\abs{\Qvec}^2\right)
%  \end{split}
% \]
 The right-hand side of~\eqref{maxprinc2} 
 is a quartic polynomial in~$\Qvec$, with leading order 
 term~$C\abs{\Qvec}^4$ and~$C>0$. Therefore, there exists 
 a positive number~$M_1$ (depending on $A$, $B$ and~$C$ only)
 such that the right-hand side of~\eqref{maxprinc2}
 is positive when~$\abs{\Qvec}\geq M_1$. By the triangular inequality,
 we have
 \[
  \abs{\Pvec} \geq M_2 := M_1 + \frac{s_+}{2}
  \quad \implies \quad \abs{\Qvec} =
  \abs{\Pvec - \frac{s_+}{2}\zhat\otimes\zhat} \geq M_1
 \]
 and hence the right-hand side of~\eqref{maxprinc2} is positive when
 $\abs{\Pvec}\geq M_2$. Finally, the boundary 
 datum~$\Qvec_{\mathrm{b}}$ on the lateral surfaces,
 defined by~\eqref{eq:bc-Dir}, satisfies
 $\abs{\Qvec_{\mathrm{b}}}\leq (2/3)^{1/2}s_+$ 
 on~$\partial\Omega\times(0, \, \epsilon)$.
 By applying the maximum principle 
 to~\eqref{maxprinc1} and~\eqref{maxprinc2}, we obtain that
 \[
  \abs{\Pvec} \leq \max\left \{M_2, \, 
  \left(\sqrt{\frac{2}{3}} + \frac{1}{2}\right)s_+ \right\} 
  \qquad \textrm{in }  V
 \]
 Then, by the triangular inequality, $\Qvec$ is also
 bounded in terms of~$A$, $B$ and~$C$ only.
\end{proof}

%\subsection{Existence of the three-dimensional WORS}

Adapting the methods in~\cite{canevari2017order}, for any values of~$\lambda$
and~$\epsilon$, it is possible to construct
a WORS-like solution for this 3D problem with surface anchoring
on the top and bottom plates. The WORS has a constant eigenframe and, hence
it can be completely described in terms of two
degrees of freedom as before:
\begin{equation} \label{Q-WORS}
 \begin{split}
  \Qvec(x, \, y, \, z) &= q_1(x, \, y, \, z)
  \left(\nvec_1 \otimes \nvec_1 - \nvec_2\otimes \nvec_2 \right)
%   + q_2(x, \, y) \left(\nvec_1 \otimes \nvec_2 + \nvec_1\otimes \nvec_2 \right) \\
%   &\qquad\qquad
  + q_3(x, \, y, \, z) \left(2 \zhat\otimes\zhat
  - \nvec_1\otimes \nvec_1 - \nvec_2\otimes \nvec_2 \right) \! ,
 \end{split}
\end{equation}
where~$q_1$, $q_3$ are scalar functions,~$\nvec_1$,
$\nvec_2$ are given by~\eqref{eq:n12} and $q_1$ is constrained to vanish on the diagonals with symmetry 
\begin{equation} \label{WORS3}
 x y \, q_1(x, \, y, \, z) \geq 0 \qquad \textrm{for any } (x, \, y, \, z)\in V.
\end{equation}

\begin{proposition} \label{prop:WORS3D}
 For any~$\lambda$, $\epsilon$ and~$A$, there exists a solution of the form ~\eqref{Q-WORS}, of the system~\eqref{eq:EL},
 subject to the boundary conditions~\eqref{eq:bc-Dir} and~\eqref{eq:bc-Neu},
 satisfies~\eqref{WORS3} with $q_1=0$ along the square diagonals and has~$q_3 < 0$ on~$V$.
\end{proposition}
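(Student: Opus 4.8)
The plan is to reproduce the strategy of Proposition~\ref{prop:WORS}, working on the first-quadrant sub-well and reducing the surface anchoring energy to a single term. A $\Qvec$-tensor of the form~\eqref{Q-WORS} has $\zhat$ as an eigenvector, with $\Qvec\zhat = 2q_3\,\zhat$, so that $(\Ivec - \zhat\otimes\zhat)\Qvec\zhat = 0$; hence the $\gamma_z$-term in~\eqref{eq:fs} vanishes identically and $f_s(\Qvec) = \alpha_z\left(2q_3 + s_+/3\right)^2$. Writing $\Omega_+$ for the first-quadrant portion of $\Omega$ as in Proposition~\ref{prop:WORS}, set $V_+ := \Omega_+\times(0, \epsilon)$ and $\Gamma_+ := \Omega_+\times\{0, \epsilon\}$. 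Then the energy~\eqref{eq:LdG-s}, restricted to the ansatz~\eqref{Q-WORS}, reduces up to a positive multiplicative constant to the functional $G_\epsilon$ obtained by adding to the functional in~\eqref{eq:LdG-WORS2} (now with the three-dimensional gradient, and with the integral over $V_+$) the boundary term $\frac{\lambda\alpha_z}{L}\int_{\Gamma_+}\left(2q_3 + s_+/3\right)^2\d S$. I would minimise $G_\epsilon$ over the class $\mathcal{G}$ of pairs $(q_1, q_3)\in H^1(V_+)^2$ with $q_3\leq 0$ in $V_+$, with $q_1 = q_{1\mathrm{b}}$ and $q_3 = -s_+/6$ on the lateral Dirichlet faces $(\partial\Omega\cap\overline{\Omega_+})\times(0,\epsilon)$, and with $q_1 = 0$ on the interior faces $(\partial\Omega_+\setminus\partial\Omega)\times(0,\epsilon)$, imposing no condition on $q_3$ along the interior faces or on $\Gamma_+$; this is the direct analogue of the class used in Proposition~\ref{prop:WORS}.

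Since $\mathcal{G}$ is non-empty, convex and closed in $H^1(V_+)^2$, the surface term is non-negative, and the quartic bulk term provides coercivity, the direct method gives a minimiser $(q_1, q_3)$; replacing $q_1$ by $\abs{q_1}$ (which leaves $G_\epsilon$ unchanged) we may assume $q_1\geq0$ in $V_+$. The crucial step is to show that $q_3\leq-\delta<0$ a.e.\ in $V_+$ for some $\delta = \delta(A, B, C)>0$, so that the constraint $q_3\leq0$ is inactive. For $\varphi\in H^1(V_+)$ with $\varphi\geq0$ in $V_+$ and $\varphi = 0$ on the lateral Dirichlet faces, the perturbation $q_3 - t\varphi$ is admissible for $t\geq0$ small, so $\frac{\d}{\d t}_{|t=0^+}G_\epsilon[q_1, q_3 - t\varphi]\geq0$, which gives
\begin{align*}
 \int_{V_+}&\left(6\,\nabla q_3\cdot\nabla\varphi + \frac{\lambda^2}{L}\,f(q_1, q_3)\,\varphi\right)\d V \\
 &{}+ \frac{4\lambda\alpha_z}{L}\int_{\Gamma_+}\left(2q_3 + \frac{s_+}{3}\right)\varphi\,\d S \ \leq\ 0,
\end{align*}
with $f$ exactly as in~\eqref{f}. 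The only term not present in Proposition~\ref{prop:WORS} is the surface integral; choosing $\delta < s_+/6$ and $\varphi := (q_3 + \delta)^+$ (admissible because $q_3 = -s_+/6 < -\delta$ on the lateral Dirichlet faces), one has $2q_3 + s_+/3 \geq s_+/3 - 2\delta > 0$ and $\varphi\geq0$ wherever $q_3 > -\delta$, so this surface term carries the \emph{favourable} sign. Arguing with~\eqref{signf} as in Proposition~\ref{prop:WORS}, the three contributions are non-negative and sum to at most zero, hence all vanish; in particular $\int_{\{q_3>-\delta\}}\abs{\nabla q_3}^2 = 0$, so $\varphi$ has vanishing gradient and is constant on the connected set $V_+$, and since $\varphi = 0$ on the (positive-measure) lateral Dirichlet faces, $\varphi\equiv0$, i.e.\ $q_3\leq-\delta$ a.e.

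Once $q_3\leq-\delta$ is established, $(q_1, q_3)$ lies in the interior of $\mathcal{G}$ and is therefore an unconstrained critical point of $G_\epsilon$: it solves the system~\eqref{PDE-WORS} with the three-dimensional Laplacian in $V_+$, together with $\partial_\nu q_1 = 0$ and $\partial_\nu q_3 + \frac{2\lambda\alpha_z}{3L}\left(2q_3 + s_+/3\right) = 0$ on $\Gamma_+$, and with $q_1 = 0$ and $\partial_\nu q_3 = 0$ on the interior faces. Reflecting $q_1$ oddly and $q_3$ evenly across $\{x=0\}$ and $\{y=0\}$, and invoking elliptic regularity as in~\cite[Theorem~3]{dangfifepeletier}, one obtains a classical solution $(q_1, q_3)$ on all of $V$ satisfying~\eqref{WORS3}, vanishing along the diagonals, and with $q_3<0$ everywhere (continuity upgrades ``a.e.'' to ``everywhere''). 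Finally, assembling $\Qvec$ from $(q_1, q_3)$ via~\eqref{Q-WORS} and using that $\Gvec(\Qvec) = \frac{2}{3}\alpha_z\left(2q_3 + s_+/3\right)\left(2\zhat\otimes\zhat - \nvec_1\otimes\nvec_1 - \nvec_2\otimes\nvec_2\right)$, one checks that~\eqref{eq:EL} reduces to~\eqref{PDE-WORS} for such $\Qvec$ and that~\eqref{eq:bc-Neu} splits into exactly the two reduced Neumann conditions above; hence $\Qvec$ solves~\eqref{eq:EL} subject to~\eqref{eq:bc-Dir} and~\eqref{eq:bc-Neu}, as claimed.

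The main obstacle is the sub-solution argument for $q_3$: one has to verify that the extra boundary integral produced by the surface energy does not spoil the sign structure exploited in Proposition~\ref{prop:WORS}. As noted above it in fact cooperates, because in the region $\{q_3 > -\delta\}$ the factor $2q_3 + s_+/3$ is strictly positive. A secondary, routine point is elliptic regularity up to $\Gamma$ with the nonlinear Robin-type condition~\eqref{eq:bc-Neu} in place; in contrast with the stability statements, this construction needs no smallness of $\lambda$ and no restriction on $\epsilon$ or $A$.
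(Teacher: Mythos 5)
Your proposal is correct and follows essentially the same route as the paper: minimise the reduced two-degree-of-freedom energy on the quarter-well $V_+$ with the constraint $q_3\le 0$, show via the one-sided variation with $\varphi=(q_3+\delta)^+$ that the constraint is inactive (noting, as the paper implicitly uses, that the extra surface integral has the favourable sign on $\{q_3>-\delta\}$ since $q_3+s_+/6>0$ there), then reflect oddly/evenly and invoke elliptic regularity as in \cite[Theorem~3]{dangfifepeletier}. The reduced functional, the admissible class, the Robin condition $\partial_\nu q_3+\frac{4\lambda\alpha_z}{3L}\left(q_3+\frac{s_+}{6}\right)=0$ on $\Gamma\cap\overline{V}_+$, and the final reassembly of $\Qvec$ all match the paper's proof up to harmless normalising constants.
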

\begin{proof}
 Let
 \[
  V_+ := \left\{(x, \, y, \, z)\in V\colon x>0, \ y > 0\right\} \!.
 \]
 Following the approach in~\cite{canevari2017order}, we consider
 the functional
 \begin{equation} \label{eq:LdG-WORS3}
  \begin{split}
   G[q_1, \, q_3] &:= \int_{V_+} \left\{
   \abs{\nabla q_1}^2 + 3 \abs{\nabla q_3}^2 
   + \frac{\lambda^2}{L} \left(A (q_1^2 + 3 q_3^2 )
   + 2 B q_3 q_1^2 - 2 B q_3^3 + C(q_1^2 + 3 q_3^2)^2\right)\right\} \d S \\
   &\qquad\qquad + \frac{2\lambda\alpha_z}{L}
   \int_{\Gamma\cap\overline{V}_+} \left(q_3 + \frac{s_+}{6}\right)^2 \d S,
  \end{split}
 \end{equation}
obtained by substituting the ansatz~\eqref{Q-WORS}
 into \eqref{eq:LdG-s}.
 We minimize~$G$ among the finite-energy pairs $(q_1, \, q_3)\in H^1(V_+)^2$,
 subject to the constraint~$q_3\leq 0$ on~$V$ and to the boundary conditions
 \begin{equation} \label{eq:bc3D}
  q_1 = q_{1\mathrm{b}} \ \textrm{ and } \ q_3 = -{s_+}/{6} 
  \quad \textrm{on  } (\partial\Omega\times (0, \, \epsilon) )\cap \overline{V_+},
  \qquad q_1 = 0 \quad \textrm{on } \partial V_+ \setminus\partial V,
 \end{equation}
 where the function~$q_{1\mathrm{b}}$ is defined by~\eqref{eq:BC1}.
 A routine application of the direct method of the calculus of variations shows 
 that a minimizer~$(q_1^{WORS}, \, q_3^{WORS})$ exists. Without loss of generality,
 we can assume that $q_1^{WORS}\geq 0$ on~$V_+$; otherwise, we replace $q_1^{WORS}$
 with~$|q^{WORS}_1|$ and note that $G[q_1^{WORS}, \, q_3^{WORS}] = G[|q_1^{WORS}|, \, q_3^{WORS}]$.
 
 We claim that~$q_3^{WORS}\leq-\delta$ for some strictly positive constant~$\delta$,
 depending only on~$A$, $B$ and~$C$. The proof of this claim follows the argument in
 Proposition~\ref{prop:WORS}. We take a perturbation~$\varphi\in H^1(V_+)$ such that
 $\varphi\geq 0$ in~$V_+$ and~$\varphi = 0$ on~$\partial V\cap\overline{V_+}$.
 Then, $q_3^t := q_3^{WORS} - t\varphi$, for~$t\geq 0$, is
 an admissible perturbation for~$q_3^{WORS}$
 and from the optimality condition
 \[
  \frac{\d}{\d t}_{|t = 0} G[q_1^{WORS}, \, q_3^t] \geq 0
 \]
 we deduce
 \begin{equation} \label{subsol3}
  \int_{V_+} \left\{ -6 \nabla q_3^{WORS}\cdot\nabla\varphi
   - \frac{\lambda^2}{L} f(q_1^{WORS}, \, q_3^{WORS}) \, \varphi\right\} \d V
   - \frac{4\lambda\alpha_z}{L} \int_{\Gamma\cap\overline{V}_+} 
   \left(q_3 + \frac{s_+}{6}\right)\varphi \, \d S \geq 0.
 \end{equation}
 The function~$f(q_1^{WORS}, \, q_3^{WORS})$ is defined by~\eqref{f},
 and by~\eqref{signf} we know that there exists a constant~$\delta \in (0, \, s_+/6)$
 such that $f(q_1, \, q_3)>0$ for any~$q_1\in\R$ and any~$q_3\in [-\delta, \, 0]$.
 We choose~$\varphi$ as in~\eqref{varphi-subsol} and, 
 due to~\eqref{subsol3}, we deduce that
 $q_3^{WORS}\leq -\delta$ in~$V_+$.
 
 Since~$q_3^{WORS}$ is strictly negative, we can consider perturbations 
 of the form $q_3^t := q_3^{WORS} + t\varphi$, irrespective of the sign of~$\varphi$,
 provided that~$\abs{t}$ is sufficiently small. As a consequence,
 $(q_1^{WORS}, \, q_3^{WORS})$ solves the Euler-Lagrange system
 \begin{equation} \label{eq:EL13}
  \left\{\begin{aligned}
   \Delta q_1 &= \frac{\lambda^2}{L} q_1 \left\{A + 2B q_3 + 2C\left(q_1^2 + 3q_3^2 \right)\right\} \\
   \Delta q_3 &= \frac{\lambda^2}{L} q_3 \left\{A - Bq_3 + 2C\left(q_1^2  + 3q_3^2\right)\right\} 
   + \frac{\lambda^2B}{3L}q_1^2 
  \end{aligned} \right.
\end{equation}
 on~$V_+$, as well as the boundary conditions
 \begin{equation} \label{eq:bc_neu-wors}
  \partial_\nu q_1 = 0, \quad
  \partial_\nu q_3 + \frac{4\lambda\alpha_z}{3L} \left(q_3 + \frac{s_+}{6}\right) = 0
  \qquad \textrm{on } \Gamma\cap\overline{V}_+
 \end{equation}
 and~$\partial_\nu q_3=0$ on~$\partial V_+ \setminus\partial V$.
 We extend~$(q_1^{WORS}, \, q_3^{WORS})$ to the whole of~$V$
 by reflections about the planes~$\{x = 0\}$ and~$\{y = 0\}$:
 \[
  q_1^{WORS}(x, \, y, \, z) := \mathrm{sign}(xy)\,  q_1^{WORS}(\abs{x}, \, \abs{y}, \, z), \qquad
  q_3^{WORS}(x, \, y, \, z) := q_3^{WORS}(\abs{x}, \, \abs{y}, \, z)
 \]
 for any~$(x, \, y, \, z)\in V\setminus\overline{V_+}$.
 The functions $q_1^{WORS}$, $q_3^{WORS}$, defined above, solve the Euler-Lagrange 
 system~\eqref{eq:EL13} on $\Omega\setminus(\{x=0\}\cup\{y=0\})$.
 In fact, an argument based on elliptic regularity,
 along the lines of~\cite[Theorem~3]{dangfifepeletier},
 shows that $(q_1^{WORS}, \, q_3^{WORS})$ is a solution of~\eqref{eq:EL13} on the whole of~$\Omega$. 
 Finally, using~\eqref{eq:EL13}, \eqref{eq:bc3D} and~\eqref{eq:bc_neu-wors}, we can check that
 the $\Qvec$-tensor associated with~$(q_1^{WORS}, \, q_3^{WORS})$, 
 as defined by~\eqref{Q-WORS}, has all the required properties.
\end{proof}

%\subsection{Uniqueness of solutions for small $\lambda$}

Adapting a general criterion for uniqueness of critical points
(see, e.g., \cite[Lemma~8.2]{lamy2014}),
we can show that the functional~\eqref{eq:LdG-s} has a unique
critical point in the admissible class~\eqref{eq:admissible}
when~$\lambda$ is small enough, irrespective of~$\epsilon$, which implies that the WORS is globally stable for $\lambda$ sufficiently small with surface energies too.

\begin{proposition} \label{prop:uniq_small_lambda}
 There exists a positive number~$\lambda_0$
 (depending only on~$A$, $B$, $C$) such that, 
 when~$\lambda<\lambda_0$, the system~\eqref{eq:EL}
 has a unique solution that satisfies the boundary
 conditions~\eqref{eq:bc-Dir}, \eqref{eq:bc-Neu}.
\end{proposition}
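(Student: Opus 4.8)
The plan is to adapt the general convexity-based criterion for uniqueness of critical points (as in \cite[Lemma~8.2]{lamy2014}), the two ingredients that make it run here being the a~priori bound of Lemma~\ref{lemma:maxprinciple3d}, which is uniform in~$\epsilon$, and a Poincar\'e inequality on~$V$ whose constant is likewise independent of~$\epsilon$. Concretely, suppose $\Qvec_1$, $\Qvec_2$ are two solutions of~\eqref{eq:EL} subject to~\eqref{eq:bc-Dir} and~\eqref{eq:bc-Neu}. By Lemma~\ref{lemma:maxprinciple3d} both satisfy $\abs{\Qvec_i}\leq M$ on~$V$ with $M = M(A, \, B, \, C)$. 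Put $\Pvec := \Qvec_2 - \Qvec_1$ and $\Qvec_t := \Qvec_1 + t\Pvec$ for $t\in[0, \, 1]$; since $\{\abs{\Qvec}\leq M\}$ is convex, $\abs{\Qvec_t}\leq M$. Because $\Qvec_1$ and $\Qvec_2$ have the same Dirichlet datum~$\Qvec_{\mathrm b}$ on $\partial\Omega\times(0, \, \epsilon)$, the tensor field~$\Pvec$ vanishes there, so $\Pvec$ is an admissible perturbation and $\phi(t) := \mathcal{F}_\lambda[\Qvec_t]$ (with $\mathcal{F}_\lambda$ as in~\eqref{eq:LdG-s}) is a smooth function of~$t$ with $\phi'(0) = \phi'(1) = 0$. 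The goal is to show that $\phi$ is strictly convex when~$\lambda$ is small, which then forces $\Pvec\equiv 0$.

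Differentiating twice gives
\[
 \phi''(t) = \int_V \abs{\nabla\Pvec}^2 \, \d V
 + \frac{\lambda^2}{L}\int_V \D^2 f_b(\Qvec_t)[\Pvec, \, \Pvec] \, \d V
 + \frac{\lambda}{L}\int_\Gamma \D^2 f_s(\Qvec_t)[\Pvec, \, \Pvec] \, \d S,
\]
and I would estimate the three terms as follows. The surface integral is nonnegative because $f_s$ in~\eqref{eq:fs} is a sum of squares of affine functions of~$\Qvec$, hence a convex quadratic; this is exactly the step where the explicit form of the anchoring energy enters, and it is what renders the non-Dirichlet condition on~$\Gamma$ harmless. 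Since $f_b$ is a fixed polynomial, on $\{\abs{\Qvec}\leq M\}$ one has $\D^2 f_b(\Qvec_t)[\Pvec, \, \Pvec]\geq -K\abs{\Pvec}^2$ with $K = K(A, \, B, \, C)$. Finally, for a.e.\ fixed $z\in(0, \, \epsilon)$ the slice $\Pvec(\cdot, \, \cdot, \, z)$ vanishes on $\partial\Omega$, so the two-dimensional Poincar\'e inequality on~$\Omega$ yields $\int_\Omega\abs{\Pvec}^2 \leq C_P\int_\Omega\abs{\nabla\Pvec}^2$ with $C_P$ depending only on~$\Omega$; integrating in~$z$ gives $\int_V\abs{\Pvec}^2\leq C_P\int_V\abs{\nabla\Pvec}^2$ with the \emph{same} constant. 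Combining, $\phi''(t)\geq (1 - \lambda^2 K C_P/L)\int_V\abs{\nabla\Pvec}^2$, so with $\lambda_0 := (L/(K C_P))^{1/2}$ — a quantity depending only on $A$, $B$, $C$ once $L$ and~$\Omega$ are regarded as fixed — we get $\phi''>0$ on $[0, \, 1]$ whenever $\lambda<\lambda_0$, unless $\nabla\Pvec\equiv 0$. A smooth, strictly convex $\phi$ on $[0, \, 1]$ has strictly increasing derivative and hence cannot satisfy $\phi'(0) = \phi'(1) = 0$; therefore $\nabla\Pvec\equiv 0$, and since $V$ is connected and $\Pvec = 0$ on the lateral boundary, $\Pvec\equiv 0$, i.e.\ $\Qvec_1 = \Qvec_2$.

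The one point that needs care, and the reason for the slice-wise argument above, is the $\epsilon$-independence of~$\lambda_0$: the bound~$M$ (via Lemma~\ref{lemma:maxprinciple3d}), the Hessian bound~$K$, and the Poincar\'e constant~$C_P$ must all come from data that do not see the well height; everything else is routine. An equivalent route, avoiding the second variation, is to subtract the two Euler--Lagrange systems, test the difference with~$\Pvec$, absorb the boundary term on~$\Gamma$ using that $\Gvec$ is affine and that its linear part~$\Gvec_{\mathrm{lin}}$ satisfies $\Gvec_{\mathrm{lin}}(\Pvec)\cdot\Pvec = 2\alpha_z P_{33}^2 + 2\gamma_z(P_{13}^2 + P_{23}^2)\geq 0$ (using $\tr\Pvec = 0$), bound the bulk difference by $K\abs{\Pvec}^2$ on $\{\abs{\Qvec}\leq M\}$, and close the estimate with the same Poincar\'e inequality; this is the phrasing used in~\cite{lamy2014, canevari2017order}.
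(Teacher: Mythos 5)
Your proposal is correct and follows essentially the same route as the paper: the a priori bound of Lemma~\ref{lemma:maxprinciple3d}, the slice-wise Poincar\'e inequality on~$\Omega$ (whose constant is independent of~$\epsilon$), the convexity of~$f_s$, and a Hessian bound for~$f_b$ on the ball $\{\abs{\Qvec}\leq M\}$ together yield strict convexity of~$\mathcal{F}_\lambda$ for $\lambda$ small, hence uniqueness of the critical point. The only cosmetic difference is that you verify convexity via $\phi''(t)>0$ along the segment joining two solutions (and spell out why $\phi'(0)=\phi'(1)=0$ forces them to coincide), whereas the paper's Lemma~\ref{lemma:strictly_convex} establishes the same strict convexity through the midpoint inequality; the two formulations are equivalent here.
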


The main step of the proof is the following

\begin{lemma} \label{lemma:strictly_convex}
 For any $M > 0$, there exists a 
 $\lambda_0 = \lambda_0(M, \, A, \, B, \, C, \, L, \, \Omega)$ such that,
 for  $\lambda < \lambda_0$, the functional~$\mathcal{F}_\lambda$
 given by~\eqref{eq:LdG-s} is strictly convex in the class
 \begin{equation}
    X = \left\{\Qvec \in H^{1}(V, \, S_0)\colon
    |\Qvec| \leq M \ \textrm{ on } V, \quad
    \Qvec = \Qvec_{\mathrm{b}} \ \textrm{ on } 
    \partial\Omega\times(0, \, \epsilon)\right \} \! .
 \end{equation}
\end{lemma}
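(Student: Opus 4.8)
The plan is to split $\mathcal{F}_\lambda$ into its Dirichlet, bulk and surface contributions, to observe that the Dirichlet part is strictly convex on the affine space determined by the lateral boundary condition and that the surface part is convex, and then to absorb the (possible) non-convexity of the bulk part --- which, on~$X$, is controlled by a constant depending only on~$M$ --- into the Dirichlet part by means of a Poincar\'e inequality, at the price of taking $\lambda$ small. First I would record two elementary facts. (i) The surface integrand $f_s$ in~\eqref{eq:fs} is a sum of squares of linear functionals of~$\Qvec$, hence a non-negative quadratic form; therefore $\Qvec\mapsto\frac{\lambda}{L}\int_\Gamma f_s(\Qvec)\,\d S$ is convex on~$H^1(V, \, S_0)$. (ii) Since $f_b$ is a polynomial, $\mathrm{D}^2 f_b$ is continuous on the finite-dimensional space~$S_0$, hence bounded on the compact ball $\{\Qvec\in S_0\colon\abs{\Qvec}\leq M\}$; thus there is $c_1 = c_1(M, \, A, \, B, \, C)>0$ with $\mathrm{D}^2 f_b(\Qvec)[\Pvec, \, \Pvec]\geq -c_1\abs{\Pvec}^2$ for all $\abs{\Qvec}\leq M$ and all $\Pvec\in S_0$, equivalently (the ball being convex)
\[
 f_b\bigl((1-t)\Qvec_0 + t\Qvec_1\bigr) \leq (1-t)f_b(\Qvec_0) + t\,f_b(\Qvec_1) + \tfrac{c_1}{2}\,t(1-t)\abs{\Qvec_1-\Qvec_0}^2
\]
whenever $\abs{\Qvec_0}, \, \abs{\Qvec_1}\leq M$ and $t\in[0, \, 1]$.

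Next I would test convexity along segments. Fix $\Qvec_0\neq\Qvec_1$ in~$X$, put $\Pvec := \Qvec_1-\Qvec_0$ (so $\Pvec\in H^1(V, \, S_0)$, $\Pvec = 0$ on $\partial\Omega\times(0, \, \epsilon)$ and $\Pvec\not\equiv0$), and $\Qvec_t := (1-t)\Qvec_0 + t\Qvec_1\in X$. Using the exact quadratic identity $\tfrac12\int_V\abs{\nabla\Qvec_t}^2 = (1-t)\tfrac12\int_V\abs{\nabla\Qvec_0}^2 + t\,\tfrac12\int_V\abs{\nabla\Qvec_1}^2 - \tfrac{t(1-t)}{2}\int_V\abs{\nabla\Pvec}^2$ for the Dirichlet term, the convexity from~(i) for the surface term, and the pointwise bound from~(ii) integrated over~$V$ for the bulk term, I would arrive at
\[
 \mathcal{F}_\lambda[\Qvec_t] \leq (1-t)\,\mathcal{F}_\lambda[\Qvec_0] + t\,\mathcal{F}_\lambda[\Qvec_1]
  - \frac{t(1-t)}{2}\left(\int_V\abs{\nabla\Pvec}^2\,\d V - \frac{\lambda^2 c_1}{L}\int_V\abs{\Pvec}^2\,\d V\right) \! .
\]

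Finally I would close using Poincar\'e. Since $\Pvec$ vanishes on the whole lateral boundary $\partial\Omega\times(0, \, \epsilon)$, slicing at a fixed height~$z$ gives, for a.e.~$z$, a two-dimensional Poincar\'e inequality on~$\Omega$ with zero trace on~$\partial\Omega$; integrating in~$z$ and bounding the planar gradient by the full gradient yields $\int_V\abs{\Pvec}^2\leq c_P\int_V\abs{\nabla\Pvec}^2$ with $c_P = c_P(\Omega)$ \emph{independent of~$\epsilon$}. Hence the bracket above is at least $\bigl(1 - \lambda^2 c_1 c_P/L\bigr)\int_V\abs{\nabla\Pvec}^2$, so choosing e.g.\ $\lambda_0 := \sqrt{L/(c_1 c_P)}$ makes it strictly positive for all $\lambda<\lambda_0$; as $\Pvec\not\equiv0$ vanishes on a subset of~$\partial V$ of positive surface measure, $\int_V\abs{\nabla\Pvec}^2>0$, and therefore $\mathcal{F}_\lambda[\Qvec_t] < (1-t)\mathcal{F}_\lambda[\Qvec_0] + t\,\mathcal{F}_\lambda[\Qvec_1]$, i.e.\ $\mathcal{F}_\lambda$ is strictly convex on~$X$, with $\lambda_0$ depending only on $M$, $A$, $B$, $C$, $L$, $\Omega$. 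There is no serious obstacle here; the one point deserving care is to keep every constant free of~$\epsilon$, which is precisely why the Hessian bound in~(ii) is taken on the fixed ball $\{\abs{\Qvec}\leq M\}$ rather than on energy sublevel sets, and why the Poincar\'e constant is isolated as a purely two-dimensional quantity.
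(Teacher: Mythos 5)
Your argument is correct and follows essentially the same route as the paper's proof: the exact quadratic identity for the Dirichlet term, convexity of the surface energy $f_s$, a bound on the second derivatives of $f_b$ over the fixed ball $\{\abs{\Qvec}\leq M\}$ to quantify the bulk term's defect of convexity, and the slice-wise Poincar\'e inequality on $\Omega$ (giving a constant independent of $\epsilon$) to absorb that defect into the gradient term for $\lambda$ small. The only cosmetic difference is that you test convexity along the whole segment $\Qvec_t$ while the paper works with the midpoint $(\Qvec_1+\Qvec_2)/2$; the estimates and the resulting threshold $\lambda_0$ are the same up to constants.
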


Once Lemma~\ref{lemma:strictly_convex} is proved,
Proposition~\ref{prop:uniq_small_lambda} follows. 
Indeed, let us consider the constant~$M$ given by
Lemma~\ref{lemma:maxprinciple3d}. Then, any solution
of the system~\eqref{eq:EL}, subject to the boundary
conditions~\eqref{eq:bc-Dir}, \eqref{eq:bc-Neu},
must belong to the class~$X$, by Lemma~\ref{lemma:maxprinciple3d}.
However, if $\mathcal{F}_\lambda$ is strictly convex in~$X$, then
it cannot have more than one critical point in~$X$.

\begin{proof}[Proof of Lemma~\ref{lemma:strictly_convex}]
  For any $\Qvec_1, \Qvec_2 \in X$, we have
  \begin{equation} \label{Poinc0}
    \begin{aligned}
      \mathcal{F}_{\lambda}\left( \frac{\Qvec_1 + \Qvec_2}{2} \right) 
      & = \int_{V} \frac{1}{8} |\nabla(\Qvec_1 + \Qvec_2)|^2 
        + \frac{\lambda^2}{L} f_b \left(\frac{\Qvec_1 + \Qvec_2}{2}\right) \d V 
        + \frac{\lambda}{L} \int_{\Gamma} f_s\left(\frac{\Qvec_1 + \Qvec_2}{2} \right) \d S \\
      & = \int_{V} \frac{1}{4} \left( |\nabla \Qvec_1|^2 + |\nabla \Qvec_2|^2
        - \frac{1}{2} |\nabla(\Qvec_1 - \Qvec_2)|^2 \right) 
        + \frac{\lambda^2}{L}  f_b \left( \frac{\Qvec_1 + \Qvec_2}{2} \right) \d V  \\
      &\qquad\qquad + \frac{\lambda}{L} \int_{\Gamma} 
        f_s \left(\frac{\Qvec_1 + \Qvec_2}{2} \right) \d S.  \\
    \end{aligned}
  \end{equation}
  Since $f_s(\Qvec)$ is a convex function of $\Qvec$, we have
  \begin{equation} \label{Poinc.5}
    \int_{\Gamma} f_s \left(\frac{\Qvec_1 + \Qvec_2}{2}\right) \d S
    \leq \frac{1}{2}\int_{\Gamma} \left(f_s(\Qvec_1) + f_s (\Qvec_2)\right) \d S.
  \end{equation}
  We now deal with the bulk term, $f_b$.
  Both~$\Qvec_1$ and~$\Qvec_2$ are equal to~$\Qvec_{\mathrm{b}}$
  on~$\partial\Omega\times(0, \, \epsilon)$ and hence, 
  $\Qvec_2 - \Qvec_1 = 0$ on~$\partial\Omega\times (0, \, \epsilon)$.
  For a.e. fixed $z_0 \in (0, \, \epsilon)$, 
  by the Poincar\'e inequality on $\Omega$, we have
  \begin{equation}
   \left\|\Qvec_1(\cdot, \, \cdot, \,  z_0) 
    - \Qvec_2(\cdot, \, \cdot, \,  z_0)\right\|_{L^2(\Omega)}^2 
   \leq C_1(\Omega) \left\|  \nabla_{x, y}(\Qvec_1(\cdot, \, \cdot, \,  z_0) 
    - \Qvec_2(\cdot, \, \cdot, \,  z_0))\right\|^2_{L^2(\Omega)} \! ,
  \end{equation}
  where~$C_1(\Omega)$ is a positive constant that only
  depends on the geometry of~$\Omega$.
  By integrating the previous inequality with respect
  to~$z_0\in (0, \, \epsilon)$, we deduce that
  \begin{equation} \label{Poinc}
   \int_{V}\abs{\Qvec_1- \Qvec_2}^2 \d V
   \leq C_1(\Omega) \int_{V} \abs{\nabla(\Qvec_1 - \Qvec_2)}^2 \d V \! .
  \end{equation}
  Since $|\Qvec_1| \leq M$ and
  $|\Qvec_2| \leq M$ everywhere in~$V$, we have
  \begin{equation} \label{Poinc2}
    \int_V\left| f_b \left(\frac{\Qvec_1 + \Qvec_2}{2}\right) 
    - \frac{1}{2}f_b(\Qvec_1) - \frac{1}{2}f_b(\Qvec_2)\right| \d V
    \leq \left\|f_b\right\|_{W^{2,\infty} (B_M)}
    \int_{V} \abs{\Qvec_1 - \Qvec_2}^2 \d V,
  \end{equation}
  where $B_M =  \left \{ \Qvec \in S_0\colon |\Qvec| \leq M \right\}$
  and $\left\|f_b\right\|_{W^{2,\infty} (B_M)}$ is a positive constant that
  bounds the second derivatives of~$f_b$ in~$B_M$
  (in particular, $\left\|f_b\right\|_{W^{2,\infty} (B_M)}$ 
  only depends on $M$, $A$, $B$ and~$C$).
  Combining~\eqref{Poinc} and~\eqref{Poinc2}, we find
  a positive constant~$C_2 = C_2 (f_b, \, \Omega, \, M) 
  := C_1(\Omega) \left\|f_b\right\|_{W^{2,\infty} (B_M)}$ such that
  \begin{equation} \label{Poinc3}
   \int_V f_b \left(\frac{\Qvec_1 + \Qvec_2}{2}\right) \d V \leq
    \frac{1}{2} \int_V \left(f_b(\Qvec_1) + f_b(\Qvec_2) \right) \d V 
    + C_2 \int_V \abs{\nabla(\Qvec_1 - \Qvec_2)}^2 \d V.
  \end{equation}
  Now, we use~\eqref{Poinc.5} and~\eqref{Poinc3}
  to bound the right-hand side of~\eqref{Poinc0}. We obtain
  \begin{equation}
   \begin{aligned}
      \mathcal{F}_{\lambda}\left(\frac{\Qvec_1 + \Qvec_2}{2}\right) 
      & \leq \frac{1}{2} \left(\mathcal{F}_{\lambda}(\Qvec_1) 
      + \mathcal{F}_{\lambda}(\Qvec_2)\right)
      + \left(\frac{C_2\lambda^2}{L} - \frac{1}{8}\right)
      \int_V \abs{\nabla(\Qvec_1 - \Qvec_2)}^2 \d V
   \end{aligned}
  \end{equation}
  If we take $\lambda < \lambda_0 := (\frac{L}{8C_2})^{1/2}$, 
  then we have
  \begin{equation}
     \mathcal{F}_{\lambda}\left(\frac{\Qvec_1 + \Qvec_2}{2}\right) 
      \leq \frac{1}{2} \left(\mathcal{F}_{\lambda}(\Qvec_1) 
      + \mathcal{F}_{\lambda}(\Qvec_2)\right) 
  \end{equation}
  and the equality holds if and only if~$\Qvec_1 = \Qvec_2$.
  This proves that~$\mathcal{F}_\lambda$ is strictly convex in~$X$.
\end{proof}

We deduce that the WORS-solution survives in 3D wells, independently of the well height, with both natural boundary conditions and realistic surface energies on the top and bottom surfaces. Moreover, the WORS is globally stable for $\lambda$ small enough, independent of well height and in the next section, we complement our analysis with numerical examples.

\section{Numerics}
\label{sec:numerics}

\subsection{Numerical Methods}

For computational convenience, in this section we take the cross-section of the well, $\Omega$,
to be a (non-truncated) square with sides parallel to the coordinate axes, i.e.~$\Omega = (-1, \, 1)^2$.
We consider the general $\Qvec$-tensor with five degrees of freedom
\begin{equation}\label{q12345}
\begin{aligned}
\Qvec(x,y) & = q_1(x, y, z)(\xhat \otimes \xhat - \yhat \otimes \yhat) + q_2(x, y, z) (\xhat \otimes \yhat + \yhat \otimes \xhat) \\
       & + q_3(x, y, z) (2 \zhat \otimes \zhat - \xhat \otimes \xhat - \yhat \otimes \yhat) \\
       & + q_4(x, y, z)(\xhat \otimes \zhat + \zhat \otimes \xhat) + q_5(x, y, z)(\yhat \otimes \zhat + \zhat \otimes \yhat ), \\
\end{aligned}
\end{equation}
where $\xhat$, $\yhat$ and $\zhat$ are unit-vectors in the $x$-, $y$- and $z$-directions respectively.

Moreover, instead of considering Dirichlet conditions (infinite strong anchoring) on the lateral surfaces, we consider finite anchoring on the lateral surfaces, which allows us to study nematic equilibria without excluding the corners of the well \cite{Walton2018}. More precisely, we impose surface energies on the lateral sides given by \cite{kralj2014order} %in the non-dimensionlized sense
\iffalse
\begin{equation}
 \begin{aligned}
   & f_s(\Qvec) = \omega_1  \alpha_x (\Qvec \xhat \cdot \xhat - \frac{2}{3} s_{+})^2 + \gamma_z \big| \left( \mathbf{I} - \xhat \otimes \xhat \right) \Qvec \xhat \big|^2,  \quad y = 0, 1;  \\
   & f_s(\Qvec) = \omega_2   \alpha_y (\Qvec \yhat \cdot \yhat - \frac{2}{3} s_{+})^2 + \gamma_z \big| \left( \mathbf{I} - \yhat \otimes \yhat \right) \Qvec \yhat \big|^2,  \quad x = 0, 1;  \\
 \end{aligned}
\end{equation}
\fi

\begin{equation}\label{side_anchoring_1}
 \begin{aligned}
   & f_s(\Qvec) = \omega_1 \left( \Qvec - g(x) \left( \xhat \otimes \xhat - \frac{1}{3} \mathbf{I} \right) \right)^2,  \quad y = 0, 1;  \\
   & f_s(\Qvec) = \omega_2 \left( \Qvec - g(y) \left( \yhat \otimes \yhat - \frac{1}{3} \mathbf{I} \right) \right)^2,  \quad x = 0, 1;  \\
 \end{aligned}
\end{equation}
where $\omega_i = \frac{W_i \lambda}{L}$ is the non-dimensionalized anchoring strength, $W_i$ is the surface anchoring, the function $g \in C^{\infty}([-1, 1])$ eliminates the discontinuity at the corners e.g
\begin{equation}
g(x) = s_{+}, \quad \forall x \in [-1 + \delta, 1 - \delta], \qquad g(-1) = g(1) = 0,
\end{equation}
for a small constant $\delta$. The choice of $g$ does not affect numerical results qualitatively.
 %which is about ... for strong anchoring and ... for the weak anchoring,
We take $W_1 = W_2 = 10^{-2} \mathrm{Jm}^{-2}$ to account for the strong anchoring on the lateral sides of well \cite{ravnik2009landau}.

On $\Gamma$ - the top and bottom surfaces, % we consider two types of boundary conditions: (1), periodic boundary condition, i.e. $\Qvec(.,., 0) = \Qvec(.,.,h)$; (2),
 % which is imposed by adding surface energy
the surface energy for finite tangential anchoring (\ref{eq:fs}) can be written as
\begin{equation}\label{fs_z}
 f_s(\Qvec)  =  w_z \int_{\Gamma}\left(\alpha_z \left( \Qvec \zhat \cdot \zhat + \frac{1}{3} s_{+} \right)^2 + \gamma_z \big| \left( \mathbf{I} - \zhat \otimes \zhat \right) \Qvec \zhat \big|^2\right)\d S
\end{equation}
% where $\alpha = c_3 + c_4 > 0$, $\beta = - \frac{c_1}{2(c_3 + c_4)}$ and $\gamma = c_4 > 0$.
where $w_z = \frac{W_z \lambda}{L}$ is the non-dimensionalized anchoring strength. The surface energy (\ref{fs_z}) favors planar boundary conditions on the top and bottom surface, such that $\zhat$ is an eigenvector of $\Qvec$ with associated eigenvalue $-\frac{1}{3}s^{+}$.

Instead of solving the Euler-Lagrange equations for the LdG free energy, we use the energy-minimization based numerical method \cite{gartlanddavis, wang2017topological} to find the minimizer of current system. The physical domain can be rescaled to $\Omega_c = \{ (\bar{x}, \bar{y}, \bar{z}) ~|~ \bar{x} \in [0, 2 \pi], \bar{y} \in [0, 2 \pi], \bar{z} \in [-1, 1]  \}$. Since $\Qvec$ is a symmetric and traceless matrix, $\Qvec$ can be written as
\begin{equation}
\Qvec = 
\begin{pmatrix} 
p_1 & p_2 & p_3 \\
p_2 & p_4 & p_5 \\
p_3 & p_5 & -p_1 - p_4 \\
\end{pmatrix}.
\end{equation}
We can expand $p_i$ in terms of special functions: Fourier series on $\bar{x}$ and $\bar{y}$, and Chebyshev polynomials on $\bar{z}$, i.e.
\begin{equation}\label{Expand}
  p_i(\bar{x}, \bar{y}, \bar{z}) = \sum_{l=1-L}^{L-1}\sum_{m = 1-M}^{M-1} \sum_{n = 0}^{N-1} p^{lmn}_{i} X_l(\bar{x}) Y_{m}(\bar{y})Z_{n}(\bar{z}).
\end{equation}
where $L$, $M$, $N$ specify the truncation limits of the expanded series, $X_l$ and $Y_m$ are defined as
\begin{equation}
X_{l}(\bar{x}) =
\begin{cases}
\cos l \bar{x} \quad l \geq 0, \\
\sin |l| \bar{x} \quad l < 0.
\end{cases}
\quad
Y_{m}(\bar{y}) =
\begin{cases}
\cos m \bar{y} \quad m \geq 0, \\
\sin |m| \bar{y} \quad m < 0.
\end{cases}
\end{equation}

Inserting (\ref{Expand}) into the LdG free energy (\ref{eq:LdG}) with surface energy term (\ref{side_anchoring_1}) and (\ref{fs_z}), we get a function of $\mathbf{p} = ( p_{lmn}^{i} ) \in \mathbb{R}^{D}$, where $D = (2L - 1) \times (2M - 1) \times N$.
% The critical points of $\frac{\pp F}{\pp q_{lnm}}$
The minimizers of function $F(\mathbf{p})$ can be found by some standard optimization methods. In the following simulation, we mainly use L-BFGS, which is a type of quasi-Newton methods and is efficient for our problem\cite{wright1999numerical}.

% We can use standard optimization methods, such as gradient descent and L-BFGS, to find minimizers of $F(q_{lnm})$. In the simulation, we  

The energy-minimization based numerical approach with L-BFGS usually converges to a local minimizer with a proper initial guess, but that is not necessarily guaranteed. Similar to Ref. \cite{robinson2017molecular}, we can justify the stability of an obtained solution $\mathbf{p}$ by computing the smallest eigenvalue $\lambda_1$ of Hessian matrix $\mathbf{G}(\mathbf{p})$ corresponding to $\mathbf{p}$:
\begin{equation}\label{Rl}
\lambda_1 = \min_{\vvec \neq 0, \\ \vvec \in \mathbb{R}^{D}} \frac{ \langle \mathbf{G}(\mathbf{p}) \vvec, \, \vvec \rangle }{ \langle \vvec,  \vvec \rangle },
\end{equation}
where $\langle\cdot, \, \cdot \rangle$ is the standard inner product in $\mathbb{R}^{D}$.
A solution is locally stable (metastable) if $\lambda_1 > 0$.
Practically, $\lambda_1$ can be computed by solving the gradient flow equation of $\vvec$
\begin{equation}\label{eq1}
\frac{\pp \vvec}{\pp t} = - \frac{2 \gamma}{ \langle \vvec, \, \vvec \rangle } \left( \mathbf{G} \vvec - \frac{\langle \mathbf{G} \vvec, \, \vvec \rangle}{ \langle \vvec, \, \vvec \rangle}  \vvec  \right),
\end{equation}
where $\gamma(t)$ is a relaxation parameter, and $\mathbf{G}\vvec = \mathbf{G} (\mathbf{p}) \vvec$ is approximated by
\begin{equation}
\mathbf{G}(\mathbf{p}) \vvec =  - \frac{ \nabla_{D} F(\mathbf{p} + l \vvec) -  \nabla_D F (\mathbf{p} - l \vvec)}{2l},
\end{equation}
for some small constant $l$. We can choose $\gamma(t)$ properly to accelerate the convergence of the dynamic system (\ref{eq1}).

In what follows, we frequently refer to the biaxiality parameter\cite{majumdar2010landau}
\[
\beta^2 = 1 - 6 \frac{\left(\textrm{tr} \Qvec^3 \right)^2}{|\Qvec|^6}
\]
such that $0\leq \beta^2 \leq 1$ and $\beta^2 = 0$ if and only if $\Qvec$ is uniaxial or isotropic (for which we set $\beta^2=0$ by default).

\subsection{Numerical Results}

 In the following, we take $A = -\frac{B^2}{3C}$ if not stated differently, so all material constants in Landau-de Gennes free-energy are fixed.
The two key dimensionless variables are
\begin{equation}
\bar{\lambda}^2 = \frac{2C \lambda^2}{L}, \quad \epsilon = \frac{h}{\lambda},
\end{equation}
which describe the cross-sectional size and height of the square well respectively. Other dimensionless variables are related to the surface energy on all six surfaces.

\subsubsection{Strong anchoring on the lateral surfaces}
Firstly, we consider strong anchoring on the lateral surfaces, by taking $W_1 = W_2 = 10^{-2} \mathrm{J m}^{-2}$ in (\ref{side_anchoring_1}). For the surface energy on the top and bottom plates (see~\eqref{fs_z}), we take $W_z = 10^{-2} \mathrm{Jm}^{-2}$ if not stated differently.
%{\BBB (Comment: the existence of $z$-independent solutions dependson the choice of the special temperature, $A = -B^2/(3C)$; see Remark~\ref{remark:2-3D}).}
For relatively large $\bar{\lambda}^2$,  we find the well-known diagonal and rotated solutions as stable configurations for arbitrary $\epsilon$~\cite{tsakonas2007multistable}. These are essentially described by $\Qvec$-tensors of the form
\[
\Qvec = q \left( \nvec \otimes \nvec - \frac{\mathbf{I}_2}{2} \right) + q_3 \left( \zhat\otimes \zhat - \frac{\mathbf{I}_3}{3} \right)
\]
where $\nvec = \left(\cos \theta, \sin \theta, 0 \right)$, $q>0$, $q_3 <0$, $\mathbf{I}_2$ is the identity matrix in two dimensions and $\mathbf{I}_3$ is the identity matrix in three dimensions respectively. Moreover, $\theta$ is a solution of $\Delta \theta = 0$ on a square subject to appropriately defined Dirichlet conditions \cite{lewis2014colloidal}. In the case of the diagonal solution, $\nvec$ roughly aligns along one of the square diagonals whereas for the rotated solution, $\theta$ rotates by approximately $\pi$ radians between a pair of opposite square edges. In Fig. \ref{3D_WRD}(a)-(b). we plot a diagonal and a rotated solution for $\bar{\lambda}^2 = 100$ and $\epsilon = 4$ with $A = - \frac{B^2}{3C}$. These solutions are z-invariant, as $|\pp_z \Qvec|^2 \approx 10^{-12}$ in our numerical solutions.

\begin{figure}[!htb]
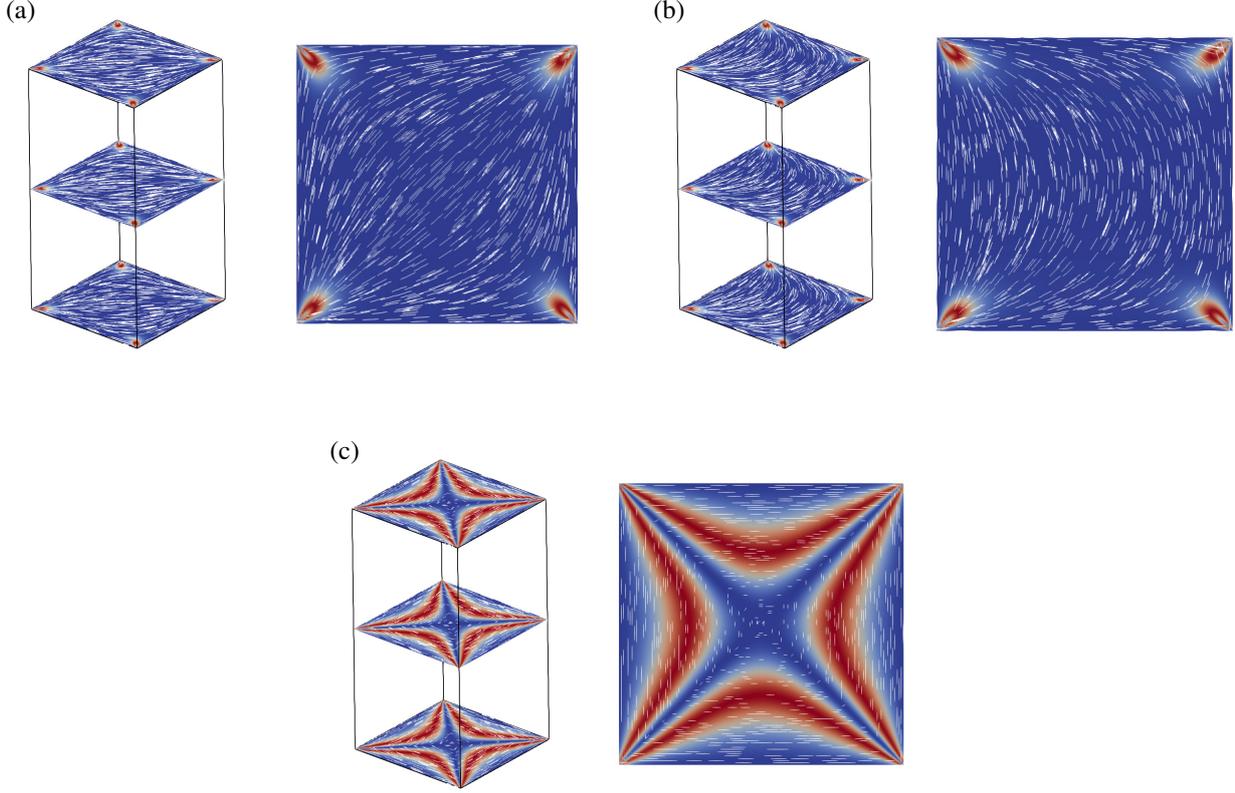

  \begin{center}
    \begin{overpic}[height = 12.5em]{D_h_2_L_100_W_1e-2_3D_view.eps}
      \put(-5, 100){(a)}
  \end{overpic}
  \hspace{1em}
    \begin{overpic}[height = 12.5em]{D_h_2_L_100_W_1e-2_M.eps}
    \end{overpic}
    \hspace{2em}
    \begin{overpic}[height = 12.5em]{R_h_2_L_100_W_1e-2_3D_view.eps}
      \put(-5, 100){(b)}
    \end{overpic}
    \hspace{1em}
    \begin{overpic}[height = 12.5em]{R_h_2_L_100_W_1e-2_M.eps}
    \end{overpic}
    \end{center}
    
    \vspace{2em}
    \begin{center}
    \begin{overpic}[height = 12.5em]{WORS_h_2_L_5_W_1e-2_3D_view.eps}
      \put(-5, 100){(c)}
      \end{overpic}
        \hspace{1em}
    \begin{overpic}[height = 12.5em]{WORS_h_2_L_5_W_1e-2_M.eps}
    \end{overpic}
    \end{center}
  \caption{(a) A diagonal solution for $\bar{\lambda}^2 = 100$ and $\epsilon = 4$.
  (b) A rotated solution for $\bar{\lambda}^2 = 100$ and $\epsilon = 4$.
  (c) The WORS for $\bar{\lambda}^2 = 5$ and $\epsilon = 4$.
  The colors show the biaxiality parameter $\beta^2$, which are arranged such that the high to low values (one to zero) correspond to variations from red to blue. The white lines indicate the director direction $\nvec$.}\label{3D_WRD}
\end{figure}

\begin{figure*}[hbt]
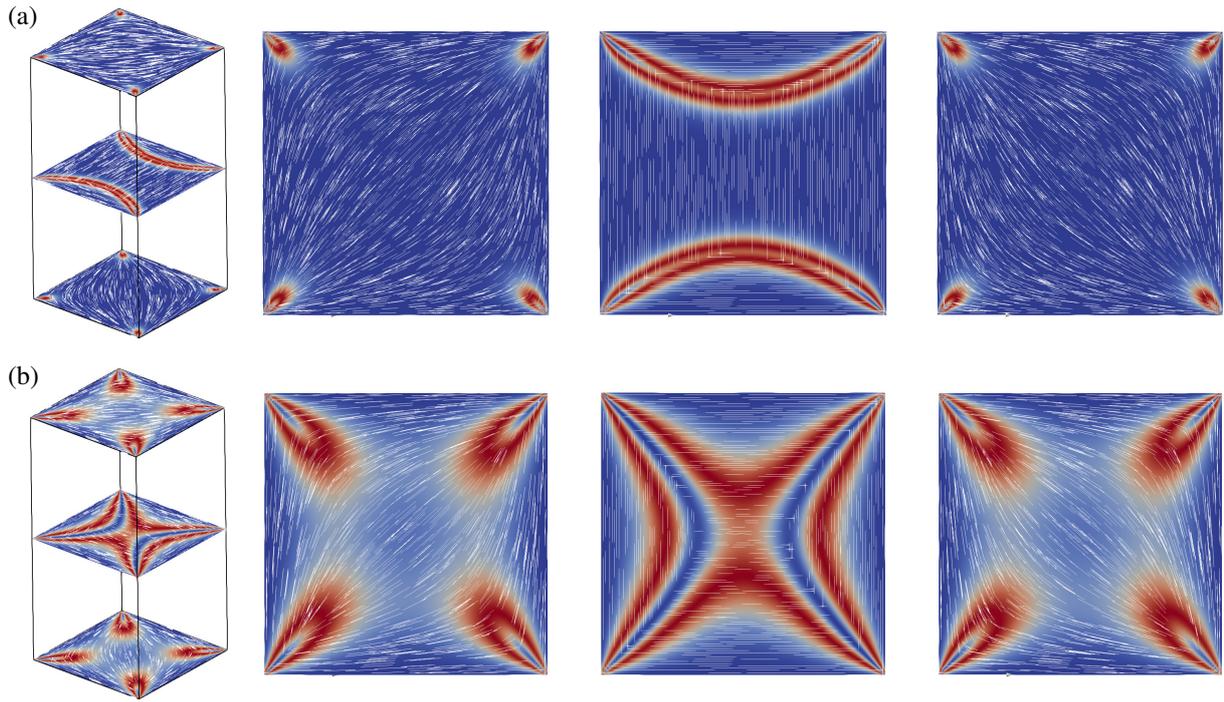

  \centering
  \begin{overpic}[height = 12.5em]{DD_h_2_L_100_3D_View.eps}
    \put(-5, 95){(a)}
  \end{overpic}
  \begin{overpic}[height = 12.5em]{DD_t_h_2_L_100.eps}
  \end{overpic}
  \begin{overpic}[height = 12.5em]{DD_m_h_2_L_100.eps}
  \end{overpic}
  \begin{overpic}[height = 12.5em]{DD_b_h_2_L_100.eps}
  \end{overpic}

  \vspace{1em}
  \begin{overpic}[height = 12.5em]{DD_h_2_L_10_3D_View.eps}
    \put(-5, 95){(b)}
  \end{overpic}
  \begin{overpic}[height = 12.5em]{DD_t_h_2_L_10.eps}
  \end{overpic}
  \begin{overpic}[height = 12.5em]{DD_m_h_2_L_10.eps}
  \end{overpic}
  \begin{overpic}[height = 12.5em]{DD_b_h_2_L_10.eps}
  \end{overpic}
  \caption{(a) The locally stable mixed 3D solution for $\bar{\lambda}^2 = 100$ and $\epsilon = 4$, shown by 3D view and cross sections at  $z = 0$, $z = 2$, and $z = 4$ respectively. (b) The locally stable mixed 3D solution for $\bar{\lambda}^2 = 10$ and $\epsilon = 4$, shown by 3D view and cross sections at  $z = 0$, $z = 2$, and $z = 4$ respectively.  The colors show the biaxiality parameter $\beta^2$, which are arranged such that the high to low values (one to zero) correspond to variations from red to blue. The white lines indicate the director direction $\nvec$.}\label{3D_1}
\end{figure*}

For sufficiently small $\bar{\lambda}^2$, we always get the WORS for arbitrary $\epsilon$, in accordance with the uniqueness results for small $\lambda$ in previous sections. In Fig. \ref{3D_WRD}(c), we plot the WORS for $\bar{\lambda}^2 = 5$ and $\epsilon = 4$  with $A = - \frac{B^2}{3C}$.

% \subsubsection{3D configurations}
Interestingly, for $\epsilon$ large enough, we can have additional mixed locally stable solutions for relatively large $\bar{\lambda}^2$. In Fig. \ref{3D_1}(a)-(b), we plot  a mixed 3D solution for $\bar{\lambda}^2 = 100$ and $10$, with $\epsilon = 4$. These mixed solutions can be obtained by taking a mixed initial condition as
$\Qvec = s_{+} \left(\nvec \otimes \nvec - \frac{1}{3} \mathbf{I}_3 \right)$ with
\begin{equation}
  \nvec(x, y, z) =
  \begin{cases}
    \frac{1}{\sqrt{2}}(1, ~1, 0),  \quad z \geq \frac{\epsilon}{2}  \\
    \frac{1}{\sqrt{2}}(1, -1, 0),  \quad z < \frac{\epsilon}{2}. \\
   \end{cases}
\end{equation}
The initial condition has two separate diagonal profiles on the top and bottom surfaces with a mismatch at the centre of the well, at $z = \frac{\epsilon}{2}$. In this case, the L-BFGS procedure converges to a locally stable solution that has different diagonal configurations on the top and bottom plates. On the middle slice, we have a BD-like profile (referring to the terminology in \cite{canevari2017order}), where the corresponding $\Qvec$ tensor is of the form
\[
\Qvec_{BD} = q_1 (\xhat \otimes \xhat - \yhat \otimes \yhat ) + q_3 \left( 2 \zhat \otimes \zhat - \xhat \otimes \xhat - \yhat \otimes \yhat \right)
\]
with two degrees of freedom, $q_3 <0$ on the middle slide and $q_1 = 0$ near a pair of parallel edges of the square cross-section ($q_1=0$ describes a transition layer between two distinct values of $q_1$). We compute the smallest eigenvalue of the Hessian matrix corresponding to this solution, which is positive and hence, this mixed solution is numerically stable. Indeed, these mixed solutions have lower free energy than rotated solutions for $\bar{\lambda}^2 = 100$ and $\epsilon = 4$.
Numerical simulations show that mixed solutions cease to exist when $\epsilon$ or $\bar{\lambda}^2$ is small enough. For $\bar{\lambda}^2 = 100$, we cannot find such solutions for $\epsilon \leq 0.8$. 
We can generate more 3D configurations by mixing diagonal and rotated configurations on the top and bottom surfaces or two different rotated solutions but these are unstable according to our numerical simulations. %{\text{blue} But none of them can be stabilized in the current system. It is still unclear whether there exists some unstable 3D mixed configurations, which serve as transition state in the current system.} 
% {\BBB (Comment: BD and WORS on a 2D domain are unstable; see Proposition~\ref{prop:unstable}.)}

\subsubsection{Weak anchoring on the lateral surfaces}
In this section, we relax surface anchoring on the lateral surfaces and fix $W_z = 10^{-2} \mathrm{Jm}^{-2}$ on the top and bottom plates with $\epsilon = 0.2$.

In Fig. \ref{WORS_Weak}, we plot numerical solutions for $W_1 = W_2 = 10^{-2} \mathrm{Jm}^{-2}, 2 \times 10^{-3} \mathrm{Jm}^{-2}, 10^{-3} \mathrm{Jm}^{-2}$ and $10^{-4} \mathrm{Jm}^{-2}$, respectively, with $\bar{\lambda}^2 = 5$ and $\epsilon = 0.1$. All three solutions are obtained by using a diagonal-like initial condition.
% All three solutions are z-invarant.
In the strong anchoring case ($W_1 = W_2 = 10^{-2} \mathrm{Jm}^{-2}$), we get the WORS as expected, as the WORS is the unique critical point when $\bar{\lambda}^2$ is small enough. However, for $W_1 = W_2 = 10^{-3} \mathrm{Jm}^{-2}$, we get a diagonal-like solution in which maximum biaxiality is achieved around the corner. By further decreasing the anchoring strength, the nematic director is almost uniformly aligned along the diagonal direction. Similar results were reported in \cite{kralj2014order}.
\begin{figure}[!htb]
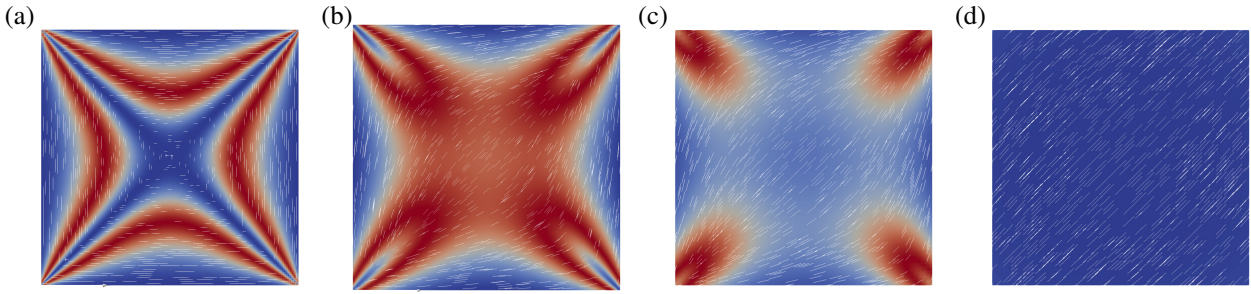

  \centering
  \begin{overpic}[width = 0.24\columnwidth]{D_ini_h_0_1_L_5_W_1e-2.eps}
    \put(-5, 95){(a)}
  \end{overpic}
  \hfill
  \begin{overpic}[width = 0.24\columnwidth]{D_ini_h_0_1_L_5_W_2e-3.eps}
    \put(-5, 95){(b)}
  \end{overpic}
  \hfill
  \begin{overpic}[width = 0.24\columnwidth]{D_h_0_1_L_5_W_1e-3.eps}
    \put(-5, 95){(c)}
  \end{overpic}
  \hfill
  \begin{overpic}[width = 0.24\columnwidth]{D_ini_h_0_1_L_5_W_1e-4.eps}
    \put(-5, 95){(d)}
  \end{overpic}
  \caption{Transition from a diagonal solution to the WORS by increasing the anchoring strength on the lateral surfaces for $\bar{\lambda}^2 = 5$ and $\epsilon = 0.1$, shown by the cross-section at $\epsilon = 0.1$.  (a) $W_i = 10^{-2} \mathrm{Jm}^{-2}$; (c) $W_i = 10^{-3} \mathrm{Jm}^{-2}$; (d) $W_i = 10^{-4} \mathrm{Jm}^{-2}$.  The colors show the biaxiality parameter $\beta^2$, which are arranged such that the high to low values (one to zero) correspond to variations from red to blue. The white lines indicate the director direction $\nvec$.
    % Numerical solutions for surface energy (\ref{side_anchoring_1}) with a diagonal-like initial condition: (a) $\bar{\lambda}^2 = 5$ and $W_i = 10^{-2} \mathrm{Jm}^{-2}$; (a) $\bar{\lambda}^2 = 5$ and $W_i = 10^{-3} \mathrm{Jm}^{-2}$; (a) $\bar{\lambda}^2 = 5$ and $W_i = 10^{-4} \mathrm{Jm}^{-2}$.
  }\label{WORS_Weak}
\end{figure}

For $W_1 = W_2 = 10^{-3} \mathrm{Jm}^{-2}$, we can get the WORS by further decreasing $\bar{\lambda}^2$. However, the WORS ceases to exist for $W_1 = W_2 = 10^{-4} \mathrm{Jm}^{-2}$. Quantitatively, we can compute bifurcation points $\bar{\lambda}^{2}_{*}$, such that the WORS is the unique solution for $\bar{\lambda}^2 < \bar{\lambda}^{2}_{*}$, as a function of anchoring strength $W_1 = W_2 = W$, shown in Fig. \ref{Bifur}. We can find $\bar{\lambda}_{*}^2$ by decreasing $\bar{\lambda}^2$ till diagonal-like intial conditions converge to WORS, since diagonal solutions cease to exist for $\bar{\lambda}^2 < \bar{\lambda}^{2}_{*}$. The result in Fig. \ref{Bifur} is computed with $\epsilon = 0.1$. However, this result is independent of $\epsilon$ as both diagonal and the WORS are z-invariant solutions for $A = -\frac{B^2}{3C}$. %\{Our result is in accordance with the bifuration point for the Dirichlet boundary in a 2D reduced domain is at $\bar{\lambda}^2 \approx 6.4$. }
\begin{figure}[!h]
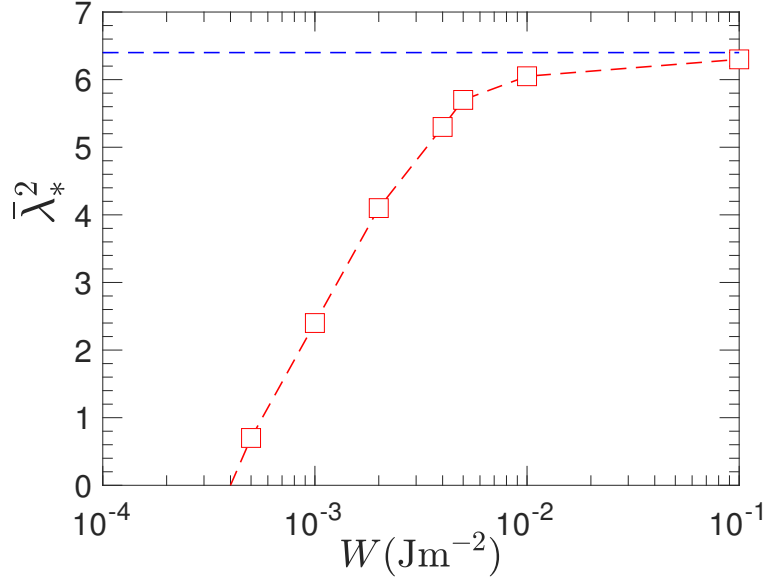

  \centering
  \begin{overpic}[width = 0.6 \columnwidth]{W_burf.eps}
  \end{overpic}
  \caption{Bifurcation points $\bar{\lambda}^{*}$, such that the WORS is the unique solution for $\bar{\lambda}^2 < \bar{\lambda}_{*}^2$, as a function of anchoring strength. The blue dashed line indicates the bifuration point of the WORS for Dirichlet boundary condition in a 2D square domain. }\label{Bifur}
\end{figure}

% The anchoring strength on the lateral surfaces also has profound influence on the bifurcation phenomenon between diagnoal and rotated solutions. In Ref. \cite{luo2012multistability}, the authors show that iagonal solutions exist for all values of the anchoring strength $W \geq 0$, while rotated solutions only exist for $W \geq W_c > 0$. This is also ture in a 3D well for..

Another way to relax the surface anchoring is to consider the surface energy
\begin{equation}\label{relax_1}
  \begin{aligned}
    & \text{on}~~y = 0, 1: \\
    & f_s(\Qvec) = \omega_1   \left( \alpha  \left(\Qvec \xhat \cdot \xhat - \frac{2}{3} s_{+} \right)^2 + \gamma \big| \left( \mathbf{I} - \xhat \otimes \xhat \right) \Qvec \xhat \big|^2 \right);  \\
   & \text{on}~~x = 0, 1: \\
   & f_s(\Qvec) = \omega_2    \left( \alpha \left( \Qvec \yhat \cdot \yhat - \frac{2}{3} s_{+} \right)^2 + \gamma \big| \left( \mathbf{I} - \yhat \otimes \yhat \right) \Qvec \yhat \big|^2 \right),\\
 \end{aligned}
\end{equation}
where $\omega_i = \frac{W_i \lambda}{L}$ is the non-dimensionalized anchoring strength, $\alpha > 0$ and $\gamma > 0$ are constants. The second term in the surface energy (\ref{relax_1}) forces $\xhat$ ($\yhat$) to be an eigenvector of $\Qvec$ on the plane $y = 0,~1$ ($x = 0,~1$), while the first term forces the eigenvalue associated with  $\xhat$ ($\yhat$) to be $\frac{2}{3}s_{+}$. Since the second term in (\ref{relax_1}) can be zero if we take $\Qvec = s_{+} \left( \zhat \otimes \zhat - \frac{1}{3} \mathbf{I} \right)$, which also makes the surface energy on the top and bottom plates ($\Gamma$) zero, we  keep $\alpha$ non-zero to get interesting defect patterns.

We fix $W_i = 10^{-2} \mathrm{Jm}^{-2}$ and vary $\alpha$ and $\gamma$ to relax the anchoring on the lateral surfaces. 
% Fig. \ref{Res_Relax_1} shows the numerical result for $\epsilon = 0.1$ with various  $\bar{\lambda}^2$ with $\alpha, \gamma$ and $W_i$. % In this case, we still get a WORS-like configuration, which is z-invarant but
Fig. \ref{Res_Relax_1} shows the numerical result for $\epsilon = 0.1$ and $\bar{\lambda}^2 = 5$ with various $\alpha$ and $\gamma$, by using diagonal-like initial conditions.
For $\alpha = \gamma = 1$ and $W_i = 10^{-2} \mathrm{Jm}^{-2}$, we can get a WORS-like solution, which has a strong biaxial region near the boundary. The biaxial regions near the boundary become larger as $\alpha$ gets smaller.  We then fix $\alpha = 1$ and vary $\gamma$. If $\gamma$ is small enough, the nematic director (the leading eigenvector of the $\Qvec$-tensor) is not tangent to the square edges and the WORS ceases to exist.
\begin{figure}[h]
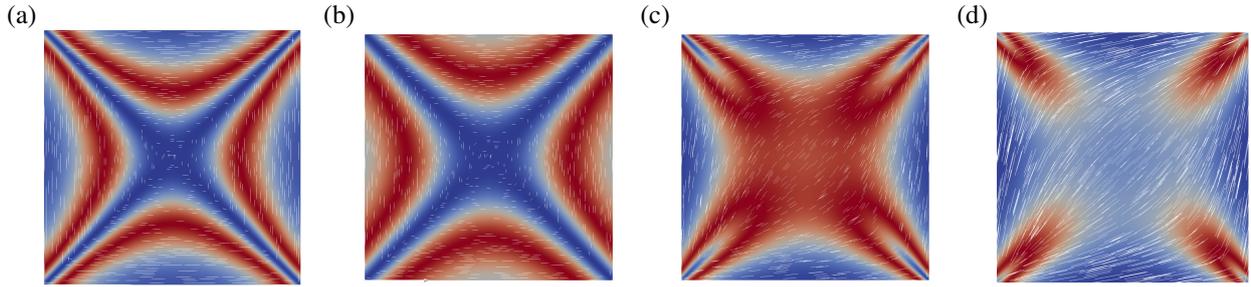

  \centering
  \begin{overpic}[width = 0.24\columnwidth]{WORS_h_0_1_L_5_Relax.eps}
    \put(-5, 95){(a)}
  \end{overpic}
  \hfill
  \begin{overpic}[width = 0.24\columnwidth]{D_ini_h_0_1_L_5_relax_a_0_1_g_1.eps}
    \put(-5, 95){(b)}
  \end{overpic}
  \hfill
  \begin{overpic}[width = 0.24\columnwidth]{D_ini_h_0_1_L_5_relax_a_1_g_0_2.eps}
    \put(-5, 95){(c)}
  \end{overpic}
  \hfill
  \begin{overpic}[width = 0.24\columnwidth]{D_ini_h_0_1_L_5_relax_a_1_g_0_1.eps}
    \put(-5, 95){(d)}
  \end{overpic}
  \caption{Numerical solutions for surface energy (\ref{relax_1}) with a diagonal-like initial condition for $\bar{\lambda}^2 = 5$ and $\epsilon = 0.2$, shown by the cross-section at $z = 0.1$: (a) $\alpha = \gamma = 1$; (b) $\alpha = 0.1$ and $\gamma = 1$; (c) $\alpha = 1$ and $\gamma = 0.2$; (d) $\alpha = 1$ and $\gamma = 0.1$. The colors show the biaxiality parameter $\beta^2$, which are arranged such that the high to low values (one to zero) correspond to variations from red to blue. The white lines indicate the director direction $\nvec$.}\label{Res_Relax_1}
\end{figure}

The above examples show that the WORS ceases to exist if the anchoring on the lateral surfaces is weak enough, and we always get a diagonal-like solution when the WORS ceases to exist.
It should be remarked that the diagonal-like solutions tend to be defect-free around the corners with weak anchoring, as the nematic directors aren't forced to be tangential
to the square edges and there is no biaxial-uniaxial or biaxial-isotropic interface near the corners.  %which was pointed out in Ref. \cite{luo2012multistability}.

%

% \subsubsubsection{Corner behavior }
\iffalse
\begin{figure}[h]
  \begin{overpic}[width = 0.32\columnwidth]{L_5_relax_Corner_a_1_g_0_2.eps}
    \put(-5, 100){(a)}
  \end{overpic}
  \hfill
  \begin{overpic}[width = 0.32\columnwidth]{L_5_W_2e-3_Corner.eps}
    \put(-5, 100){(b)}
  \end{overpic}
  \hfill
  \begin{overpic}[width = 0.32\columnwidth]{L_5_W_1e-3_Corner.eps}
    \put(-5, 100){(c)}
  \end{overpic}

    \begin{overpic}[width = 0.32\columnwidth]{L_5_relax_Corner_a_1_g_0_2.eps}
    \put(-5, 100){(a)}
  \end{overpic}
  \hfill
  \begin{overpic}[width = 0.32\columnwidth]{L_5_W_2e-3_Corner.eps}
    \put(-5, 100){(b)}
  \end{overpic}
  \hfill
  \begin{overpic}[width = 0.32\columnwidth]{L_5_W_1e-3_Corner.eps}
    \put(-5, 100){(c)}
  \end{overpic}
  \caption{Corner behavior in the corner }\label{Coner_Weak}
\end{figure}
\fi

% \subsection{3D solutions}

\subsubsection{Escaped Solutions}
In Ref. \cite{wang2018order}, the authors show that  there exists two escaped solutions with non-zero $q_4$ and $q_5$, and $q_3 > 0$, in the reduced 2D square domain for relatively large $\bar{\lambda}^2$. Our simulations show that these two escaped solutions can exist in 3D wells for similar values of $\bar{\lambda}^2$ if the anchoring strength on the top and bottom plates are weak enough and the escaped solutions are numerically locally stable. Fig. \ref{3D_escaped} (a)-(b) show the nematic director and biaxiality parameter in the middle slices of these two types of escaped configurations for $\bar{\lambda}^2 = 100$, $\epsilon = 4$ and $W_z = 10^{-5} \mathrm{Jm}^{-2}$, which are quite similar to the escaped configurations in a cylindrical cavity \cite{kralj1993stability}.  The value of $q_3$ in configuration \ref{3D_escaped}(a) is plotted in Fig. \ref{3D_escaped} (d) and $q_3 > 0$ in the center of well. 
\begin{figure}[!htb]
  \begin{center}
    \begin{overpic}[height = 12.5em]{ES_-1_L_100_1e-5.eps}
      \put(-5, 95){(a)}
    \end{overpic}
    \hspace{1em}
    \begin{overpic}[height = 12.5em]{ES_+1_L_100_1e-5.eps}
      \put(-5, 95){(b)}
    \end{overpic}
   \hspace{1em}
    \begin{overpic}[height = 12.5em]{ES_-1_h_2_L_100_W_1e-5_3D_view.eps}
      \put(-5, 100){(c)}
  \end{overpic}
    \hspace{1em}
        \begin{overpic}[height = 12.5em]{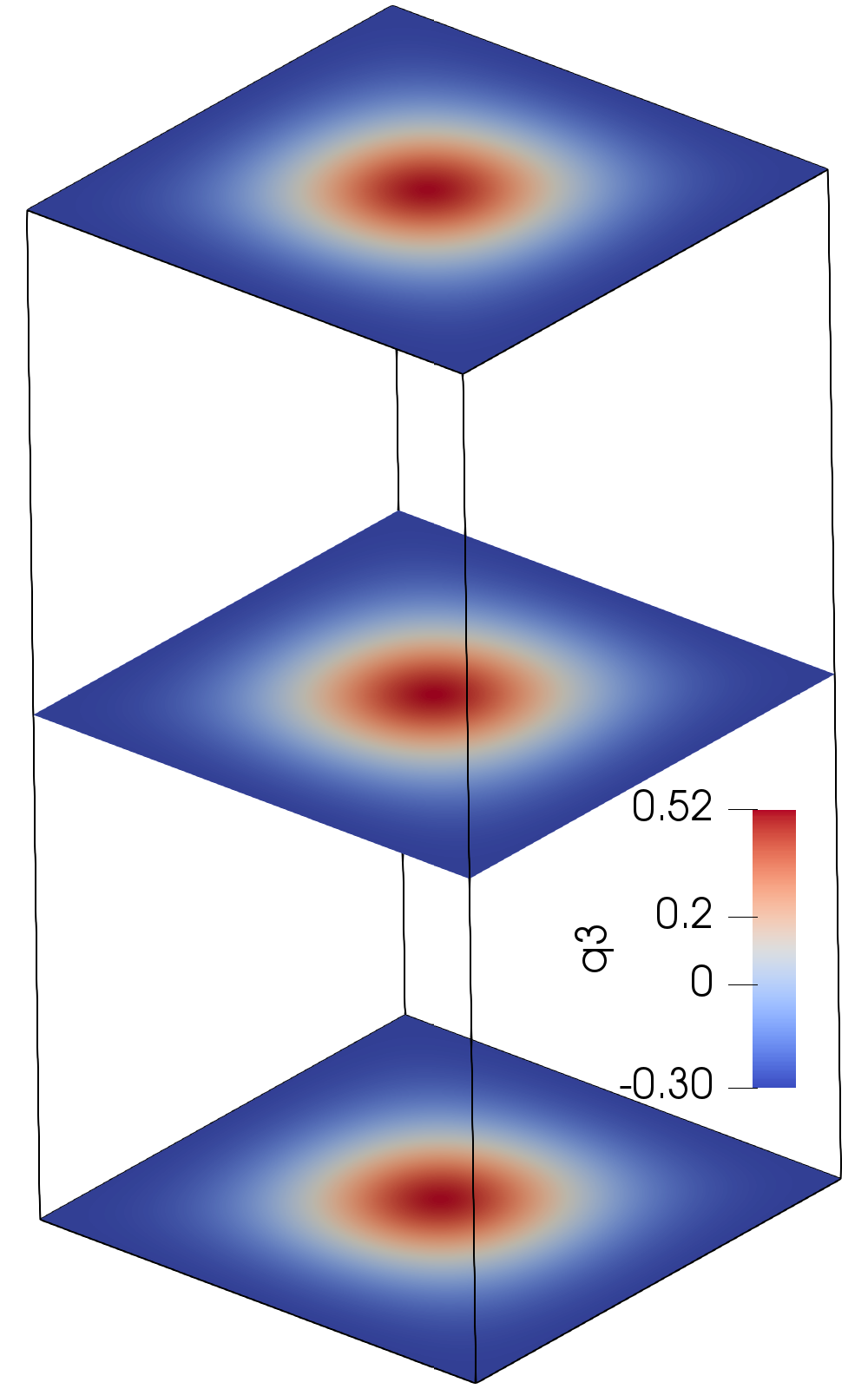}
          \put(-5, 100){(d)}
        \end{overpic}
    \end{center}
  \caption{(a) Middle slice in the escaped configuration with $-1$-disclination line in the center for $\bar{\lambda}^2 = 100$ and $\epsilon = 4$. (b) Middle slice in the escaped configuration with $+1$-disclination line in the center for $\bar{\lambda}^2 = 100$ and $\epsilon = 4$. The colors show the biaxiality parameter $\beta^2$, which are arranged such that the high to low values (one to zero) correspond to variations from red to blue. The white rods indicate the director direction $\nvec$. (c)-(d) 3D view and $q_3$ in the escaped configuration with $-1$-disclination line in the center for $\bar{\lambda}^2 = 100$ and $\epsilon = 4$.  }\label{3D_escaped}
\end{figure}

% For $W_z = 0$, these two configurations are $z$-invariant, as $|\pp_z \Qvec|^2$ is almost zero in numerical solutions. However, by increasing $W_z$, we observe the increase of $|\pp_z \Qvec|^2$.
Strictly speaking, the two configurations in Fig. \ref{3D_escaped} are not z-invariant if $W_z \neq 0$. The escaped solutions cease to exist if either $\epsilon$ is small enough or if the anchoring $W_z$ is large enough.
We can compute the critical achoring strength $W_z$ on the top and bottom plates, for which the escaped configurations cease to exist, as a function of $\epsilon$ for $\bar{\lambda}^2 = 100$, shown in Fig. \ref{ES_Height}. 
\begin{figure}[!h]
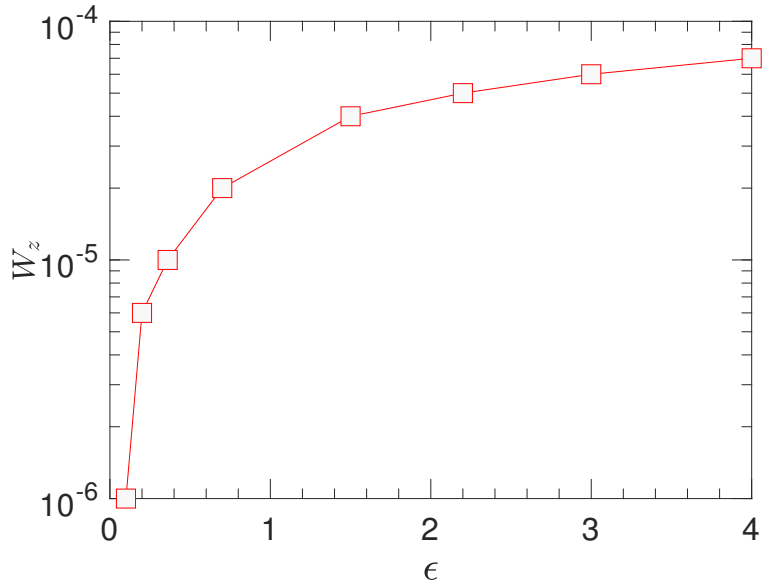

  \centering
  \begin{overpic}[width = 0.6\columnwidth]{ES_Wz_Height.eps}
  \end{overpic}
  \caption{Critical achoring strength on the top and bottom plates, for which the escaped configurations lose their stabilities as a function of $\epsilon$. }\label{ES_Height}
\end{figure}

% for $W_z \leq 10^{-5} \mathrm{Jm}^{-5}$ and arbitary $\epsilon$. These escaped solutions are also z-invariant. } 

\section{Summary}
\label{sec:summary}

In a batch of papers \cite{kralj2014order}, \cite{canevari2017order} and \cite{wang2018order}, the authors study WORS-type solutions or critical points of the LdG free energy on square domains, and WORS-type solutions have a constant eigenframe with a distinct diagonal defect line connecting the four square vertices. It is natural to ask if WORS-type solutions are relevant for 3D domains or if they are a 2D-artefact. Our essential finding in this paper is to show that the WORS is a LdG critical point for 3D wells with a square cross-section and experimentally relevant tangent boundary conditions on the lateral surfaces, for arbitrary well height, with both natural boundary conditions and realistic surface energies on the top and bottom surfaces. In fact, for sufficiently small $\lambda$ - the size of the square cross-section, the WORS is the global LdG minimizer for these 3D problems, exemplifying the 3D relevance of WORS-type solutions for all temperatures below the nematic supercooling temperature. 

We also numerically demonstrate the existence of stable mixed 3D solutions with two different diagonal profiles on the top and bottom well surfaces, for wells with sufficiently large $\epsilon$ and $\lambda$. These are again interesting from an applications point of view and are 3D solutions that are not covered by a purely 2D study. It is interesting to see that whilst the BD solution is an unstable LdG critical point on a 2D square domain, it interpolates between the two distinct diagonal profiles for a stable mixed 3D solution. Further work will be based on a study of truly 3D solutions that are not $z$-invariant and if they can related to the 2D solutions on squares reported in previous work i.e. can we use the zoo of 2D LdG critical points on a square domain reported in \cite{robinson2017molecular, wang2018order} to construct exotic 3D solutions on a 3D square well? This will be of substantial mathematical and applications-oriented interest.

\section{Acknowledgements}

G.C.'s  research  was supported by the Basque Government through the BERC 2018-2021 program; by the Spanish Ministry of Science, Innovation and Universities: BCAM Severo Ochoa accreditation SEV-2017-0718; and by the Spanish Ministry of Economy and Competitiveness: MTM2017-82184-R. A.M. was supported by fellowships EP/J001686/1 and EP/J001686/2, is supported by an OCIAM Visiting Fellowship and the Keble Advanced Studies Centre. Y.W. would like to thank Professor Chun Liu, for his constant support and helpful advice.

\bibliographystyle{plain}
\bibliography{LC}

% \end{thebibliography}

%% else use the following coding to input the bibitems directly in the
%% TeX file.

% \begin{thebibliography}{00}

%% \bibitem{label}
%% Text of bibliographic item

% \bibitem{}

%\end{thebibliography}
\end{document}